\numberwithin{equation}{section}
\definecolor{VerdeOlivo}{rgb}{0.3,0.5,0.1}
\definecolor{Magenta}{rgb}{.65,0.15,.2}
\definecolor{Gris}{gray}{0.3}
\newtheorem{Theorem}{Theorem}[section] 
\newtheorem{Definition}[Theorem]{Definition}
\newtheorem{Proposition}[Theorem]{Proposition}  
\newtheorem{Lemma}[Theorem]{Lemma} 
\newtheorem{Corollary}[Theorem]{Corollary}
\newtheorem{Remark}[Theorem]{Remark}
\newtheorem{Example}[Theorem]{Example}
\newtheorem{Conjecture}[Theorem]{Conjecture}
\theoremstyle{definition}
\begin{document} 


\title[Critical ideals of signed graphs with twin vertices]{Critical ideals of signed graphs with twin vertices}


\author{Carlos A. Alfaro}
\address{
Banco de M\'exico\\
Calzada Legaria 691, m\'odulo IV\\
Col. Irrigaci\'on\\
11500 Ciudad de M\'exico, M\'exico.}
\email[Carlos A. Alfaro]{alfaromontufar@gmail.com}
\author{Hugo Corrales}
\address{
Escuela Superior de Economia\\
Plan de Agua Prieta No. 66\\
Col. Plutarco El\'ias Calles\\
11340 Ciudad de M\'exico, M\'exico.
} 
\email[Hugo~Corrales]{hhcorrales@gmail.com}
\author{Carlos E. Valencia}
\address{
Departamento de Matem\'aticas\\
Centro de Investigaci\'on y de Estudios Avanzados del IPN\\
Apartado Postal 14--740\\
07000 Ciudad de M\'exico, M\'exico.
} 

\email[Carlos E.~Valencia\footnote{Corresponding author}]{cvalencia@math.cinvestav.edu.mx, cvalencia75@gmail.com}
\thanks{The authors were partially supported by SNI}

\keywords{Critical ideals, Algebraic co-rank, Twin vertices, Replication, Duplication.} 
\subjclass[2010]{Primary 13F20; Secondary 13P10, 05C50, 05E99.}

\begin{abstract} 
This paper studies critical ideals of graphs with twin vertices, which are vertices with the same neighbors.
A pair of such vertices are called replicated if they are adjacent, and duplicated, otherwise.
Critical ideals of graphs having twin vertices have good properties and show regular patterns. 
Given a graph $G=(V,E)$ and ${\bf d}\in \mathbb{Z}^{|V|}$, let $G^{\bf d}$ be the graph obtained from $G$ by duplicating ${\bf d}_v$ times or replicating $-{\bf d}_v$ times the vertex $v$ when ${\bf d}_v>0$ or ${\bf d}_v<0$, respectively.  
Moreover, given $\delta\in \{0,1,-1\}^{|V|}$, let 
\[
\mathcal{T}_{\delta}(G)=\{G^{\bf d}: {\bf d}\in \mathbb{Z}^{|V|} \text{ such that } {\bf d}_v=0 \text{ if and only if }\delta_v=0 \text{ and } {\bf d}_v\delta_v>0 \text{ otherwise}\}
\]
be the set of graphs sharing the same pattern of duplication or replication of vertices.
More than one half of the critical ideals of a graph in $\mathcal{T}_{\delta}(G)$ can be determined by the critical ideals of $G$.
The algebraic co-rank of a graph $G$ is the maximum integer $i$ such that the $i$-{\it th} critical ideal of $G$ is trivial. 
We show that the algebraic co-rank of any graph in $\mathcal{T}_{\delta}(G)$ is equal to the algebraic co-rank of $G^{\delta}$. 
Moreover, the algebraic co-rank can be determined by a simple evaluation of the critical ideals of $G$.
For a large enough ${\bf d}\in \mathbb{Z}^{V(G)}$, we show that the critical ideals of $G^{\bf d}$ have similar behavior to the critical ideals 
of the disjoint union of $G$ and
some set $\{K_{n_v}\}_{\{v\in V(G) \, |\, {\bf d}_v<0\}}$ of complete graphs and some set $\{T_{n_v}\}_{\{v\in V(G) \, |\, {\bf d}_v>0\}}$ of trivial graphs. 
Additionally, we pose important conjectures on the distribution of the algebraic co-rank of the graphs with twins vertices.
These conjectures imply that twin-free graphs have a large algebraic co-rank, meanwhile a graph having small algebraic co-rank has at least one pair of twin vertices.
\end{abstract}

\maketitle


\section{Introduction and background} 
A signed multidigraph $G_\sigma$ is a pair that consists of a multidigraph $G$ (a digraph possibly with multiple arcs) and a function $\sigma$, 
called the sign, from the arcs of $G$ into the set $\{1,-1\}$.
Along the paper, all digraphs are allowed to have multiple signed arcs; when digraphs have neither multiple nor signed arcs, then we refer them as graphs.
Given a set of variables $X_G=\{x_u\, :\, u\in V(G)\}$ indexed by the vertices of $G$ and a principal ideal domain (PID) $\mathcal{P}$,
the generalized Laplacian matrix $L(G_\sigma,X_G)$ of $G_\sigma$ is the matrix whose entries are given by
\[
L(G_\sigma,X_G)_{uv}=\begin{cases}
x_u& \text{ if } u=v,\\
-\sigma(uv) m_{uv} 1_{\mathcal{P}}& \text{ otherwise},
\end{cases}
\]
where $m_{uv}$ is the number of arcs leaving $u$ and entering to $v$, and $1_{\mathcal{P}}$ is the identity of $\mathcal{P}$.
Moreover, if $\mathcal{P}[X_G]$ is the polynomial ring over $\mathcal{P}$ in the variables $X_G$, then the critical ideals of $G_\sigma$ are the determinantal ideals given by
\[
I_i(G_\sigma,X_G)=\langle \{ {\rm det} (m) \, : \, m \text{ is an }i\times i \text{ submatrix of }L(G_\sigma,X_G)\}\rangle\subseteq \mathcal{P}[X_G],
\]
for all $1\leq i\leq |V(G)|$.
We say that a critical ideal is trivial when it is equal to $\langle1_{\mathcal{P}}\rangle$.
For simplicity. we write $I_i(G_\sigma,X)$ instead of $I_i(G_\sigma,X_G)$.

\begin{Definition}
The algebraic co-rank $\gamma_\mathcal{P}(G_\sigma)$ of $G_\sigma$ is the maximum integer $i$ such that $I_i(G_\sigma,X)$ is trivial.
\end{Definition}

Since $I_n(G_\sigma,X)=\langle {\rm det} (L(G_\sigma,X))\rangle\neq \langle 1\rangle$, $\gamma_\mathcal{P}(G_\sigma)\leq n-1$.
The algebraic co-rank of a graph is closely related to combinatorial properties of the graph.
For instance, if $H_\sigma$ is an induced subgraph of $G_\sigma$, 
then $I_i(H_\sigma,X)\subseteq I_i(G_\sigma,X)$ for all $1\leq i\leq |V(H)|$ (see~\cite[Proposition 3.3]{critical}).
Therefore, $\gamma(H_\sigma)\leq\gamma(G_\sigma)$. 
Also, if $\alpha(G)$ and $\omega(G)$ denote the stability number and the clique number of $G$, respectively, then
\[
\gamma_\mathcal{P}(G)\leq 2(n-\omega(G))+1\text{ and } \gamma_\mathcal{P}(G)\leq 2(n-\alpha(G)),
\]
see \cite[Theorem 3.13]{critical}.

We now introduce the operations of duplication and replication of vertices.
Given a multidigraph $G$ and a vertex $v\in V(G)$, duplicating the vertex $v$ consists in adding a new vertex $v^1$ to $V(G)$ and making it adjacent to each neighbor of $v$, respecting the multiplicities and signs of arcs.
Let $d(G, v)$ denote the multidigraph obtained from $G$ after duplicating the vertex $v$.
Similarly, replicating the vertex $v$ consists in duplicating $v$ and adding the arcs $vv^1$ and $v^1v$.
Let $r(G, v)$ denote the multidigraph obtained from $G$ by replicating the vertex $v$.
Two vertices $u$ and $v$ are called twins if they have the same neighborhood.
In the literature, duplicated vertices are also known as false twins, and replicated vertices are also named true twins.
Let $d^k(G,v)$ denote the multidigraph obtained from $G$ by duplicating the vertex $v$ a total of $k$ times and similarly for $r^k(G,v)$.

Given ${\bf d}\in \mathbb{Z}^{|V|}$, let $G^{\bf d}$ be the graph obtained from $G$ by duplicating the vertex $v$ ${\bf d}_v$ times if ${\bf d}_v>0$, 
and replicating $v$ $-{\bf d}_v$ times if ${\bf d}_v<0$, for each $v\in V(G)$. 
Note that $G=G^{\bf 0}$.
Let $V(G^{\bf d},v)$ denote the vertex set $\{v,v^1,\ldots,v^{|{\bf d}_v|}\}$ created by either duplicating or replicating the vertex $v$.
To simplify the notation, the vertex $v$ will be also denoted by $v^0$.
The following example illustrates these concepts.
\begin{Example}
Let $C_4$ be the cycle with four vertices and ${\bf d}=(-1,1,1,1)$.
Thus $C_4^{\bf d}$ is the graph with eight vertices shown in Figure~\ref{fig:01}.b.
\begin{figure}[h]
\begin{center}
\begin{tabular}{c@{\extracolsep{20mm}}c}
\begin{tikzpicture}[line width=1pt, scale=0.9]
	\tikzstyle{every node}=[inner sep=0pt, minimum width=4.5pt] 
	\draw (225:0.8) node (v1) [draw, circle, fill=gray] {};
	\draw (315:0.8) node (v2) [draw, circle, fill=gray] {};
	\draw (45:0.8) node (v3) [draw, circle, fill=gray] {};
	\draw (135:0.8) node (v4) [draw, circle, fill=gray] {};
	\draw (225:1.1) node () {\small $a$};
	\draw (315:1.1) node () {\small $b$};
	\draw (45:1.1) node () {\small $c$};
	\draw (135:1.1) node () {\small $d$};
	\draw (v1) edge (v2);
	\draw (v2) edge (v3);
	\draw (v3) edge (v4);
	\draw (v4) -- (v1); 
\end{tikzpicture}
&
\begin{tikzpicture}[line width=1pt, scale=0.9]
	\tikzstyle{every node}=[inner sep=0pt, minimum width=4pt] 
	\draw (225:0.8) node (v1) [draw, circle, fill=gray] {};
	\draw (315:0.8) node (v2) [draw, circle, fill=gray] {};
	\draw (45:0.8) node (v3) [draw, circle, fill=gray] {};
	\draw (135:0.8) node (v4) [draw, circle, fill=gray] {};
	\draw (225:1.2) node (v1p) [draw, circle, fill=gray] {};
	\draw (315:1.2) node (v2p) [draw, circle, fill=gray] {};
	\draw (45:1.2) node (v3p) [draw, circle, fill=gray] {};
	\draw (135:1.2) node (v4p) [draw, circle, fill=gray] {};
	\draw (225:1.9) node () {\small $V(C_4^{\bf d}, a)$};
	\draw (315:1.9) node () {\small $V(C_4^{\bf d}, b)$};
	\draw (45:1.9) node () {\small $V(C_4^{\bf d}, c)$};
	\draw (135:1.9) node () {\small $V(C_4^{\bf d}, d)$};
	\draw (225:0.55) node () {\small $a$};
	\draw (315:0.55) node () {\small $b$};
	\draw (45:0.55) node () {\small $c$};
	\draw (135:0.55) node () {\small $d$};
	\draw (215:1.35) node () {\small $a^1$};
	\draw (325:1.4) node () {\small $b^1$};
	\draw (35:1.4) node () {\small $c^1$};
	\draw (145:1.35) node () {\small $d^1$};
	\draw (v1) edge (v2);
	\draw (v2) edge (v3);
	\draw (v3) edge (v4);
	\draw (v4) -- (v1);
	\draw (v1p) edge (v2p);
	\draw (v1p) edge (v2);
	\draw (v1) edge (v2p);
	\draw (v2p) edge (v3p);
	\draw (v2p) edge (v3);
	\draw (v2) edge (v3p);
	\draw (v3p) edge (v4p);
	\draw (v3p) edge (v4);
	\draw (v3) edge (v4p);
	\draw (v4p) -- (v1p);
	\draw (v4p) -- (v1);
	\draw (v4) -- (v1p); 
	\draw (v1p) edge (v1);
\end{tikzpicture}
\\
$(a)$
&
$(b)$
\end{tabular}		
\end{center}
\caption{The cycle with four vertices and $C_4^{(-1,1,1,1)}$.}
\label{fig:01}
\end{figure}
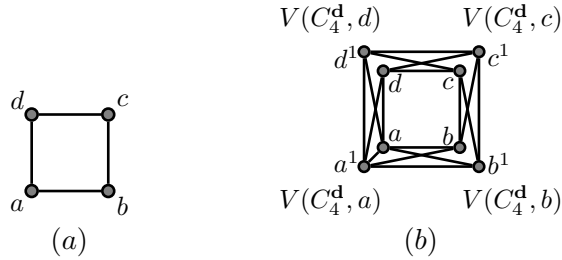
\end{Example}

Critical ideals were defined in \cite{critical} as a refinement of the critical group of a graph.
We now introduce the critical group of a multidigraph. 
The Laplacian matrix $L(G_\sigma)$ of $G_\sigma$ is the evaluation of $L(G_\sigma,X)$ at $X=D_G$, where $D_G$ is the out-degree vector of $G$.
By considering $L(G_\sigma)$ as a linear map $L(G_\sigma):\mathbb{Z}^V\rightarrow \mathbb{Z}^V$, the cokernel of $L(G_\sigma)$ 
is the quotient module $\mathbb{Z}^{V}/{\rm Im}\, L(G_\sigma)$. 
The torsion part of this module is the critical group $K(G_\sigma)$ of $G_\sigma$.
The critical group has been studied intensively on several contexts over the last 30 years, such that: the {\it group of components} \cite{lorenzini1991,lorenzini2008}, 
the {\it Picard group} \cite{bhn,biggs1999}, the {\it Jacobian group} \cite{bhn,biggs1999}, the {\it sandpile group} \cite{cone,cori},  {\it chip-firing game} \cite{biggs1999,merino}, 
or {\it Laplacian unimodular equivalence} \cite{gmw,merris}.
Recently, the critical ideals have played an important role in understanding and classifying the graphs whose 
critical group has $i$ invariant factors equal to one, see~\cite{g2,g3}.
In general, the relations between the critical group and other parameters of a graph G remain unknown.  

There are few natural constructions of graphs which behave well with respect to the critical group.
For example, the critical group $K(G\sqcup H)$ of a disjoint union $G\sqcup H$ of two graphs $G$ and $H$ is isomorphic to $K(G)\oplus K(H)$.
Moreover, in \cite{wagner} it was proved that if the graphic matroids of $G$ and $H$ are isomorphic, then their critical groups are isomorphic.
This was proved by studying the operations of {\it splittings} or {\it mergings of one-vertex cuts} and {\it twistings of two-vertex cuts}.
Other operations on graphs have been explored, such as the {\it cone of a graph} \cite{cone}, the {\it line graph} \cite{bmmpr, levine}, and the {\it clique-inserted graph} \cite{cz}.

The main goal of this article is to give a description of the critical ideals of signed multidigraphs with twin vertices.
More precisely, given a graph $G$ and $\delta\in \{0,1,-1\}^{|V|}$, let 
\[
\mathcal{T}_{\delta}(G)=\{G^{\bf d}: {\bf d}\in \mathbb{Z}^{|V|} \text{ such that } {\rm supp}({\bf d})=\delta\},
\]
where 
\[
{\rm supp}({\bf d})_v=
\begin{cases}
	-1 & \text{ if } {\bf d}_v < 0,\\
	1 & \text{ if } {\bf d}_v > 0,\\
	0 & \text{ otherwise.}
\end{cases}
\]
We prove that more than one half of the critical ideals of the graphs in $\mathcal{T}_{\delta}(G)$ 
are determined by the critical ideals of $G$, see Theorems~\ref{teo:deq} and \ref{teo:req}.
Moreover, the algebraic co-rank of any graph in $\mathcal{T}_{\delta}(G)$ is equal to the algebraic co-rank of $G^{\delta}$ (see Corollary~\ref{coro:bound}),
which is less than or equal to  the number of vertices of $G$ and is determined by a simple evaluation of the critical ideals of $G$.

We illustrate these results by presenting a simple example.
Consider the path $P_3$ with three vertices.
Then $\gamma_{\mathcal{P}}(P_3)=2$ and 
\[
I_3(P_3, X)=\langle x_1x_2x_3-x_1-x_3\rangle=\langle p\rangle.
\]
Our goal is to describe the critical ideals of the graphs obtained by duplicating or replicating some of the vertices of $P_3$
and in particular we are interested in its algebraic co-rank.
For our example we want to calculate the algebraic co-rank of the graphs in one of the following families:
\[
\mathcal{T}_{(-1,-1,-1)}(P_3), \mathcal{T}_{(-1,-1,1)}(P_3), \mathcal{T}_{(-1,1,1)}(P_3), \text{ and }\mathcal{T}_{(-1,1,-1)}(P_3).
\]
Since any graph in one of these families contains $P_3$ as an induced subgraph, its algebraic co-rank is greater than or equal to two.
Theorem~\ref{teo:rd} and Corollary~\ref{coro:bound} imply that the algebraic co-rank of any of these graphs is less than or equal to  three, the number of vertices of $P_3$.
Moreover, all the graphs in each of the families have the same algebraic co-rank and this can be computed by evaluating its third critical ideal.
For instance, the algebraic co-rank of any graph in $\mathcal{T}_{(-1,-1,-1)}(P_3)$ is equal to three because 
\[
p(-1,-1,-1)=(-1)(-1)(-1)-(-1)-(-1)=-1+1+1=1.
\]
A similar argument applies to $\mathcal{T}_{(-1,-1,1)}(P_3)$ and $\mathcal{T}_{(-1,1,1)}(P_3)$.
The case of $\mathcal{T}_{(-1,1,-1)}(P_3)$ is more interesting. 
Since $p(-1,1,-1)=3$, the algebraic co-rank depends on the base ring $\mathcal{P}$.
For instance, if $\mathcal{P}=\mathbb{Z}$, then the algebraic co-rank of any graph in $\mathcal{T}_{(-1,1,-1)}(P_3)$ is two.
However, if $\mathcal{P}$ is a finite field of characteristic different to three, then the algebraic co-rank of any graph in $\mathcal{T}_{(-1,1,-1)}(P_3)$ is three. 

Obtaining the description of the critical ideals of the graphs in a family $\mathcal{T}_{\delta}(G)$ is a difficult task.
However, we can obtain information of more than one half of the critical ideals of the graphs in $\mathcal{T}_{\delta}(G)$, see Remark~\ref{half}.
In Section~\ref{Sbipartite}, the reader will find a description of some of the critical ideals of $\mathcal{T}_{(1,1)}(P_2)$ computed by using results contained in this article.
More precisely, while the vertices are duplicated or replicated several times, the initial critical ideals behave similarly to the critical ideals of the disjoint union of complete and trivial graphs.

These results are important in the study of critical ideals of graphs, in particular, in the understanding of the algebraic co-rank of a graph.
For instance, in the classification of the graphs with algebraic co-rank less than or equal to $k$,
these results allow us to get some insights of the minimal $k$-forbidden graphs, which help in defining the $k$-basic signed graphs. 
It is important to note that there are several important families of graphs in $\bigcup_{(G, \delta)\in \mathcal{G}} \mathcal{T}_{\delta}(G)$ for some set $\mathcal{G}$ of pairs $(G, \delta)$.
For instance, the complete multipartite graphs are equal to $\bigcup_{i=1}^{\infty} \mathcal{T}_{{\bf 1}_i}(K_i)$, 
where $K_i$ is the complete graph with $i$ vertices and ${{\bf 1}_i}$ is the vector of size $i$ where all their entries equal to $1$.
Threshold graphs and quasi-threshold graphs can be described in a similar way.
Moreover, cographs and distance-hereditary graphs are families of graphs with twin vertices. 

The article is structured as follows.
In Section~\ref{rd}, we obtain relations between evaluations of the critical ideals of a signed multidigraph $G$ 
and the critical ideals of the graphs obtained by duplicating or replicating a number of vertices.
Then, we get a partial description of the critical ideals of the graph $G^{\bf d}$ for some ${\bf d}\in \mathbb{Z}^{V(G)}$.
As a consequence, we get an upper bound for the algebraic co-rank of graphs with twins.
To finish this section, we pose conjectures which lead into a wide and interesting outlook of the algebraic co-rank of graphs. 
In Section~\ref{sec:description}, we give precise descriptions of the critical ideals of the $k$-{\it th} duplication and $k$-{\it th} 
replication of vertex $v$ in terms of the critical ideals of $G$.
Finally we present some applications of our results.


\section{An upper bound for the algebraic co-rank of graphs with twins}\label{rd}
The objective of this section is to study critical ideals of graphs with twin vertices.
We begin this section by calculating the minors (which are almost always equal to zero) of the union of matrices in Lemma~\ref{lema:det1}.
By using this lemma, we get a first description for the critical ideals of the graph 
obtained by duplicating or replicating vertices (see Lemma~\ref{lema:d} and Theorem~\ref{teo:rd}).
Then, we get an upper bound for the algebraic co-rank of a graph with twins (see Corollary~\ref{coro:bound}). 
In fact, this bound is tight since the equality holds for the complete graphs (see Example~\ref{example:completa}).
This upper bound can be used in the classification of the graphs that have algebraic co-rank less than or equal to  an integer $k$, see~\cite{g2} and \cite{g3}.

Let $\mathcal P$ be a commutative ring with identity, and let $M_n (\mathcal{P})$ denote the set of $n\times n$ matrices with entries on $\mathcal{P}$. 
Given two vectors ${\bf a}\in \mathcal{P}^{q_1}$ and ${\bf b}\in \mathcal{P}^{q_2}$ and two matrices $P\in M_{p_1\times p_2}(\mathcal{P})$ and 
$Q\in M_{q_1\times q_2}(\mathcal{P})$ such that $p_1+q_1=p_2+q_2$, the join $J(P,{\bf a};Q,{\bf b})$ is the matrix
\[
\left[\begin{array}{cc}
                   P                        &    {\bf 1}_{p_1}^T{\bf b}\\
{\bf a}^T {\bf 1}_{p_2}   &        Q\\
\end{array}\right]
\in M_{p_1+q_1}(\mathcal{P}).
\]
Note that if $G\boxtimes H$ denotes the join of two graphs $G$ and $H$, then 
\[
L(G\boxtimes H, X)=J(L(G,X), -{\bf 1}; L(H,X), -{\bf 1}).
\]

\begin{figure}[h!]
\begin{tabular}{c@{\extracolsep{2cm}}c@{\extracolsep{2cm}}c}
\begin{tikzpicture}[scale=1, line width=0.7pt]
\tikzstyle{every node}=[minimum width=4.5pt, inner sep=0pt, circle]
\draw (0,1) node (v1) [draw, fill=gray, label=above:{\small $v_1$}] {};
\draw (0,-1) node (v2) [draw, fill=gray, label=below:{\small $v_2$}] {};
\draw (v1) edge (v2);
\end{tikzpicture}
&
\begin{tikzpicture}[scale=1, line width=0.7pt]
\tikzstyle{every node}=[minimum width=4.5pt, inner sep=0pt, circle]
\draw (0,1) node (u1) [draw, fill=gray, label=above:{\small $u_1$}] {};
\draw (0,0) node (u2) [draw, fill=gray, label=right:{\small $u_2$}] {};
\draw (0,-1) node (u3) [draw, fill=gray, label=below:{\small $u_2$}] {};
\draw (u1) -- (u2) -- (u3);
\end{tikzpicture}
&
\begin{tikzpicture}[scale=1, line width=0.7pt]
\tikzstyle{every node}=[minimum width=4.5pt, inner sep=0pt, circle]
\draw (-1,1) node (v1) [draw, fill=gray, label=above:{\small $v_1$}] {};
\draw (-1,-1) node (v2) [draw, fill=gray, label=below:{\small $v_2$}] {};
\draw (v1) edge (v2);
\draw (1,1) node (u1) [draw, fill=gray, label=above:{\small $u_1$}] {};
\draw (1,0) node (u2) [draw, fill=gray, label=right:{\small $u_2$}] {};
\draw (1,-1) node (u3) [draw, fill=gray, label=below:{\small $u_3$}] {};
\draw (u1) -- (u2) -- (u3);
\draw (v1) -- (u1) -- (v2) -- (u2) -- (v1) -- (u3) -- (v2) -- (u3) -- (v1);
\end{tikzpicture}
\\
$P_2$ & $P_3$ & $P_2\boxtimes P_3$
\end{tabular}
\caption{The join of two paths.}
\label{fig:JoinTwoPaths}
\end{figure}
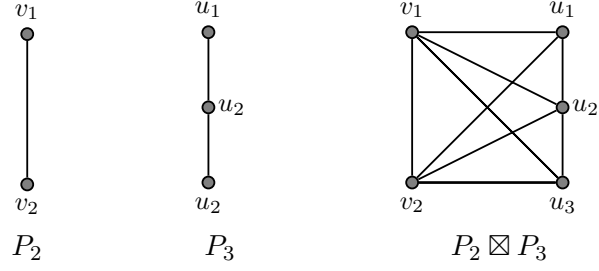
	
\begin{Example}
Consider the join of a path $P_2$ with $2$ vertices and a path $P_3$ with $3$ vertices, see Fig.~\ref{fig:JoinTwoPaths}.
Then,
\begin{eqnarray*}
L(P_2\boxtimes P_3,X_{P_2\boxtimes P_3})&=&J(L(P_2,X_{P_2}),{\bf -1};L(P_3,X_{P_3}),{\bf -1})\\
&=&
\left[
\begin{array}{cc|ccc}
x_{v_1} & -1 & -1 & -1 & -1 \\
-1 & x_{v_2} & -1 & -1 & -1 \\
\hline
-1 & -1 & x_{u_1} &-1 & 0 \\
-1 & -1 & -1 & x_{u_2} &-1 \\
-1 & -1 & 0 & -1 & x_{u_3} \\
\end{array}
\right].
\end{eqnarray*}	
\end{Example}

The following lemma describes the determinant of the join $J(P,{\bf a}; Q,{\bf b})$.

\begin{Lemma}\label{lema:det1}
If $P\in M_{p_1\times p_2}(\mathcal{P})$, $Q\in M_{q_1\times q_2}(\mathcal{P})$ with $p_1+q_1=p_2+q_2$, 
${\bf a}\in \mathcal{P}^{q_1}$, and ${\bf b}\in \mathcal{P}^{q_2}$, then
\[
{\rm det}(J(P,{\bf a}; Q,{\bf b}))=
\begin{cases}
{\rm det}(P)\cdot {\rm det}(Q)-
{\rm det}
\left[\begin{array}{cc}
P&{\bf 1}^T\\
{\bf 1}&0
\end{array}\right] 
\cdot {\rm det}
\left[\begin{array}{cc}
0&{\bf b}\\
{\bf a}^T&Q
\end{array}\right]
& \text{ if } p_1=p_2,\\
\\
{\rm det}
\left[\begin{array}{cc}
P & {\bf 1}^T
\end{array}\right]
\cdot
{\rm det}
\left[\begin{array}{c}
{\bf b}\\
Q
\end{array}\right]
& \text{ if } p_1=p_2+1,\\
\\
{\rm det}
\left[\begin{array}{c}
P\\
{\bf 1}
\end{array}\right]
\cdot
{\rm det}
\left[\begin{array}{cc}
{\bf a}^T &Q
\end{array}\right]
& \text{ if } p_2=p_1+1,\\
0 & \text{ otherwise.}
\end{cases}
\] 
\end{Lemma}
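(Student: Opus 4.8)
The plan is to compute $\det(J(P,{\bf a};Q,{\bf b}))$ by the generalized Laplace expansion along the first $p_1$ rows, namely the rows occupied by $P$, and to exploit that both off-diagonal blocks have rank at most one. Write $J=J(P,{\bf a};Q,{\bf b})$ and $S=\{1,\dots,p_1\}$, and recall the expansion
\[
\det(J)=\sum_{|T|=p_1}(-1)^{\sigma(S,T)}\det(J[S,T])\,\det(J[S^{c},T^{c}]),
\]
where $T$ ranges over the $p_1$-subsets of columns, $J[S,T]$ denotes the submatrix on rows $S$ and columns $T$, and $\sigma(S,T)=\sum_{i\in S}i+\sum_{t\in T}t$. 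Partition the columns into the $p_2$ ``$P$-columns'' and the $q_2$ ``$Q$-columns'', and suppose $T$ contains exactly $j$ of the $P$-columns.

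First I would show that almost all terms vanish. In $J[S,T]$ every $Q$-column, restricted to the rows $S$, equals a scalar multiple of ${\bf 1}_{p_1}$, because the top-right block of $J$ is $\mathbf 1_{p_1}{\bf b}^{T}$; hence two or more such columns force $\det(J[S,T])=0$, so a surviving term requires $p_1-j\le 1$. Dually, in $J[S^{c},T^{c}]$ every $P$-column, restricted to $S^{c}$, equals ${\bf a}$, since the bottom-left block is ${\bf a}\,\mathbf 1_{p_2}^{T}$; thus $p_2-j\le 1$. Therefore only $j\ge\max(p_1,p_2)-1$ contribute, while $j\le\min(p_1,p_2)$ always. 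When $|p_1-p_2|\ge 2$ this range is empty, which already yields the last case $\det(J)=0$.

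Next I would identify the surviving terms in the three remaining cases. If $p_1=p_2+1$ (so $q_2=q_1+1$), only $j=p_2$ occurs: $T$ is all $P$-columns together with one $Q$-column, say the $k$-th, so $J[S,T]=[\,P\mid b_k\mathbf 1\,]$ while $J[S^{c},T^{c}]$ is $Q$ with its $k$-th column deleted. By linearity in the last column $\det(J[S,T])=b_k\det[\,P\ {\bf 1}^{T}\,]$, and summing $b_k\det(Q_{\hat k})$ over $k$ with the Laplace signs reconstitutes the cofactor expansion of $\det\big[\begin{smallmatrix}{\bf b}\\ Q\end{smallmatrix}\big]$ along its top row; pulling out the constant factor $\det[\,P\ {\bf 1}^{T}\,]$ gives the claimed product. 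The case $p_2=p_1+1$ is the transpose of this argument. Finally, if $p_1=p_2=p$, the values $j\in\{p-1,p\}$ survive: $j=p$ contributes the single term $\det(P)\det(Q)$ with sign $+1$, while $j=p-1$ gives a double sum over the deleted $P$-column $l$ and the chosen $Q$-column $k$. Here $\det(J[S,T])=b_k\det[\,P_{\hat l}\mid{\bf 1}\,]$ factors through $l$ and $k$ separately, $\det(J[S^{c},T^{c}])=\det[\,{\bf a}\mid Q_{\hat k}\,]$ depends only on $k$, and the Laplace sign splits as a constant times $(-1)^{l}(-1)^{k}$. Hence the double sum factors as a product of two single sums, each being the cofactor expansion of one of the bordered determinants $\det\big[\begin{smallmatrix}P&{\bf 1}^{T}\\ {\bf 1}&0\end{smallmatrix}\big]$ and $\det\big[\begin{smallmatrix}0&{\bf b}\\ {\bf a}^{T}&Q\end{smallmatrix}\big]$ appearing in the statement.

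The one genuinely delicate point, which I expect to be the main obstacle, is the sign bookkeeping: one must check that the accumulated Laplace signs $(-1)^{\sigma(S,T)}$, the signs produced when the appended all-ones and ${\bf a}$ columns are moved into border position, and the cofactor signs in the two bordered expansions combine to give exactly the coefficient $-1$ in front of the product when $p_1=p_2$, and $+1$ in the two unequal cases. Once the parities are tracked carefully, the four cases fall out precisely as stated.
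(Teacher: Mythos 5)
Your proof is correct, but it takes a genuinely different route from the paper's. The paper handles this lemma by induction on $p_1+p_2$ (the inductive step, a cofactor expansion peeling off one row or column of $P$, is left to the reader), with the only stated care going to degenerate base cases such as $P\in M_{1\times 0}(\mathcal{P})$, where the convention $\left[\begin{array}{cc} P & {\bf 1}\end{array}\right]=[1]$ is needed. You instead expand once and for all in complementary minors along the first $p_1$ rows and exploit that both off-diagonal blocks have all columns proportional to a single vector, so any term with two $Q$-columns in the top minor or two $P$-columns in the bottom minor vanishes by multilinearity; this forces $\max(p_1,p_2)-1\le j\le \min(p_1,p_2)$ and yields all four cases uniformly, making the vanishing for $|p_1-p_2|\ge 2$ transparent rather than an artifact of induction. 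The sign bookkeeping you flagged as the delicate point does close: for $p_1=p_2=p$, the term $j=p$ carries sign $(-1)^{p(p+1)}=+1$; for $j=p-1$ the Laplace sign $(-1)^{p+l+k}$ splits into single sums equal to $(-1)^{p+1}\det\left[\begin{smallmatrix}P&{\bf 1}^T\\ {\bf 1}&0\end{smallmatrix}\right]$ and $\det\left[\begin{smallmatrix}0&{\bf b}\\ {\bf a}^T&Q\end{smallmatrix}\right]$, and the prefactor $(-1)^p\cdot(-1)^{p+1}=-1$ is exactly the minus sign in the statement; for $p_1=p_2+1$ the accumulated exponent is $(p_2+1)(p_2+2)$, which is even, giving the coefficient $+1$. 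Two points worth making explicit in a write-up: the complementary-minor expansion and the multilinearity arguments are polynomial identities in the entries, hence valid over the arbitrary commutative ring $\mathcal{P}$ (no rank or field structure is used); and your expansion silently absorbs the degenerate shapes the paper treats by convention, e.g.\ when $p_2=0$ and $p_1=1$ the factor $\det\left[\begin{array}{cc} P & {\bf 1}^T\end{array}\right]$ is the determinant of the $1\times 1$ matrix $[1]$. What each approach buys: the paper's induction is shorter to state but hides the case analysis inside an unwritten inductive step, whereas your argument is longer but self-contained and explains structurally where each of the four formulas comes from.
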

\begin{proof}
The proof follows by induction on $p_1+p_2$.
Note that, if $P\in M_{1\times 0}(\mathcal{P})$, then $[\begin{array}{cc} P & {\bf 1}\end{array}]=[1]$.
Also, if $P\in M_{0\times 1}(\mathcal{P})$, then $[\begin{array}{cc} P & {\bf 1}\end{array}]^T=[1]$.
\end{proof}

Note that all square submatrices of a join of matrices are, in fact, a join of matrices.
Hence, almost all minors of the join of matrices are equal to zero.
This fact will be useful in obtaining a description of the critical ideals of a graph with twin vertices (see Lemmas~\ref{lema:d}, \ref{lema:r}, \ref{lema:gend} and~\ref{lema:genr}).

Given ${\bf a}\in \mathcal{P}^n$, $L\in M_n(\mathcal{P})$ and $1\leq j\leq n$, let ${\rm minors}_j(L, {\bf a})$ denote the set
\[
\left\{\mathrm{det}(M): M\in M_j(\mathcal{P}) \text{ and } M= \left[\begin{array}{c} {\bf a}'\\L'\end{array}\right] \text{ for a submatrix } {\bf a}'\neq \emptyset \text{ of } {\bf a} \text{ and } L'\text{ of } L, \text{ resp.}\right\}.
\]
In a similar way, let ${\rm minors}_j({\bf a}, L)$ be the set of determinants of some submatrices of $\left[\begin{array}{cc}{\bf a}^T& L\end{array}\right]$ of size $j$.
Note that ${\rm minors}_{0}({\bf a}, L)={\rm minors}_{0}(L,{\bf b})=\emptyset$, ${\rm minors}_{1}({\bf a}, L) =\{{\bf a}_i\}_{1\leq i\leq n}$, 
and ${\rm minors}_{1}(L,{\bf b})=\{{\bf b}_i\}_{1\leq i\leq n}$.
Let $M_j(L)$ denote the set of submatrices of $L$ of size $j$.
 
Let $G$ be a signed multidigraph with $n\geq 2$ vertices and $v$ be a vertex of $G$.
It is not difficult to see that
\[
L(G,X)=\left[\begin{array}{cc}x_v& {\bf b} \\ {\bf a}^T & L(G-v,X) \end{array}\right]=J(x_v, {\bf a}; L(G-v,X), {\bf b})
\]
for some ${\bf a}, {\bf b}\in \mathcal{P}^{n-1}$.
The following proposition tell us that the $j$-{\it th} critical ideal of $G$ is generated by four types of minors of $L(G,X)$.
 
\begin{Proposition}\label{eq:g}
If $G$ is a signed multidigraph with $n\geq 2$ vertices and $v$ is a vertex of $G$, then the critical ideal $I_j(G,X)$ of $G$ is equal to
\begin{eqnarray*}
\left\langle{\rm minors}_{j}(L(G-v,X)), {\rm minors}_{j}({\bf a}, L(G-v,X)),{\rm minors}_{j}(L(G-v,X),{\bf b}),\right.\\
\left.\left\{x_v\cdot {\rm det}(M)+{\rm det}( J(0,{\bf a'};M,{\bf b'}))  \, :\, J(x_v,{\bf a'};M,{\bf b'}) \in M_{j}(L(G,X)) \text{ with } {\bf a',b'} \text{ subvectors of } {\bf a,b,}, \text{ resp.}\right\}\right\rangle
\end{eqnarray*}
for all $1\leq j\leq n-1$, and equal to $\left\langle x_v\cdot {\rm det}(L(G-v,X))+{\rm det}(J(0,{\bf a};L(G-v,X),{\bf b}))\right\rangle$ when $j=n$.
\end{Proposition}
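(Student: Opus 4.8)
The plan is to argue directly from the definition of the critical ideal $I_j(G,X)$ as the ideal generated by the determinants of all $j\times j$ submatrices of $L(G,X)$, and to sort these submatrices into four mutually exclusive families according to whether they involve the row and/or the column indexed by $v$. Writing $L(G,X)=J(x_v,{\bf a};L(G-v,X),{\bf b})$ as in the displayed identity preceding the statement, the diagonal entry $x_v$ occupies precisely the intersection of the $v$-th row and the $v$-th column, while ${\bf a}$ is the remainder of the $v$-th column and ${\bf b}$ the remainder of the $v$-th row. Since $x_v$ appears only in position $(v,v)$ and the off-diagonal entries are constants of $\mathcal{P}$, none of $x_v$, ${\bf a}$, ${\bf b}$ shares an index with the others, so the classification by use of the $v$-row and $v$-column is clean.

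First I would dispose of the three families that avoid the entry $x_v$. A $j\times j$ submatrix using neither the row nor the column of $v$ is exactly a $j\times j$ submatrix of $L(G-v,X)$, so its determinant lies in ${\rm minors}_j(L(G-v,X))$. A submatrix using the column of $v$ but not its row is obtained by adjoining a (nonempty) subvector of ${\bf a}$ as the $v$-column to a $j\times(j-1)$ submatrix of $L(G-v,X)$, so its determinant lies in ${\rm minors}_j({\bf a},L(G-v,X))$ by definition. Symmetrically, a submatrix using the row of $v$ but not its column arises by adjoining a subvector of ${\bf b}$ as a row to a $(j-1)\times j$ submatrix of $L(G-v,X)$, and contributes a determinant in ${\rm minors}_j(L(G-v,X),{\bf b})$. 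These three families account for every $j\times j$ submatrix not containing the $(v,v)$-entry, and the required nonemptiness of the adjoined subvectors is automatic once the row or column of $v$ is used.

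The remaining family consists of the submatrices containing the entry $x_v$, hence both the row and the column of $v$; each such matrix has the shape $J(x_v,{\bf a}';M,{\bf b}')$ with $M\in M_{j-1}(L(G-v,X))$ and ${\bf a}',{\bf b}'$ subvectors of ${\bf a},{\bf b}$. Here the key step is to apply Lemma~\ref{lema:det1} in the case $p_1=p_2=1$ with $P=x_v$: since $\det\left[\begin{smallmatrix} x_v & 1 \\ 1 & 0\end{smallmatrix}\right]=-1$, the lemma yields $\det(J(x_v,{\bf a}';M,{\bf b}'))=x_v\det(M)+\det(J(0,{\bf a}';M,{\bf b}'))$, which is precisely the fourth type of generator listed. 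As the four families exhaust all $j\times j$ submatrices of $L(G,X)$, their determinants generate $I_j(G,X)$, giving the claimed description for $1\le j\le n-1$. For $j=n$ the only $n\times n$ submatrix is $L(G,X)$ itself, which falls in the last family with $M=L(G-v,X)$, ${\bf a}'={\bf a}$ and ${\bf b}'={\bf b}$, producing the single generator $x_v\det(L(G-v,X))+\det(J(0,{\bf a};L(G-v,X),{\bf b}))$.

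I do not expect a genuine obstacle: the argument is a classification of submatrices followed by one invocation of Lemma~\ref{lema:det1}. The only points demanding care are the bookkeeping that the three $x_v$-free families match the sets ${\rm minors}_j(\cdot)$ exactly as defined, and the verification of the small determinant $\det\left[\begin{smallmatrix} x_v & 1 \\ 1 & 0\end{smallmatrix}\right]=-1$, which is what turns the minus sign of Lemma~\ref{lema:det1} into the $+$ sign appearing in the fourth generator.
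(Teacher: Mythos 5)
Your proof is correct and is, in substance, the argument the paper intends: the paper's own ``proof'' of Proposition~\ref{eq:g} is only a pointer to Claim 3.12 of the earlier critical-ideals paper, and the argument you give---sorting the $j\times j$ submatrices of $L(G,X)=J(x_v,{\bf a};L(G-v,X),{\bf b})$ by whether they use the row and/or column of $v$, matching the three $x_v$-free families to the three ${\rm minors}_j$ sets, and expanding each submatrix containing $x_v$ via the $p_1=p_2$ case of Lemma~\ref{lema:det1}, where $\det\left[\begin{smallmatrix}x_v&1\\1&0\end{smallmatrix}\right]=-1$ turns the lemma's minus sign into the stated plus sign---is exactly the technique the paper itself uses in the proofs of Lemma~\ref{lema:d} and Lemmas~\ref{lema:gend} and~\ref{lema:genr}. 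Your write-up simply supplies, correctly and with the right edge cases ($j=1$ and $j=n$), the details the paper outsources to its citation.
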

\begin{proof}
The proof is simple and is similar to the one given in \cite[Claim 3.12]{critical}.
\end{proof}

We now give a description of the critical ideals of $d(G,v)$ in terms of the critical ideals of $G$.
Let $Y$ be a subset of the variables associated to the vertices of $G$ and ${\bf a}\in\mathcal{P}^{|Y|}$.
Through the paper, $I(G,X)|_{Y={\bf a}}$ will denote the evaluation of $I(G,X)$ at $Y={\bf a}$, and ${\rm minors}_{j}({\bf a}, L,{\bf b})$ will be the set
\[
\left\{ {\rm det}(M)\, : \, M=J(0,{\bf a'};M,{\bf b'}) \in M_j\left(J(0,{\bf a};L,{\bf b})\right)\text{ with } {\bf a',b'}\neq \emptyset \text{ subvectors of } {\bf a,b,} \text{ respectively}\right\}.
\]
Note that ${\rm minors}_{1}({\bf a}, L(G- v, X),{\bf b})=\emptyset$.

\begin{Lemma}\label{lema:d}
Let $G$ be a signed multidigraph with $n\geq 2$ vertices, $v\in V(G)$ and $v^1$ a duplication of $v$.
Then 
\[
I_{j}(d(G,v),X)\subseteq \langle x_{v},x_{v^1}, I_{j}(G,X)|_{x_v=0}\rangle,
\]
for all $1\leq j\leq n$. 
Moreover, $I_{j}(d(G,v),X)$ is trivial if and only if $I_{j}(G,X)|_{x_v=0}$ is trivial.
\end{Lemma}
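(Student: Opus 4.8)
The plan is to apply Proposition~\ref{eq:g} to the duplicated graph $d(G,v)$, taking the new vertex $v^1$ as the distinguished vertex. Since $d(G,v)-v^1=G$, I would write
\[
L(d(G,v),X)=J(x_{v^1},{\bf a};L(G,X),{\bf b}),
\]
where ${\bf a},{\bf b}\in\mathcal{P}^{n}$ are indexed by $V(G)$. Because $v^1$ has exactly the neighbours of $v$ and is not adjacent to $v$ (duplication), one has ${\bf a}_u=L(G,X)_{u,v}$ and ${\bf b}_u=L(G,X)_{v,u}$ for $u\neq v$, while ${\bf a}_v={\bf b}_v=0$. Equivalently, ${\bf a}$ is the $v$-th column of $L(G,X)$ with its diagonal entry $x_v$ replaced by $0$, and ${\bf b}$ is the $v$-th row of $L(G,X)$ with $x_v$ replaced by $0$. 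This single observation is what links the minors of $L(d(G,v),X)$ to the evaluation $I_j(G,X)|_{x_v=0}$.

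First I would prove the inclusion. Proposition~\ref{eq:g} expresses $I_j(d(G,v),X)$ as generated by four families: the $j$-minors of $L(G,X)$; the set ${\rm minors}_j({\bf a},L(G,X))$; the set ${\rm minors}_j(L(G,X),{\bf b})$; and the elements $x_{v^1}\,{\rm det}(M)+{\rm det}(J(0,{\bf a}';M,{\bf b}'))$. Each $j$-minor ${\rm det}(M)$ of $L(G,X)$ is affine in $x_v$ (which occurs only in the $(v,v)$ entry), so ${\rm det}(M)=x_v\cdot C+{\rm det}(M)|_{x_v=0}$ for a cofactor $C$, whence ${\rm det}(M)\in\langle x_v,\,I_j(G,X)|_{x_v=0}\rangle$. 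Using the description of ${\bf a}$ and ${\bf b}$, every element of ${\rm minors}_j({\bf a},L(G,X))$ and of ${\rm minors}_j(L(G,X),{\bf b})$ equals ${\rm det}(M)|_{x_v=0}$ for a suitable submatrix $M$ of $L(G,X)$, since replacing column $v$ by ${\bf a}$ (or row $v$ by ${\bf b}$) is precisely the substitution $x_v=0$; these therefore lie in $I_j(G,X)|_{x_v=0}$. In the fourth family the term ${\rm det}(J(0,{\bf a}';M,{\bf b}'))$ is again such an evaluation, while $x_{v^1}\,{\rm det}(M)\in\langle x_{v^1}\rangle$. Collecting these containments gives the stated inclusion $I_j(d(G,v),X)\subseteq\langle x_v,x_{v^1},I_j(G,X)|_{x_v=0}\rangle$.

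For the equivalence I would argue both directions. For the converse I first establish the reverse inclusion $I_j(G,X)|_{x_v=0}\subseteq I_j(d(G,v),X)$ by realising every generator ${\rm det}(M)|_{x_v=0}$ as a genuine $j$-minor of $L(d(G,v),X)$: if $M$ avoids row $v$ then $x_v$ does not occur and $M$ is already a submatrix of $L(d(G,v),X)$, whereas if $M$ uses both row $v$ and column $v$ one replaces its column $v$ by the column $v^1$, whose sole effect is to turn the entry $x_v$ into $0$. Hence, if $I_j(G,X)|_{x_v=0}$ is trivial then $1\in I_j(d(G,v),X)$, so $I_j(d(G,v),X)$ is trivial. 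The forward direction follows from the inclusion already proved: if $I_j(d(G,v),X)$ is trivial then $1\in\langle x_v,x_{v^1},I_j(G,X)|_{x_v=0}\rangle$, and applying the evaluation homomorphism $x_v\mapsto 0,\ x_{v^1}\mapsto 0$, which fixes every generator of $I_j(G,X)|_{x_v=0}$, yields $1\in I_j(G,X)|_{x_v=0}$.

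I expect the only delicate point to be the bookkeeping in the inclusion step: one must verify that each of the three minor-families produced by Proposition~\ref{eq:g} corresponds correctly to an evaluation ${\rm det}(M)|_{x_v=0}$, while tracking whether the chosen rows and columns meet the distinguished vertex. The characterisation of ${\bf a}$ and ${\bf b}$ as the $v$-column and $v$-row of $L(G,X)$ with the diagonal zeroed out is exactly what makes this identification routine, and it is the crux of the whole argument.
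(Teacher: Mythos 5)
Your proposal is correct in substance but takes a genuinely different route from the paper's. The paper splits off \emph{both} twins at once, writing $L(d(G,v),X)=J({\rm diag}(x_{v^1},x_{v}),{\bf a};L(G-v,X),{\bf b})$, and then uses the join-determinant formula (Lemma~\ref{lema:det1}) to classify all $j\times j$ minors according to how the chosen rows and columns meet the $2\times 2$ twin block; this produces the explicit generating set of Equation~\ref{eq:d}, which is then compared with the description of $I_j(G,X)|_{x_v=0}$ coming from Proposition~\ref{eq:g}. You instead peel off only the new vertex $v^1$, apply Proposition~\ref{eq:g} to $d(G,v)$ using $d(G,v)-v^1=G$, and exploit the single observation that the $v^1$ row and column of $L(d(G,v),X)$ are the $v$ row and column of $L(G,X)$ with the diagonal entry zeroed. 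Your route is shorter, avoids Lemma~\ref{lema:det1} entirely, and in addition makes explicit the reverse containment $I_j(G,X)|_{x_v=0}\subseteq I_j(d(G,v),X)$ (each evaluated minor is realized as an honest minor of $L(d(G,v),X)$), which the paper only gets implicitly by comparing generating sets; this gives a clean two-directional proof of the triviality equivalence. What your argument does not deliver is the explicit generating set (Equation~\ref{eq:d}) itself, which the paper reuses later as the template for Lemma~\ref{lema:gend}, so the heavier computation in the paper is not wasted.

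One claim in your inclusion step needs patching. It is not true that every element of ${\rm minors}_j({\bf a},L(G,X))$, of ${\rm minors}_j(L(G,X),{\bf b})$, or of the fourth family is an evaluation ${\rm det}(M)|_{x_v=0}$: this fails exactly when the chosen submatrix involves both twins, for instance both the appended column ${\bf a}$ (the $v^1$ column) and the column of $v$ itself, or when the inner block $M$ in ${\rm det}(J(0,{\bf a}';M,{\bf b}'))$ meets row $v$ or column $v$. In those cases such an element still contains $x_v$ or collapses: writing the $v$ column as its $x_v=0$ part plus $x_v$ times the standard basis vector $e_v$, multilinearity shows the minor is either zero (two equal columns or rows) or $x_v$ times a smaller minor, hence lies in $\langle x_v\rangle\subseteq\langle x_{v},x_{v^1},I_j(G,X)|_{x_v=0}\rangle$. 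So your conclusion stands, but this case analysis must be written out; it is precisely the bookkeeping you flagged as the delicate point, and it is slightly more than the ``routine identification'' your last paragraph suggests.
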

\begin{proof}
The main idea is to give a description of the $j$-{\it th} critical of $d(G,v)$ in terms of some types of minors, 
similar to the one given in Proposition~\ref{eq:g}, and then use this description to prove the containment.

First, it is not difficult to see that $I_1(d(G, v),X)=\langle x_{v},x_{v^1}, I_1(G,X)|_{x_v=0}\rangle$.
Now, let $\mathcal{I, I'}\subseteq [n+1]$ be two sets of size $j$, and  $\mathcal I_{\{1,2\}}=\{1,2\}\cap \mathcal I$ and $\mathcal I'_{\{1,2\}}=\{1,2\}\cap \mathcal I'$.
Without loss of generality, we might order the vertices such that $x_{v^1}$ is in the entry $(1,1)$ and $x_v$ is in the entry $(2,2)$ of $L(d(G,v),X)$.
Clearly, 
\[
L(d(G,v),X)=J({\rm diag}(x_{v^1}, x_{v}), {\bf a}; L(G- v, X), {\bf b}),
\]
where $L(G,X)=J(x_v, {\bf a}; L(G-v,X), {\bf b})$ for some ${\bf a}, {\bf b}\in \mathcal{P}^{n-1}$.
Let $m_{\mathcal{ I,I'}}={\rm det}(L(d(G,v), X)[\mathcal{I,I'}])\in I_j(d(G,v),X)$.

If $\mathcal I_{\{1,2\}}= \mathcal I'_{\{1,2\}}=\{a\}$, then Lemma~\ref{lema:det1} implies that for some matrix 
$J(x_{v},{\bf a'};M,{\bf b'})\in M_{j}(L(G,X))$ with $M\in M_{j-1}(L(G-v,X))$,
\[
m_{\mathcal{I,I'}}={\rm det}(J(x_{v^a},{\bf a'};M,{\bf b'})) = x_{v^a}\cdot {\rm det}(M)+{\rm det}(J(0,{\bf a'};M,{\bf b'})). 
\]
And, if $|\mathcal I_{\{1,2\}}|, |\mathcal I'_{\{1,2\}}|=1$ and $\mathcal I_{\{1,2\}}\cap \mathcal I'_{\{1,2\}}=\emptyset$, then 
$m_{\mathcal {I,I'}}={\rm det}(J(0,{\bf a'};M,{\bf b'}))$ for some $J(0,{\bf a'};M,{\bf b'}) \in M_{j}(L(G,X))$.
On the other hand, since ${\rm det}(J(x,1;1,0))={\rm det}(J(x,0;1,1))=x$,
\[
m_{\mathcal{I,I'}}\in 
\begin{cases}
\left\{ x_{v^i}\cdot {\rm minors}_{j-1}({\bf a}, L(G- v, X))\right\}_{i=0}^1 & \text{ if } |\mathcal I_{\{1,2\}}|=2, |\mathcal I'_{\{1,2\}}|=1,\\
\left\{ x_{v^i}\cdot {\rm minors}_{j-1}(L(G- v, X), {\bf b})\right\}_{i=0}^1 & \text{ if } |\mathcal I_{\{1,2\}}|=1, |\mathcal I'_{\{1,2\}}|=2.
\end{cases}
\]
Finally, since ${\rm det}(J({\rm diag}(x_{v^1},x_{v}),(1,1);0,(1,1)))=-(x_{v^1}+x_{v})$, Lemma~\ref{lema:det1} implies that $m_{\mathcal {I,I'}}$ belongs to
{\small
\[
S_{j}(G,v)\,=\, \left\{x_{v}x_{v^1}\cdot {\rm det}(M)\,+\,(x_{v}+x_{v^1})\cdot {\rm det}(J(0,{\bf a'};M,{\bf b'})) \, :\, 
J(x_v,{\bf a}';M,{\bf b}') \in M_{j-1}(L(G,X)) \text{ with } {\bf a}',{\bf b}'\neq \emptyset \right\},
\] 
}
when $\mathcal I_{\{1,2\}}$ and $\mathcal I'_{\{1,2\}}$ are equal to $\{1,2\}$.
By convention $S_{1}(G,v)=\{x_v\}$ and $S_{2}(G,v)=\{x_{v}x_{v^1}\}$.

Therefore, for $1\leq j\leq n-1$, the $j$-{\it th} critical ideal of $d(G,v)$ has the following expression:
\begin{eqnarray}\label{eq:d}
\nonumber 
I_{j}(d(G,v),X)&=&\langle {\rm minors}_{j}(L(G- v, X)), \left\{ x_{v^i}\cdot {\rm minors}_{j-1}(L(G- v, X))\right\}_{i=0}^1,\\
\nonumber &&
{\rm minors}_{j}({\bf a}, L(G- v, X)),\left\{ x_{v^i}\cdot {\rm minors}_{j-1}({\bf a}, L(G- v, X))\right\}_{i=0}^1,\\
&&
{\rm minors}_{j}(L(G- v, X),{\bf b}), \left\{ x_{v^i}\cdot {\rm minors}_{j-1}(L(G- v, X), {\bf b})\right\}_{i=0}^1,\\
\nonumber &&
{\rm minors}_{j}({\bf a}, L(G- v, X),{\bf b}), S_{j}(G,v)
\rangle.
\end{eqnarray}
Thus, $I_2(d(G, v),X)\subseteq \langle x_{v},x_{v^1}, I_2(G,X)|_{x_v=0}\rangle$. 
Also, in a similar way, $I_{n}(d(G,v),X)$ is equal to 
\begin{eqnarray*}
\langle  \left\{ x_{v^i}\cdot {\rm det}(L(G-v,X))\right\}_{i=0}^1, 
{\rm det}( J(0,{\bf a};L(G-v,X),{\bf b})),
\left\{ x_{v^i}\cdot {\rm minors}_{n-1}({\bf a}, L(G-v,X))\right\}_{i=0}^1,\\
\left\{ x_{v^i}\cdot {\rm minors}_{n-1}(L(G-v,X), {\bf b})\right\}_{i=0}^1,
S_{n}(G,v) \rangle. 
\end{eqnarray*}

On the other hand, by Proposition~\ref{eq:g} we have that $I_{j}(G, X)|_{x_v=0}$ is equal to 
{
\[
\langle  {\rm minors}_{j}(L(G-v,X)), 
{\rm minors}_{j}({\bf a}, L(G-v,X)), {\rm minors}_{j}(L(G-v,X), {\bf b}), {\rm minors}_{j}({\bf a},L(G-v,X),{\bf b})
\rangle,
\]
}
\noindent for $1\leq j\leq n-1$, and $I_{n}(G, X)|_{x_v=0}=\langle {\rm det }(L(G,X)|_{x_v=0})\rangle=
\langle {\rm det} (J(0,{\bf a}; L(G-v,X_{G-v}),{\bf b})) \rangle$.
By using the previous equalities, we get that
\[
I_{j}(d(G,v),X) \subseteq \langle x_{v}, x_{v^1}, I_{j}(G, X)|_{x_v=0} \rangle
\] 
for $1\leq j\leq n$.
Therefore $I_{j}(d(G,v),X)$ is trivial if and only if $I_{j}(G,X)|_{x_v=0}$ is trivial.
\end{proof}

Now, we give a description of the critical ideals of the replication of a vertex of a signed multidigraph.

\begin{Lemma}\label{lema:r}
Let $G$ be a signed multidigraph with $n\geq 2$ vertices and $v$ be a vertex of $G$.
Then 
\[
I_{j}(r(G,v),X)\subseteq \langle x_{v}+1,x_{v^1}+1, I_{j}(G,X)|_{x_v=-1}\rangle,
\]
for all $1\leq j \leq n$.
Moreover, $I_{j}(r(G,v),X)$ is trivial if and only if $I_{j}(G,X)|_{x_v=-1}$ is trivial. 
\end{Lemma}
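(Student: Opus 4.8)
The plan is to mirror the structure of the proof of Lemma~\ref{lema:d} as closely as possible, replacing the duplication construction with replication and tracking the single structural difference: the two new diagonal entries $x_v,x_{v^1}$ are now joined to each other by signed arcs, so the $(1,2)$ and $(2,1)$ entries of the $2\times 2$ block are $-1$ instead of $0$. First I would write, for a chosen ordering of vertices placing $x_{v^1}$ in entry $(1,1)$ and $x_v$ in entry $(2,2)$,
\[
L(r(G,v),X)=J\!\left(\left[\begin{array}{cc} x_{v^1}&-1\\ -1&x_v\end{array}\right],{\bf a};\,L(G-v,X),{\bf b}\right),
\]
again with $L(G,X)=J(x_v,{\bf a};L(G-v,X),{\bf b})$. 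The point is that every $j\times j$ submatrix of $L(r(G,v),X)$ is itself a join of matrices, so Lemma~\ref{lema:det1} computes its determinant, and the four cases of that lemma sort the generators of $I_j(r(G,v),X)$ into exactly the same families of minors that appeared in~\eqref{eq:d}.

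The key computation is to redo the three evaluation constants from the duplication proof in the replicated setting. When $\mathcal I_{\{1,2\}}=\mathcal I'_{\{1,2\}}=\{a\}$ the relevant constant is still ${\rm det}(J(x_{v^a},1;1,0))={\rm det}(J(x_{v^a},0;1,1))=x_{v^a}$, since only one of the two new rows and columns is selected, so the off-diagonal $-1$ never enters; these generators are therefore identical to the duplicated case. The genuine difference appears only when $\mathcal I_{\{1,2\}}=\mathcal I'_{\{1,2\}}=\{1,2\}$: there I would compute the replacement for the constant $-(x_v+x_{v^1})$, namely the determinant of the full $2\times 2$ block against the all-ones joining vectors. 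Concretely I expect
\[
{\rm det}\!\left(J\!\left(\left[\begin{array}{cc} x_{v^1}&-1\\ -1&x_v\end{array}\right],(1,1);\,0,(1,1)\right)\right)
=x_{v^1}x_v-1-(x_{v^1}+x_v+2)=x_{v^1}x_v-(x_{v^1}+x_v)-3,
\]
which I would verify by direct $3\times 3$ or cofactor expansion; the salient features are that the top-degree term $x_vx_{v^1}\cdot{\rm det}(M)$ survives and the linear coefficient is now $(x_v+1)+(x_{v^1}+1)$ rather than $x_v+x_{v^1}$. This produces a replicated analogue $S_j^{r}(G,v)$ of the set $S_j(G,v)$, and the remaining generators of $I_j(r(G,v),X)$ coincide with those of~\eqref{eq:d} up to the same shift $x_v\mapsto x_v+1$.

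With the generator description in hand, the containment follows by the same comparison used in Lemma~\ref{lema:d}: Proposition~\ref{eq:g} gives the generators of $I_j(G,X)$, and I would evaluate that ideal at $x_v=-1$ and match families term by term, using $x_v+1$ and $x_{v^1}+1$ to absorb the extra constants. Since each generator of $I_j(r(G,v),X)$ lies in $\langle x_v+1,\,x_{v^1}+1,\,I_j(G,X)|_{x_v=-1}\rangle$, the containment holds for all $1\le j\le n$, and the triviality equivalence follows exactly as before: reducing modulo $x_v+1$ and $x_{v^1}+1$ sends $I_j(r(G,v),X)$ onto $I_j(G,X)|_{x_v=-1}$, so one is trivial precisely when the other is. The main obstacle I anticipate is purely computational bookkeeping, namely correctly evaluating the $2\times 2$-block constants and confirming that the off-diagonal $-1$ affects only the $\{1,2\}\times\{1,2\}$ case while leaving the single-index and mixed cases untouched; once those constants are pinned down, the argument is a routine shift of the duplication proof and the shift $0\rightsquigarrow -1$ is exactly what converts $x_v=0$ into $x_v=-1$.
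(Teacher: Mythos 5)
Your overall plan is the paper's plan (rerun the proof of Lemma~\ref{lema:d} with the block ${\rm diag}(x_{v^1},x_v)$ replaced by the $2\times 2$ block with off-diagonal $-1$'s), but your accounting of which minors feel that $-1$ is wrong, and the error is fatal to the argument as you describe it. The off-diagonal $-1$ does \emph{not} affect only the case $\mathcal I_{\{1,2\}}=\mathcal I'_{\{1,2\}}=\{1,2\}$. (i) In the unbalanced cases $|\mathcal I_{\{1,2\}}|=2$, $|\mathcal I'_{\{1,2\}}|=1$ (and symmetrically), the selected twin block is $(x_{v^i},-1)^T$ rather than $(x_{v^i},0)^T$, so the constant produced by Lemma~\ref{lema:det1} is
\[
{\rm det}\left[\begin{array}{cc} x_{v^i} & 1\\ -1 & 1\end{array}\right]=x_{v^i}+1,
\]
not $x_{v^i}$; this is exactly why every factor $x_{v^i}$ of \eqref{eq:d} becomes $x_{v^i}+1$ in the paper's expression \eqref{eq:r}, and this change is what makes the lemma true: if those generators really were $x_{v^i}\cdot{\rm minors}_{j-1}(\cdots)$, they would in general \emph{not} lie in $\langle x_v+1,x_{v^1}+1,I_j(G,X)|_{x_v=-1}\rangle$, because $x_{v^i}m=(x_{v^i}+1)m-m$ and a $(j-1)$-minor $m$ of $L(G-v,X)$ need not belong to that ideal. (ii) In the disjoint-singleton case $\mathcal I_{\{1,2\}}=\{1\}$, $\mathcal I'_{\{1,2\}}=\{2\}$, the corner of the selected submatrix is the off-diagonal $-1$, so the minor is ${\rm det}(J(-1,{\bf a}';M,{\bf b}'))$, not ${\rm det}(J(0,{\bf a}';M,{\bf b}'))$. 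These minors form the set $R_j(G,v)$ of the paper's proof, and via the identity ${\rm det}(J(-1,{\bf a};M,{\bf b}))=-{\rm det}(M)+{\rm det}(J(0,{\bf a};M,{\bf b}))$ --- the one ``significant difference'' the paper flags --- they are precisely what matches the fourth family of generators of $I_j(G,X)|_{x_v=-1}$ coming from Proposition~\ref{eq:g}; without them your term-by-term matching cannot be completed.

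There is also a concrete slip in the one case you do treat: the bordered matrix in Lemma~\ref{lema:det1} has corner entry $0$ (it is the join with $Q=0$), so the correct constant is
\[
{\rm det}\left[\begin{array}{ccc} x_{v^1}&-1&1\\ -1&x_v&1\\ 1&1&0\end{array}\right]=-\bigl((x_v+1)+(x_{v^1}+1)\bigr);
\]
your value $x_vx_{v^1}-(x_v+x_{v^1})-3$ comes from putting a $1$ in that corner, and in particular the quadratic term you claim ``survives'' is absent from this constant. Combining the correct constant with the identity above gives the paper's generators $\widetilde S_j(G,v)$, namely $(x_v+1)(x_{v^1}+1)\,{\rm det}(M)+\bigl((x_v+1)+(x_{v^1}+1)\bigr)\,{\rm det}(J(-1,{\bf a}';M,{\bf b}'))$. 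In short, the route is right, but every case in which both twin indices occur is altered by replication, and those alterations are exactly what produce the $(x_{v^i}+1)$ factors and the evaluation at $x_v=-1$ in the statement; as written, your proof misidentifies the generating set and the containment check breaks down.
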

\begin{proof}
We will give an analogous proof to the one of Lemma~\ref{lema:d}.
We need to make a significant difference in the identity ${\rm det}(J(-1,{\bf a};M,{\bf b})) = -{\rm det}(M)+{\rm det}(J(0,{\bf a};M,{\bf b}))$.
Firstly, for all $1\leq j\leq n-1$, the $j$-{\it th} critical ideal of the graph obtained by replicating vertex $v$ in $G$ has the following expression:
\begin{eqnarray}\label{eq:r}
\nonumber I_{j}(r(G,v),X)&=&\langle {\rm minors}_{j}(L(G-  v),X), \left\{ (x_{v^i}+1)\cdot {\rm minors}_{j-1}(L(G-v,X))\right\}_{i=0}^1,\\
\nonumber && 
{\rm minors}_{j}({\bf a}, L(G-v,X)),\left\{ (x_{v^i}+1)\cdot {\rm minors}_{j-1}({\bf a},L(G-v,X))\right\}_{i=0}^1,\\
&&
{\rm minors}_{j}(L(G-v,X), {\bf b}), \left\{ (x_{v^i}+1)\cdot {\rm minors}_{j-1}(L(G-v,X), {\bf b})\right\}_{i=0}^1,\\
\nonumber && R_j(G,v), \widetilde{S}_{j}(G,v) \rangle,
\end{eqnarray}
\noindent where $R_j(G,v)=\left\{ {\rm det}(J(-1,{\bf a'};M,{\bf b'})) = -{\rm det}(M)+{\rm det}(J(0,{\bf a'};M,{\bf b'})) \, :\, J(x_v,{\bf a'};M,{\bf b'}) \in M_{j}(L(G,X))\right\}$
and
{
$\widetilde{S}_{j}(G,v)=\{(x_{v}+1)(x_{v^1}+1)\cdot {\rm det}(M)+((x_{v}+1)+(x_{v^1}+1))\cdot 
{\rm det}(J(-1,{\bf a'};M,{\bf b'})) \, :\, J(x_v,{\bf a'};M,{\bf b'}) \in M_{j-1}(L(G,X))\}$.
}
Besides, the $n$-{\it th} critical ideal of $r(G,v)$ has the following expression:
\begin{eqnarray*}
I_{n}(r(G,v),X)&=& \langle  \left\{ (x_{v^i}+1)\cdot {\rm det}(L(G-v,X))\right\}_{i=0}^1, 
{\rm det}(J(-1,{\bf a};L(G-v,X),{\bf b}) ),\\ 
& & \left\{ (x_{v^i}+1)\cdot {\rm minors}_{n-1}({\bf a},L(G-v,X))\right\}_{i=0}^1, \\
& & \left\{ (x_{v^i}+1)\cdot {\rm minors}_{n-1}(L(G-v,X), {\bf b})\right\}_{i=0}^1,
\widetilde{S}_{n}(G,v)\rangle. 
\end{eqnarray*}

On the other hand,  by Proposition \ref{eq:g} we have that
$I_{j}(G, X)|_{x_v=-1}$ is equal to 
\[
\langle  {\rm minors}_{j}(L(G-v,X)), {\rm minors}_{j}({\bf a}, L(G-v,X)), {\rm minors}_{j}(L(G-v,X), {\bf b}), R_j(G,v) \rangle,
\]
for all $1\leq j\leq n-1$, and $I_{n}(G, X)|_{x_v=-1}=\langle {\rm det }(L(G,X))|_{x_v=-1}\rangle=
\langle {\rm det}(J(-1,{\bf a};L(G-v,X),{\bf b}) ) \rangle$.
Therefore, 
\[
I_{j}(r(G,v),X) \subseteq \langle x_{v}+1, x_{v^1}+1, I_{j}(G, X)|_{x_v=-1} \rangle
\]
for all $1\leq j\leq n$.
Finally, it is clear that $I_{j}(r(G,v),X)$ is trivial if and only if $I_{j}(G,X)_{x_v=-1}$ is trivial.
\end{proof}

The next example shows a signed multidigraph satisfying the equality in the inclusions given in Lemmas~\ref{lema:d} and~\ref{lema:r}.
\begin{Example}
Let $G$ be the cycle with five vertices, where the arcs $v_2v_1$ and $v_1v_5$ have negative signs, see Figure~\ref{fig:00}.

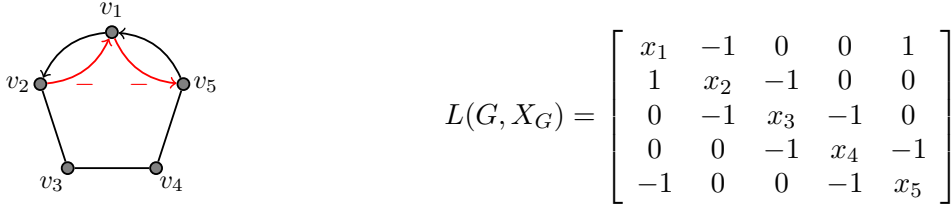
\begin{figure}[h]
\begin{center}
\begin{tabular}{c@{\extracolsep{2cm}}c}
\multirow{9}{40mm}{
\vspace{40mm}
\begin{tikzpicture}[scale=1, line width=0.7pt]
\tikzstyle{every node}=[minimum width=4.5pt, inner sep=0pt, circle]
\draw (72+18:1) node (v1) [draw, fill=gray, label=above:{\small $v_1$}] {};
\draw (144+18:1) node (v2) [draw, fill=gray, label=left:{\small $v_2$}] {};
\draw (216+18:1) node (v3) [draw, fill=gray, label=below left:{\small $v_3$}] {};
\draw (288+18:1) node (v4) [draw, fill=gray, label=below right:{\small $v_4$}] {};
\draw (18:1) node (v5) [draw, fill=gray, label=right:{\small $v_5$}] {};
\draw(v2) edge (v3);
\draw (v3) edge (v4);
\draw (v4) edge (v5);
\path[->,bend right, red] (v2) edge node[below] {\small $-$} (v1)
(v1) edge node[below] {\small $-$} (v5);
\path[->,bend right] (v5) edge (v1)
(v1) edge (v2);
\end{tikzpicture}
}
&
\\
&
$
L(G, X_G)=
\left[\begin{array}{cccccc}
x_1 & -1 & 0 &  0 & 1 \\
 1 & x_2 & -1 & 0 & 0 \\
 0 & -1 &  x_3 & -1 & 0 \\
 0 &   0 &  -1 &  x_4 & -1 \\
 -1 &   0 &  0 &  -1 & x_5 \\
\end{array}\right]
$
\end{tabular}
\end{center}
\caption{A signed multidigraph $G$ with five vertices and its generalized Laplacian matrix.}
\label{fig:00}
\end{figure}
It can be check that the algebraic co-rank of the graph $G$ is equal to 3, when $\mathcal P = \mathbb Z$.
Since $I_4(G,X)$ is given by $\langle x_1x_2+x_4+1, x_2x_3-x_5-1, x_3x_4+x_1-1, x_4x_5-x_2-1, x_1x_5+x_3+1 \rangle$,
$I_4(G,X)|_{x_{1}=0} = \langle x_3+1, x_4+1, x_3x_4-1, x_2x_3-x_5-1,x_4x_5-x_2-1 \rangle$ = $ \langle x_3+1, x_4+1, x_2+x_5+1 \rangle$
and 
\begin{eqnarray*}
I_4(G,X)|_{x_{1}=-1}&=& \langle -x_5+x_3+1, -x_2+x_4+1, x_4x_5-x_2-1, x_3x_4-2, x_2x_3-x_5-1 \rangle\\
&=&\langle x_3-x_5+1, x_2-x_4-1, x_4x_5-x_4-2 \rangle.
\end{eqnarray*}
On the other hand, the $4$-{\it th} critical ideal $I_4(d(G,v_1),X)$ is equal to $\langle x_{1}, x_{1^1}, x_3+1, x_4+1, x_2+x_5+1 \rangle$, and the $4$-{\it th} critical ideal  $I_4(r(G,v_1),X)$ is equal to 
\[
\langle x_{1}+1, x_{1^1}+1, x_3-x_5+1, x_2-x_4-1, x_4x_5-x_4-2 \rangle.
\]
\end{Example}

Successive applications of Lemmas~\ref{lema:d} and~\ref{lema:r} lead to the following result:

\begin{Theorem}\label{teo:rd}
Let $G$ be a signed multidigraph with $n\geq 2$ vertices, ${\bf d}\in \mathbb{Z}^{n}$, and
\[
\phi({\bf d})_v =
\begin{cases}
0 & \text{ if }{\bf d}_v>0,\\
-1 & \text{ if }{\bf d}_v<0,\\
x_v & \text{ if }{\bf d}_v=0.
\end{cases}
\]
Then the $j$-{\it th} critical ideal $I_{j}(G^{\bf d},X)$ is included in the ideal
\[
\left\langle \{\{x_{v^i}\}_{i=0}^{{\bf d}_v}\, : {\bf d}_v\geq 1\},\{\{x_{v^i}+1\}_{i=0}^{-{\bf d}_v}\, : \, {\bf d}_v\leq -1\}, I_{j}(G,X)|_{X=\phi({\bf d})} \} \right\rangle \text{ for all }1\leq j \leq n.
\]
Moreover, $I_{j}(G^{\bf d},X)$ is trivial if and only if $I_{j}(G,X)|_{X=\phi({\bf d})}$ is trivial.
\end{Theorem}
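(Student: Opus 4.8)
The plan is to prove Theorem~\ref{teo:rd} by induction on $\|{\bf d}\|_1 = \sum_v |{\bf d}_v|$, the total number of duplication and replication operations used to build $G^{\bf d}$ from $G$. The base case $\|{\bf d}\|_1 = 0$ gives $G^{\bf d} = G$ and $\phi({\bf d})_v = x_v$ for every $v$, so $I_j(G,X)|_{X=\phi({\bf d})} = I_j(G,X)$ and the inclusion is an equality; the index sets on the left are empty, so there is nothing to prove. For the inductive step I would peel off a single operation: choose a vertex $v$ with ${\bf d}_v \neq 0$, and let ${\bf d}'$ be the vector obtained from ${\bf d}$ by decreasing $|{\bf d}_v|$ by one (keeping its sign). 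Then $G^{\bf d}$ is obtained from $G^{{\bf d}'}$ by a single duplication (if ${\bf d}_v > 0$) or replication (if ${\bf d}_v < 0$) of a vertex in the twin-class $V(G^{\bf d}, v)$, so Lemma~\ref{lema:d} or Lemma~\ref{lema:r} applies directly to $G^{{\bf d}'}$.

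\textbf{Duplication case.} Suppose ${\bf d}_v > 0$. Writing $w$ for the vertex of $G^{{\bf d}'}$ being duplicated, Lemma~\ref{lema:d} gives
\[
I_j(G^{\bf d},X) \subseteq \langle x_w, x_{w^1}, I_j(G^{{\bf d}'},X)|_{x_w=0}\rangle.
\]
By the induction hypothesis, $I_j(G^{{\bf d}'},X)$ is contained in the ideal generated by the appropriate variables $x_{u^i}$ and $x_{u^i}+1$ together with $I_j(G,X)|_{X=\phi({\bf d}')}$. Evaluating at $x_w = 0$ commutes with taking the ideal, and since $\phi({\bf d})$ differs from $\phi({\bf d}')$ only in that the entry for $v$ is now set to $0$ (both ${\bf d}_v$ and ${\bf d}'_v$ being positive means $\phi(\cdot)_v = 0$ in each—so in fact $\phi({\bf d})=\phi({\bf d}')$ here), the evaluated critical ideal of $G$ is unchanged, and the new generators $x_w, x_{w^1}$ simply augment the existing list of variable generators for the class $V(G^{\bf d},v)$. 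The key bookkeeping point is that all the variables $x_{v^i}$ for $0 \le i \le {\bf d}_v$ appear as generators exactly once: those for $i \le {\bf d}'_v$ come from the induction hypothesis (with the top index's variable being the $x_w=0$ relation that gets absorbed), and the two fresh generators account for the newly created vertex.

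\textbf{Replication case.} Suppose ${\bf d}_v < 0$. The identical argument using Lemma~\ref{lema:r} yields
\[
I_j(G^{\bf d},X) \subseteq \langle x_w+1, x_{w^1}+1, I_j(G^{{\bf d}'},X)|_{x_w=-1}\rangle,
\]
and again $\phi({\bf d}) = \phi({\bf d}')$ since both have negative $v$-entry, so evaluating $x_w = -1$ leaves $I_j(G,X)|_{X=\phi({\bf d})}$ intact while contributing the generators $x_w+1, x_{w^1}+1$. Assembling over all $v$ gives the stated inclusion for $1 \le j \le n$. \textbf{The main obstacle} I anticipate is the \emph{triviality} equivalence rather than the inclusion: I must verify that when I pass from $G^{{\bf d}'}$ to $G^{\bf d}$, the ideal $I_j(G^{\bf d},X)$ is trivial \emph{if and only if} $I_j(G,X)|_{X=\phi({\bf d})}$ is. The forward direction uses the inclusion together with the fact that evaluating the containing ideal at $X = \phi({\bf d})$ (which sends every $x_{v^i} \mapsto 0$ and every $x_{v^i}+1 \mapsto 0$ appropriately) collapses all the variable generators to $0$, leaving exactly $I_j(G,X)|_{X=\phi({\bf d})}$; the reverse direction is supplied cleanly by the ``moreover'' clauses of Lemmas~\ref{lema:d} and~\ref{lema:r} chained along the induction, since each lemma states the triviality equivalence at every single step. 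I would make this chaining explicit by noting that $I_j(G^{\bf d},X)$ is trivial iff $I_j(G^{{\bf d}'},X)|_{x_w=0}$ (resp.\ $|_{x_w=-1}$) is trivial, iff $I_j(G^{{\bf d}'},X)$ evaluated at the accumulated substitutions is trivial, which by induction is $I_j(G,X)|_{X=\phi({\bf d})}$.
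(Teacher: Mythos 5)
Your strategy --- induction on $\|{\bf d}\|_1$, peeling off one operation at a time and applying Lemma~\ref{lema:d} or Lemma~\ref{lema:r} --- is exactly the route the paper intends (its entire proof is the one sentence that the theorem follows by successive applications of those two lemmas), and your treatment of the inclusion and of the forward triviality implication is sound. One small slip in the inclusion step: the parenthetical claim $\phi({\bf d})=\phi({\bf d}')$ fails precisely in the boundary case $|{\bf d}_v|=1$, since then ${\bf d}'_v=0$, so $\phi({\bf d}')_v=x_v$ while $\phi({\bf d})_v\in\{0,-1\}$. The argument still closes, because the substitution $x_w=0$ (resp.\ $x_w=-1$) supplied by the lemma is exactly what turns $I_j(G,X)|_{X=\phi({\bf d}')}$ into $I_j(G,X)|_{X=\phi({\bf d})}$; but it should be stated that way, not as an equality of the two substitution vectors.

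The genuine gap is the reverse triviality implication, i.e.\ that triviality of $I_{j}(G,X)|_{X=\phi({\bf d})}$ forces triviality of $I_{j}(G^{\bf d},X)$. Your chain ``$I_j(G^{\bf d},X)$ trivial iff $I_j(G^{{\bf d}'},X)|_{x_w=0}$ trivial, iff $I_j(G^{{\bf d}'},X)$ evaluated at the accumulated substitutions is trivial, which by induction is $I_j(G,X)|_{X=\phi({\bf d})}$'' breaks at both of its last two steps. Triviality of a deeper evaluation does not imply triviality of a shallower one: in $\mathcal{P}[x,y]$ the ideal $\langle y-1\rangle$ becomes trivial after setting $y=0$ but is not trivial after setting only $x=0$; indeed, the fact that $I_j(G,X)|_{x_v=0}$ can be trivial while $I_j(G,X)$ is not is the very mechanism by which duplication raises the algebraic co-rank, so this implication is false exactly in the situations the theorem is about. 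And the final ``by induction'' invokes an equality --- that $I_j(G^{{\bf d}'},X)$ evaluated at all accumulated substitutions equals $I_j(G,X)|_{X=\phi({\bf d})}$ --- which the induction hypothesis (an upper inclusion plus a triviality biconditional for ${\bf d}'$) simply does not provide. The repair is to carry a second, \emph{lower} inclusion through the induction, namely $I_j(G,X)|_{X=\phi({\bf d})}\subseteq I_j(G^{\bf d},X)$. Its one-step version, $I_j(H,X)|_{x_w=0}\subseteq I_j(d(H,w),X)$ and $I_j(H,X)|_{x_w=-1}\subseteq I_j(r(H,w),X)$, is what the \emph{proofs} (not the statements) of Lemmas~\ref{lema:d} and~\ref{lema:r} establish via Equations~\eqref{eq:d} and~\eqref{eq:r} together with Proposition~\ref{eq:g}; alternatively it follows at once from the observation that $L(H,X)|_{x_w=0}$ (resp.\ $L(H,X)|_{x_w=-1}$) occurs as an honest submatrix of $L(d(H,w),X)$ (resp.\ $L(r(H,w),X)$): take the row of the new twin $w^1$, the column of $w$, and the rows and columns of all remaining vertices. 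With both inclusions carried along, the biconditional is immediate: evaluate the upper inclusion at the twin variables for one direction (as you did), and use the lower inclusion for the other.
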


This theorem shows that the algebraic co-rank of $G^{\bf d}$ is determined by an evaluation of the critical ideals of $G$.
It is well known \cite{critical} that the evaluation of the critical ideals of $G$ determines the critical group of $G$.
These facts open the question about the meaning of another evaluations of the critical ideals of a graph.

Next example illustrates Lemma~\ref{lema:d}, Lemma~\ref{lema:r} and Theorem~\ref{teo:rd}.
\begin{Example}
Let $G$ be the graph given by Figure~\ref{fig:0}.
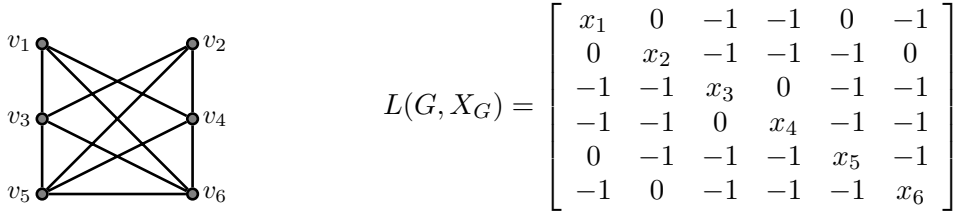
\begin{figure}[h]
\begin{center}
\begin{tabular}{c@{\extracolsep{2cm}}c}
\multirow{9}{3cm}{
	\begin{tikzpicture}[line width=1pt, scale=1]
		\tikzstyle{every node}=[inner sep=0pt, minimum width=4pt] 
		\draw (-1,1) node (v1) [draw, circle, fill=gray] {};
		\draw (1,1) node (v2) [draw, circle, fill=gray] {};
		\draw (-1,0) node (v3) [draw, circle, fill=gray] {};
		\draw (1,0) node (v4) [draw, circle, fill=gray] {};
		\draw (-1,-1) node (v5) [draw, circle, fill=gray] {};
		\draw (1,-1) node (v6) [draw, circle, fill=gray] {};
		\draw (v1)+(-0.3,0) node () {\small $v_1$};
		\draw (v2)+(0.3,0) node () {\small $v_2$};
		\draw (v3)+(-0.3,0) node () {\small $v_3$};
		\draw (v4)+(0.3,0) node () {\small $v_4$};
		\draw (v5)+(-0.3,0) node () {\small $v_5$};
		\draw (v6)+(0.3,0) node () {\small $v_6$};
		\draw (v1) -- (v3) -- (v5) -- (v6) -- (v3) -- (v2) -- (v4) -- (v6) -- (v1) -- (v4) -- (v5) -- (v2);
	\end{tikzpicture}
}
& \\
&
$
L(G, X_G)=
\left[\begin{array}{cccccc}
x_1 &    0 &     -1 &    -1 &     0 &   -1 \\
   0 & x_2 &     -1 &    -1 &    -1 &    0 \\
  -1 &    -1 &  x_3 &     0 &    -1 &   -1 \\
  -1 &    -1 &      0 &  x_4 &   -1 &   -1 \\
   0 &    -1 &     -1 &    -1 & x_5 &   -1 \\
  -1 &     0 &     -1 &    -1 &   -1 & x_6 
\end{array}\right]
$\\
& \\
\end{tabular}
\end{center}
\caption{A graph $G$ with eight vertices and its generalized Laplacian matrix.}
\label{fig:0}
\end{figure}

Using a computer algebra system, we can see that $\gamma_{\mathbb{Z}}(G)=3$ and their non-trivial critical ideals are the following:
{\small
\begin{eqnarray*}
I_{4}(G,X) &\!\!\!\!\!\!=\!\!\!\!\!\!& \langle  x_3, x_4, x_1x_2+1, (x_1-1)x_6-2, (x_2-1)x_5-2, x_1x_5+x_5+2x_1, x_2x_6+x_6+2x_2, x_5x_6+x_6+x_5+2 \rangle,\\
I_5(G,X) &\!\!\!\!\!\!=\!\!\!\!\!\!& \langle x_2x_4x_5(x_6+1)-x_4x_6, x_2x_3x_4+x_2x_3x_6+x_2x_4x_6+2x_2x_3+2x_2x_4+x_3x_6+x_4x_6,\\
&&
x_1x_3x_4+x_1x_3x_5+x_1x_4x_5+2x_1x_3+2x_1x_4+x_3x_5+x_4x_5,  x_1x_4x_6(x_5+1)-x_4x_5, \\ 
&& 
x_1x_4(x_2x_6+x_2+x_6)-x_4, x_1x_3(x_2x_6+x_2+x_6)-x_3, (x_3+x_4)(x_5x_6+x_5+x_6+2)+x_3x_4,\\ 
&&
x_1x_2 (x_6+x_5)+x_5 x_6(x_1+x_2)+2(x_1x_2+x_1x_6+x_2x_5)- x_5-x_6-2 \rangle,\\
I_6(G, X)&\!\!\!\!\!\!=\!\!\!\!\!\!& \langle {\rm det}(L(G,X))\rangle.
\end{eqnarray*}
}
From these equalities and applying Theorem~\ref{teo:rd}, we can easily obtain that the critical ideals 
$I_4(d(G,v_i),X)$ and $I_4(r(G,v_j),X)$ are trivial for all $i\in\{1,2\}$ and $j\in \{3,4\}$.
Furthermore, the ideals $I_4(G^{{\bf e}_1-{\bf e}_6},X)$, $I_4(G^{{\bf e}_1-{\bf e}_5},X)$, $I_4(G^{{\bf e}_2-{\bf e}_5},X)$, $
I_4(G^{{\bf e}_2-{\bf e}_6},X)$, $I_4(G^{{\bf e}_5-{\bf e}_6},X)$, $I_4(G^{{\bf e}_6-{\bf e}_5},X)$ are also trivial. 
On the other hand, 
\[
I_{4}(d(G,v_6),X) = \langle x_6,x_{6^1}, I_4(G,X)|_{x_6=0}\rangle=\langle x_6,x_{6^1}, 2,x_3, x_4, x_5, x_1x_2+1 \rangle,
\] 
\begin{eqnarray*}
I_5(d(G,v_6),X) &\!\!\!\!\!\!=\!\!\!\!\!\!& \langle x_3x_6,x_3x_{6^1},x_4x_6,x_4 x_{6^1}, x_3x_5,x_4x_5,x_6(x_1x_2+1),x_{6^1}(x_1x_2+1),x_6(x_2x_5-x_5-2),\\
&& 
x_{6^1}(x_2x_5-x_5-2), x_6(x_1x_5+x_5+2x_1), x_{6^1}(x_1x_5+x_5+2x_1), x_6x_{6^1}(x_1-1)-2(x_6+x_{6^1}),\\
&& 
x_6x_{6^1}(x_2+1)+2x_2(x_6+x_{6^1}), (x_6x_{6^1}+x_6+x_{6^1})(x_5+1)+(x_6+x_{6^1}), x_3x_4+2x_3+2x_4,\\
&&
x_3(x_1x_2-1), x_4(x_1x_2-1), x_1x_2x_5+2x_1x_2+2x_2x_5-x_5-2\rangle\\
&\!\!\!\!\!\!\subsetneq \!\!\!\!\!\!&\langle x_6,x_{6^1}, I_5(G,X_G)|_{x_6=0}\rangle, \text{ and}
\end{eqnarray*}
\begin{eqnarray*}
I_{5}(G^{{\bf e}_6-{\bf e}_5},X) &\!\!\!\!\!\!=\!\!\!\!\!\!& \langle 2(x_{5^1}\!+\!1), 2(x_5\!+\!1), x_5x_{5^1}\!-\!1, x_6\!+\!x_{6^1},x_6(x_1\!-\!1), 
x_6(x_2\!+\!1), x_6(x_5\!+\!1), \\
&& x_6(x_{5^1}\!+\!1),x_3, x_4,x_1x_2\!-\!2x_2\!-\!1 \rangle\\
&\!\!\!\!\!\!\subsetneq\!\!\!\!\!\!&\langle x_5+1,x_{5^1}+1,x_6,x_{6^1}, I_5(G,X)|_{\{x_6=0, x_5=-1\}}\rangle.
\end{eqnarray*}
Note that $I_5(G,X)|_{\{x_6=0, x_5=-1\}}=\langle x_3, x_4,x_1x_2-2x_2-1 \rangle$, and $x_5x_{5^1}-1=(x_5+1)(x_{5^1}+1)-(x_5+1)-(x_{5^1}+1)$.
\end{Example}

As a consequence of Theorem~\ref{teo:rd}, we get the following bound for the algebraic co-rank of a signed multidigraph with twins.
\begin{Corollary}\label{coro:bound}
If $G$ is a signed multidigraph with $n$ vertices, then 
$\gamma_{\mathcal{P}}(G^{\bf d})=\gamma_{\mathcal{P}}(G^{{\rm supp}({\bf d})})\leq n$ for all ${\bf d}\in \mathbb{Z}^{n}$, where 
\[
{\rm supp}({\bf d})_v=
\begin{cases}
	-1 & \text{ if } {\bf d}_v < 0,\\
	1 & \text{ if } {\bf d}_v > 0,\\
	0 & \text{ otherwise.}
\end{cases}
\]
\end{Corollary}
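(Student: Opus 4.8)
The plan is to split the statement into its two assertions: the equality $\gamma_{\mathcal{P}}(G^{\bf d})=\gamma_{\mathcal{P}}(G^{{\rm supp}({\bf d})})$, which is essentially a repackaging of Theorem~\ref{teo:rd}, and the inequality $\gamma_{\mathcal{P}}(G^{{\rm supp}({\bf d})})\leq n$, which requires a direct estimate on the $(n+1)$-th critical ideal. Throughout I would use the fact that the critical ideals are nested, $I_{j+1}(H,X)\subseteq I_{j}(H,X)$, since every $(j+1)\times(j+1)$ minor expands along a row into $j\times j$ minors; consequently $\{j : I_j(H,X)\text{ is trivial}\}$ is an initial segment of the positive integers and $\gamma_{\mathcal{P}}(H)$ is precisely its largest element.

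For the equality, the crucial observation is that the vector $\phi({\bf d})$ of Theorem~\ref{teo:rd} depends only on the \emph{signs} of the entries of ${\bf d}$, so $\phi({\bf d})=\phi({\rm supp}({\bf d}))$. Applying the triviality criterion of Theorem~\ref{teo:rd} to both ${\bf d}$ and ${\rm supp}({\bf d})$, I obtain that for every $1\leq j\leq n$ the ideal $I_j(G^{\bf d},X)$ is trivial if and only if $I_j(G,X)|_{X=\phi({\bf d})}$ is trivial, if and only if $I_j(G^{{\rm supp}({\bf d})},X)$ is trivial. Hence the two graphs share exactly the same trivial critical ideals in the range $1\leq j\leq n$; once the bound below shows that both algebraic co-ranks are at most $n$, the largest trivial index agrees and the equality follows.

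For the bound, writing $\delta={\rm supp}({\bf d})$, I would show directly that $I_{n+1}(G^{\delta},X)$ is a proper ideal. Consider the ring homomorphism $\psi$ that evaluates $x_{v^i}=0$ on every vertex of a duplicated class ($\delta_v=1$) and $x_{v^i}=-1$ on every vertex of a replicated class ($\delta_v=-1$), while leaving the variables of the unchanged vertices untouched. Under $\psi$ all rows inside a single duplicated class become identical (the intra-class block becomes zero and the remaining entries agree because twins share all neighbours), and likewise all rows inside a single replicated class become identical (the intra-class block becomes constant with every entry equal to $-1$). The vertices of $G^{\delta}$ are thereby partitioned into exactly $n$ classes, one per vertex of $G$. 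Any $(n+1)\times(n+1)$ submatrix of $L(G^{\delta},X)$ uses $n+1$ rows, so by the pigeonhole principle two of them lie in the same class; after applying $\psi$ these two rows coincide, hence the image of the corresponding minor vanishes. Therefore $I_{n+1}(G^{\delta},X)\subseteq\ker\psi$, and since $\psi(1)=1\neq 0$ the ideal $I_{n+1}(G^{\delta},X)$ is nontrivial, giving $\gamma_{\mathcal{P}}(G^{\delta})\leq n$ (when $\delta={\bf 0}$ the graph has only $n$ vertices and the bound is immediate). The very same specialization applied to $G^{\bf d}$ yields $\gamma_{\mathcal{P}}(G^{\bf d})\leq n$ as well, which is exactly what the previous paragraph needed.

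The two ideal-theoretic bookkeeping steps are routine; the only delicate point is the bound, and the main obstacle is that Theorem~\ref{teo:rd} controls $I_j$ only for $j\leq n$ and says nothing about $I_{n+1}$. The homomorphism $\psi$ is engineered precisely to collapse each twin class to a single repeated row, so the crux is to verify both that the chosen evaluations ($0$ for duplicates, $-1$ for replicates) genuinely make intra-class rows equal and that the number of classes is exactly $n$, forcing a repeated row inside every $(n+1)\times(n+1)$ minor.
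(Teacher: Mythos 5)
Your proof is correct, but it reaches the result by a partly different route than the paper. For the equality, both arguments hinge on the observation that $\phi({\bf d})=\phi({\rm supp}({\bf d}))$ combined with the triviality criterion of Theorem~\ref{teo:rd}; the paper, however, treats the two directions asymmetrically, deducing $\gamma_{\mathcal{P}}(G^{\bf d})\geq\gamma_{\mathcal{P}}(G^{\delta})$ from induced-subgraph monotonicity of critical ideals (\cite[Proposition 3.3]{critical}) and invoking Theorem~\ref{teo:rd} only for the reverse inequality, whereas you apply the criterion symmetrically to ${\bf d}$ and to $\delta$ and then correctly note that this pins down the two co-ranks only after both are known to be at most $n$. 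The genuine divergence is in that bound: the paper obtains $I_{n+1}(G^{\bf d},X)\subseteq\bigl\langle\{x_{v^i}\,:\,{\bf d}_v\geq 1\},\,\{x_{v^i}+1\,:\,{\bf d}_v\leq -1\}\bigr\rangle$ by pushing the containment of Theorem~\ref{teo:rd} to the index $j=n+1$, where $I_{n+1}(G,X)=\langle 0\rangle$, while you prove the very same containment from scratch: the ideal on the right-hand side is exactly $\ker\psi$ for your evaluation $\psi$, and your pigeonhole/repeated-row argument shows that every $(n+1)\times(n+1)$ minor of the generalized Laplacian is annihilated by $\psi$. Your version is more self-contained and sidesteps the slight awkwardness that Theorem~\ref{teo:rd} is stated only for $1\leq j\leq n$; it also makes the combinatorial mechanism explicit (there are only $n$ twin classes, and the rows of a class collapse under the evaluation $0$ for duplicated and $-1$ for replicated vertices). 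What the paper's route buys is brevity, since it recycles already-proved machinery instead of re-running a matrix argument; both proofs are sound.
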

\begin{proof}
Let ${\bf d}\in \mathbb{Z}^{n}$, $\delta={\rm supp}({\bf d})$, and $\gamma=\gamma_{\mathcal{P}}(G^{\delta})$.
That is, $I_{\gamma}(G^\delta,X)=\langle 1_{\mathcal{P}}\rangle$ and $I_{\gamma+1}(G^\delta,X)\neq \langle 1_{\mathcal{P}}\rangle$.
Since $G^\delta$ is an induced subdigraph of $G^{\bf d}$, by~\cite[Proposition 3.3]{critical} $\gamma_{\mathcal{P}}(G^{\bf d})\geq \gamma$.
Now, we need to prove that $\gamma_{\mathcal{P}}(G^{\bf d})\leq \gamma$, that is, we need to prove that $I_{\gamma+1}(G^{\bf d},X)\neq \langle 1_{\mathcal{P}}\rangle$.

Since $I_{\gamma+1}(G^{\delta}, X)$ is non-trivial and $\phi({\delta})=\phi({\bf d})$, applying Theorem~\ref{teo:rd} to $G$ and $\delta$, 
\[
I_{\gamma+1}(G,X)|_{X=\phi({\delta})}=I_{\gamma+1}(G,X)|_{X=\phi({\bf d})} \neq \langle 1_{\mathcal{P}}\rangle.
\]
Therefore, applying Theorem~\ref{teo:rd} to $G$ and ${\bf d}$ we get that $I_{\gamma+1}(G^{\bf d},X) \neq \langle 1_{\mathcal{P}}\rangle$.

Finally, since $I_{n+1}(G,X)=\langle 0 \rangle$,
\[
I_{n+1}(G^{\bf d}, X)\subseteq \langle \{x_v, \ldots, x_{v^{{\bf d}_v}}\, : \, {\bf d}_v\geq 1\},\{x_{v}+1,\ldots, x_{v^{-{\bf d}_v}}+1\, : \, {\bf d}_v\leq -1\}\rangle \neq \langle 1\rangle
\]
and we get that $\gamma_{\mathcal{P}}(G^{\bf d})\leq n$ for all ${\bf d}\in \mathbb{Z}^{n}$.
\end{proof}

Corollary~\ref{coro:bound} tell us that if we begin with a given graph $G$, then after several duplications (or replications) of a vertex of $G$ its algebraic co-rank can be increased. 
However, in certain point its algebraic co-rank stabilizes.

Next example shows that the upper bound given in Corollary~\ref{coro:bound} is tight.
Moreover, if $\mathrm{det}(L(G,X))|_{X=\phi({\delta})}\in \mathbb{Z}\setminus 0$, then $\gamma_{\mathcal{\mathbb{Z}}}(G^{\delta})= |V(G)|$.
Therefore for any graph $G$, there exists $\delta \in \{1,-1\}^{V(G)}$ such that $\gamma_{\mathcal{\mathbb{Z}}}(G^{\delta})= |V(G)|$.
\begin{Example}\label{example:completa}
Let $K_n$ be the complete graph with $n\geq 2$ vertices.
By \cite[Theorem 3.15]{critical}, we have that $\gamma_{\mathcal{P}}(K_n)= 1$ 
and $I_n(K_n,X)=\langle P\rangle$, where
\[
P=\prod_{j=1}^{n} (x_j+1) - \sum_{i=1}^n \prod_{j\neq i} (x_j+1).
\]
Since the evaluation of $P$ at $\{x_1=0, \cdots, x_{n-1}=0, x_n=-1\}$ is equal to $-1$, by Theorem~\ref{teo:rd} and Corollary~\ref{coro:bound},
$\gamma_{\mathcal{P}}(K_n^{\bf d})=n$ for any ${\bf d}\in \mathbb{Z}^n$ such that ${\bf d}_i\geq 1\text{ for all }1\leq i\leq n-1$ and ${\bf d}_n\leq -1$.
On the other hand, by \cite[Theorem 3.16]{critical}
\[
I_{n-1}(K_n, X)=\left\langle \left\{\prod_{i\in \mathcal{I}} (x_i+1) \, :\, \mathcal{I}\subseteq [n] \text{ and } |\mathcal{I}|=n-2\right\}\right\rangle.
\]
Since $I_{n-1}(K_n, X)_{\{x_i=0\, :\, i\in[n-1]\}}=\langle 1\rangle$, by Theorem~\ref{teo:rd} and Corollary~\ref{coro:bound},
$\gamma_{\mathcal{P}}(K_n^{\bf d})=n-1$ for any ${\bf d}\in \mathbb{Z}^{n}$ such that ${\bf d}_i\geq 1$ for all $1\leq i\leq n-1$.
Note that the graph $K_n^{\bf d}$ for some ${\bf d}\in \mathbb{Z}^{n}$ such that ${\bf d}_i\geq 1$ if and only if $1\leq i\leq j$ for some $1\leq j \leq n-2$ 
is equal to the graph $K_{j+1}^{\bf d'}$ with ${\bf d'}\in \mathbb{Z}^{j+1}$ such that ${\bf d}_i={\bf d}'_i$ for all $1\leq i \leq j$ and ${\bf d}'_{j+1}\leq -1$.
\end{Example}

Corollary~\ref{coro:bound} might be used to construct families of graphs with a fixed algebraic co-rank.
For instance, given a graph $G$ and $\delta \in \{0,1,-1\}^{V(G)}$, let 
\[
\mathcal{T}_{\delta}(G)=\{G^{\bf d}: {\bf d}\in \mathbb{Z}^{|V|} \text{ such that } {\rm supp}({\bf d})=\delta\}.
\]
Then Corollary~\ref{coro:bound} says that the algebraic co-rank of any graph in $\mathcal{T}_{\delta}(G)$ is equal to $k=\gamma_{\mathcal{P}}(G^{\delta})$.
That is, $\mathcal{T}_{\delta}(G)$ is an infinite set of graphs, all of them with algebraic co-rank equal to $k$.
Moreover, these families of graphs are very useful to classify graphs with algebraic co-rank less or equal to a fixed integer.
For instance, in~\cite[Theorem 4.2]{g2} it was proved that a simple connected graph has algebraic co-rank less than or equal to  $2$ 
if and only if it is an induced subgraph of a graph in one of the two families of graphs, $\mathcal{T}_{(1,1,1)}(K_3)$ and $\mathcal{T}_{(-1,1,-1)}(P_3)$, where $K_3$ is the complete graph with three vertices and $P_3$ is the path with three vertices.

On the other hand, one crucial step in the classification of the graphs with algebraic co-rank less than or equal to  $2$
was the use of the concept of a minimal $k$-forbidden graph,
which is a graph with algebraic co-rank greater than or equal to  $k+1$ and it is minimal under induced subgraphs.
In light of the Corollary~\ref{coro:bound}, we have that a minimal $k$-forbidden graph does not have more than three vertices which are twins each other.
Moreover, we conjecture that only a finite number of minimal $k$-forbidden graphs and a finite set $\mathcal{G}$ of pairs $(G,\delta)$ 
of graphs and $\{0,1,-1\}$-vectors exist, such that any graph with algebraic co-rank less than or equal to  $k$ 
is an induced subgraph of a graph in $\bigcup_{(G, \delta)\in \mathcal{G}} \mathcal{T}_{\delta}(G)$.
That is, graphs with twins play an important role on these classification problems.
This opens the question on the distribution of the algebraic co-rank of graphs with twins. 
In~\cite{trees}, a formula for the algebraic co-rank of a tree in terms of its $2$-matching number was given.
Moreover, it can be proved the following.

\begin{Proposition}
If $T$ is a twin-free simple tree, then $\gamma_{\mathcal{P}}(T) \geq \lceil \frac{n+2}{2} \rceil$.
\end{Proposition}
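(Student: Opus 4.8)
The plan is to reduce the statement to an elementary degree count by invoking the combinatorial formula for the algebraic co-rank of a tree proved in \cite{trees}. That result expresses $\gamma_{\mathcal{P}}(T)$ as the $2$-matching number $\nu_2(T)$, i.e. the maximum number of edges of a subgraph of $T$ of maximum degree at most two; since $T$ is a forest such a subgraph is a linear forest (a vertex-disjoint union of paths), so $\nu_2(T)=n-p(T)$, where $p(T)$ is the minimum number of paths in a path partition of $V(T)$. Thus it suffices to prove two combinatorial estimates: $\nu_2(T)\ge n+1-\ell(T)$ for every tree, and $\ell(T)\le \lfloor n/2\rfloor$ for every twin-free tree, where $\ell(T)$ denotes the number of leaves. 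Combining them gives $\gamma_{\mathcal{P}}(T)=\nu_2(T)\ge n+1-\lfloor n/2\rfloor=\lceil n/2\rceil+1=\lceil (n+2)/2\rceil$.

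First I would establish $\nu_2(T)\ge n+1-\ell$ for an arbitrary tree. Using the identity $\sum_{v}\max(\deg(v)-2,0)=\ell-2$, which follows at once from $\sum_v \deg(v)=2(n-1)$, I would run the greedy procedure that repeatedly deletes one edge incident to a vertex of degree at least three. Each deletion lowers the total excess $\sum_v\max(\deg(v)-2,0)$ by at least one, so after at most $\ell-2$ deletions every vertex has degree at most two. The remaining subgraph is then a linear forest with at least $(n-1)-(\ell-2)=n+1-\ell$ edges, which proves the bound.

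Next I would pin down the twins of a tree. If $u\neq v$ have the same neighborhood then they must be non-adjacent (with open neighborhoods, $u\sim v$ would force $u\in N(u)$; with the closed-neighborhood reading used for true twins, it would force the component to be $P_2$), and sharing two neighbors would create a cycle; hence $u$ and $v$ share a single neighbor and are both leaves. Consequently, for $n\ge 3$ a tree is twin-free if and only if distinct leaves have distinct neighbors, so the map sending a leaf to its unique neighbor is an injection from the set of leaves into the set of non-leaves (a support vertex has degree at least two once $n\ge 3$). This yields $\ell\le n-\ell$, i.e. $\ell\le \lfloor n/2\rfloor$. I would also observe that the only trees on at most three vertices are $P_1,P_2,P_3$, of which $P_2$ and $P_3$ are not twin-free, so every twin-free tree to which the bound applies has $n\ge 4$ and no degenerate case needs separate treatment.

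The two estimates then combine immediately to give the claim. The only genuinely external input is the formula $\gamma_{\mathcal{P}}(T)=\nu_2(T)$ from \cite{trees}; once it is invoked the rest is a two-line count, so the point to get right is the translation to the $2$-matching number together with the observation that, although the exact value $\nu_2(T)=n+1-\ell$ may fail (it can be strictly larger, for instance for two stars joined by an edge), the \emph{inequality} $\nu_2(T)\ge n+1-\ell$ always holds and is precisely what the leaf bound consumes. I expect no serious obstacle beyond recording the cited formula in the normalization used here.
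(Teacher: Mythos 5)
Your proof is correct, but it takes a genuinely different route from the paper's. The paper argues by induction on the number of vertices: it takes $P_4$ (algebraic co-rank three) as the smallest twin-free tree and then invokes the edge-deletion monotonicity $\gamma_{\mathcal{P}}(T)\geq\gamma_{\mathcal{P}}(T-e)$ from \cite{trees}. You instead invoke a different result of that same reference, the exact formula $\gamma_{\mathcal{P}}(T)=\nu_2(T)$, and reduce everything to two elementary counts: the bound $\nu_2(T)\geq n+1-\ell(T)$ for every tree (via the excess identity $\sum_v \max(\deg(v)-2,0)=\ell-2$ and greedy edge deletion, which is correct since each deletion at a vertex of degree at least three lowers the excess by at least one), and the bound $\ell(T)\leq\lfloor n/2\rfloor$ for twin-free trees (correct, since twins in a tree on $n\geq 3$ vertices are exactly pairs of leaves sharing a support vertex, so the leaf-to-support map is injective into the non-leaves). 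The arithmetic $n+1-\lfloor n/2\rfloor=\lceil (n+2)/2\rceil$ then closes the argument. What your route buys: it is non-inductive, it gives the stronger intermediate estimate $\gamma_{\mathcal{P}}(T)\geq n+1-\ell(T)$ (better than the stated bound whenever the tree has few leaves), and it sidesteps a real subtlety that the paper's one-line induction glosses over, namely that deleting an edge of a twin-free tree need not leave twin-free components, nor components on at least four vertices, so the paper's inductive step actually requires more care than its statement suggests. What the paper's route buys is brevity, given the cited lemmas. One shared caveat: as literally stated the proposition fails for the one-vertex tree, which is vacuously twin-free yet has $\gamma_{\mathcal{P}}(K_1)=0<2$; the paper implicitly excludes it by declaring $P_4$ the smallest twin-free graph, and your enumeration of trees on at most three vertices overlooks $P_1$ in exactly the same way, so you should say explicitly that the trivial tree is excluded.
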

\begin{proof}
It follows by induction on the number of vertices of $T$. 
The smallest twin-free graph is the path with four vertices,
which has algebraic co-rank equal to three, and the rest follows by using that $\gamma_{\mathcal{P}}(T)\geq \gamma_{\mathcal{P}}(T-e)$ for any edge $e$ of $T$, 
see~\cite[Lemma 2.4 and Theorem 3.8]{trees}.
\end{proof}
This result and an intensive computational search of graphs with less than or equal to  nine vertices lead to the following conjecture.

\begin{Conjecture}\label{conj:twin3}
If $G$ is twin-free, then $\gamma_{\mathcal{P}}(G) \geq \lfloor \frac{n}{2} \rfloor$.
\end{Conjecture}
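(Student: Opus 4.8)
The plan is to recast the bound as a single triviality statement and then reduce it to a purely combinatorial problem. Since a Laplace expansion shows that every $(j+1)\times(j+1)$ minor of $L(G,X)$ lies in the ideal generated by the $j\times j$ minors, the critical ideals form a descending chain $I_1(G,X)\supseteq I_2(G,X)\supseteq\cdots$, and hence $\gamma_{\mathcal{P}}(G)\ge\lfloor n/2\rfloor$ is \emph{equivalent} to the single assertion that $I_{\lfloor n/2\rfloor}(G,X)=\langle 1_{\mathcal P}\rangle$. A clean sufficient condition for this triviality is the existence of a \emph{constant unit minor}: two disjoint vertex sets $R,C$ with $|R|=|C|=\lfloor n/2\rfloor$ such that $\det\big(L(G,X)[R,C]\big)=\pm 1_{\mathcal P}$. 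Because $R\cap C=\emptyset$ forces every entry of $L(G,X)[R,C]$ to be one of the constants $0$ or $-1$ (the graph being simple), this determinant equals $\pm 1$ exactly when the bipartite subgraph of $G$ joining $R$ to $C$ has a \emph{unique} perfect matching: the Leibniz expansion then has a single nonzero term, equal to $\pm 1_{\mathcal P}$.

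This reduces the conjecture to the following combinatorial statement, which I would attempt by induction on $n$: \textbf{(Matching Lemma)} every twin-free simple graph on $n$ vertices contains disjoint sets $R,C$ of size $\lfloor n/2\rfloor$ such that the bipartite subgraph of $G$ between $R$ and $C$ has a unique perfect matching. The base case $n=4$ is handled by inspection, the smallest twin-free graph being $P_4$, for which $R=\{1,4\}$ and $C=\{2,3\}$ work. For the inductive step I would delete a carefully chosen vertex $v$, invoke monotonicity of the algebraic co-rank under induced subgraphs (\cite[Proposition 3.3]{critical}, as used for the preceding Proposition) when $G-v$ remains twin-free, and patch $v$ back into the matching using the fact that in a twin-free graph $v$ is distinguished from every other vertex by at least one neighbor. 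Corollary~\ref{coro:bound} and Theorem~\ref{teo:rd} would be used to dispose of the degenerate configurations in which a short induction would otherwise create twins, by recognizing the offending subgraph as some $G'^{\,\delta}$ and computing its algebraic co-rank through an evaluation of the critical ideals of $G'$.

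The main obstacle is twofold. First there is a \emph{parity gap}: deleting a single vertex only gives $\gamma_{\mathcal{P}}(G)\ge\gamma_{\mathcal{P}}(G-v)\ge\lfloor (n-1)/2\rfloor$, which loses exactly one unit when $n$ is even, so the induction must gain an extra step; and unlike the tree case of the preceding Proposition, which relies on the edge-deletion monotonicity $\gamma_{\mathcal{P}}(T)\ge\gamma_{\mathcal{P}}(T-e)$ special to trees, no such slack is available for general graphs. Second, and more seriously, deleting a vertex can \emph{create} twins in $G-v$ (two vertices differing only in their adjacency to $v$ become twins), so twin-freeness is not preserved under the naive reduction; controlling this forces one to delete vertices in twin-creating pairs and to prove that a twin-free graph always admits such a reduction without destroying the matching structure. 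I expect the genuinely hard point to be the Matching Lemma itself: the unique-perfect-matching formulation may be too rigid, and a complete argument will likely have to work with the full ideal $I_{\lfloor n/2\rfloor}(G,X)$, exhibiting a polynomial combination of several $\lfloor n/2\rfloor$-minors equal to $1_{\mathcal P}$ rather than a single constant unit minor. It is precisely this combinatorial core that remains elusive, which is why the statement is recorded here only as a conjecture.
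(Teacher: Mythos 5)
The statement you set out to prove is Conjecture~\ref{conj:twin3}: the paper itself offers \emph{no proof} of it, only supporting evidence, namely the Proposition that twin-free simple trees satisfy $\gamma_{\mathcal{P}}(T) \geq \lceil \frac{n+2}{2} \rceil$ (proved by induction using the edge-deletion monotonicity $\gamma_{\mathcal{P}}(T)\geq\gamma_{\mathcal{P}}(T-e)$, which is special to trees) together with an exhaustive computational search over graphs with at most nine vertices; the graph of Figure~\ref{fig:2} shows the proposed bound would be tight. So there is no hidden proof against which your attempt should be measured, and your own closing assessment is accurate: what you have is a reduction strategy, not a proof. The reduction does start correctly — the determinantal ideals form a descending chain $I_1(G,X)\supseteq I_2(G,X)\supseteq\cdots$, so the conjecture is equivalent to triviality of $I_{\lfloor n/2\rfloor}(G,X)$, and a unit minor on disjoint row/column sets $R,C$ is a legitimate sufficient certificate for that triviality.

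Two concrete defects remain beyond the admitted incompleteness. First, your ``exactly when'' is false: a $\{0,-1\}$-matrix can have determinant $\pm 1$ while the associated bipartite graph has many perfect matchings; for instance the biadjacency matrix $\begin{pmatrix}1&1&1\\ 1&1&0\\ 1&0&1\end{pmatrix}$ admits three perfect matchings yet has determinant $-1$. Only the implication (unique perfect matching $\Rightarrow$ unit determinant) holds — fortunately the direction your strategy uses — but this means your Matching Lemma is strictly stronger than what triviality of $I_{\lfloor n/2\rfloor}(G,X)$ requires, and it is not known to be true: the triviality may only be witnessed by a polynomial combination of several minors, possibly minors meeting the diagonal, as you yourself concede. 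Second, the inductive step is a plan, not an argument: deleting a vertex loses one unit of the bound when $n$ is even and can create twins in $G-v$, and your appeal to Theorem~\ref{teo:rd} and Corollary~\ref{coro:bound} to dispose of the twin-creating cases points the wrong way. Those results identify $\gamma_{\mathcal{P}}(G^{\bf d})$ with $\gamma_{\mathcal{P}}(G^{\mathrm{supp}({\bf d})})$ and cap it above by $|V(G)|$; they give equalities and \emph{upper} bounds for graphs with twins, whereas the induction hypothesis would need a \emph{lower} bound of $\lfloor (n-1)/2\rfloor$ for a graph $G-v$ that is no longer twin-free — exactly the regime where the conjecture itself says the co-rank can be small. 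The combinatorial core is thus untouched, which is consistent with the statement remaining a conjecture in the paper.
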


Note that this lower bound for the algebraic co-rank would be tight because the graph with seven vertices given 
in Figure~\ref{fig:2} is twin-free and has algebraic co-rank equal to three.
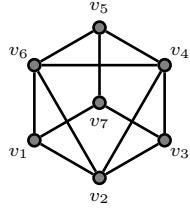
\begin{figure}[h!]
\begin{center}
\begin{tikzpicture}[line width=1pt, scale=1]
\tikzstyle{every node}=[inner sep=0pt, minimum width=4.5pt, circle] 
\draw (30:1) node[draw, fill=gray, label=30:{\tiny $v_4$}] (2) {};
\draw (150:1) node[draw, fill=gray, label=150:{\tiny $v_6$}] (3) {};
\draw (270:1) node[draw, fill=gray, label=270:{\tiny $v_2$}] (7) {};
\draw (330:1) node[draw, fill=gray, label=330:{\tiny $v_3$}] (5) {};
\draw (210:1) node[draw, fill=gray, label=210:{\tiny $v_1$}] (6) {};
\draw (90:1) node[draw, fill=gray, label=90:{\tiny $v_5$}] (1) {};
\draw (0:0) node[draw, fill=gray, label=270:{\tiny $v_7$}] (4) {};
\draw (1) -- (4) -- (5) -- (2) -- (1) -- (3) -- (6) -- (4);
\draw (5) -- (7) -- (2) -- (3) -- (7) -- (6);
\end{tikzpicture}
\end{center}
\caption{A simple graph with seven vertices and algebraic co-rank equal to three.}
\label{fig:2}
\end{figure}

This conjecture is equivalent to the following: if $\gamma_{\mathcal{P}}(G) < \lfloor \frac{n}{2} \rfloor$, then $G$ have at least a pair of twin vertices.
Therefore, if Conjecture~\ref{conj:twin3} is true, then graphs with a low algebraic co-rank have twins and twin-free graphs have an higher algebraic co-rank.

Given $k\geq 1$, let
\[
\Gamma_{\leq k}=\{G\, :\, G \text{ is a simple connected graph with } \gamma(G)\leq k\}.
\]
Then $\Gamma_{\leq 1}=\mathcal{T}_{(-1)}^*(K_1)$, where $\mathcal{T}_{\delta}^*(G)$ 
denotes the set of induced subgraphs of one graph in $\mathcal{T}_{\delta}(G)$.
As we mentioned before, in~\cite{g2} it was proved that 
\[
\Gamma_{\leq 2}=\mathcal{T}_{(1,1,1)}^*(K_3)\cup \mathcal{T}_{(-1,1,-1)}^*(P_3)
\]
and in~\cite{g3} it was given a similar result about $\Gamma_{\leq 3}$.
In general given a fixed constant $k$, we expect that $\Gamma_{\leq k}$ has a similar classification.
Next conjecture goes in this sense.

\begin{Conjecture}\label{conj:finite}
If $k\geq 1$, then 
\[
\Gamma_{\leq k}=\bigcup_{(G, \delta)\in \mathcal{G}} \mathcal{T}_{\delta}^*(G) 
\]
for some finite set $\mathcal{G}$ of pairs $(G, \delta)$ with $G$ being a simple graph and $\delta \in \{0,1,-1\}^{V(G)}$.
\end{Conjecture}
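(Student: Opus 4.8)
The plan is to reduce Conjecture~\ref{conj:finite} to a single finiteness statement about \emph{twin-quotients}, the genuine obstacle being a lower bound for the algebraic co-rank in terms of the number of twin classes. The starting point is that $\Gamma_{\le k}$ is hereditary: since $I_j(H,X)\subseteq I_j(G,X)$ for an induced subgraph $H$ of $G$, one has $\gamma_{\mathcal P}(H)\le\gamma_{\mathcal P}(G)$, so every connected induced subgraph of a member of $\Gamma_{\le k}$ is again in $\Gamma_{\le k}$. Given $G\in\Gamma_{\le k}$, the relation ``$u$ and $v$ are twins'' partitions $V(G)$ into classes, each of which is a clique (true twins, recorded by $-1$) or an independent set (false twins, recorded by $1$); deleting all but one representative from every class yields the twin-quotient $H$, an induced subgraph of $G$, together with a pattern $\delta\in\{0,1,-1\}^{V(H)}$ and a vector ${\bf d}$ with ${\rm supp}({\bf d})=\delta$ such that $G=H^{\bf d}$. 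Twin deletion preserves connectivity (a path through a deleted twin reroutes through its representative), so $H$ is connected, and by construction $H$ equals the twin-quotient of its own expansion $H^{\delta}$, i.e.\ the produced pair is automatically \emph{reduced}. By Corollary~\ref{coro:bound}, $\gamma_{\mathcal P}(G)=\gamma_{\mathcal P}(H^{\bf d})=\gamma_{\mathcal P}(H^{\delta})$, and since $H$ and $H^{\delta}$ are induced subgraphs of $G$ one gets $\gamma_{\mathcal P}(H)\le\gamma_{\mathcal P}(H^{\delta})=\gamma_{\mathcal P}(G)\le k$.

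With the reduction in place I would assemble the finite family. Let
\[
\mathcal G=\{(H,\delta)\;:\;H\text{ is a connected reduced twin-quotient with }\gamma_{\mathcal P}(H^{\delta})\le k\}.
\]
The inclusion $\bigcup_{(H,\delta)\in\mathcal G}\mathcal T_\delta^{*}(H)\subseteq\Gamma_{\le k}$ is immediate from Corollary~\ref{coro:bound}: every graph in $\mathcal T_\delta(H)$ has algebraic co-rank $\gamma_{\mathcal P}(H^{\delta})\le k$, and passing to connected induced subgraphs only lowers $\gamma_{\mathcal P}$. The reverse inclusion is exactly the twin-quotient construction of the first paragraph, which exhibits each $G\in\Gamma_{\le k}$ as a member of $\mathcal T_\delta(H)$ for the reduced pair $(H,\delta)\in\mathcal G$ produced from $G$. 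The known cases $\Gamma_{\le 1}=\mathcal T^{*}_{(-1)}(K_1)$ and $\Gamma_{\le 2}=\mathcal T^{*}_{(1,1,1)}(K_3)\cup\mathcal T^{*}_{(-1,1,-1)}(P_3)$ of \cite{g2} fit this scheme exactly, and they give a useful check that the correct base graphs are the twin-quotients $K_3$ and $P_3$ (which themselves carry twins) rather than their fully reduced twin-free cores.

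Everything then rests on proving that $\mathcal G$ is finite, i.e.\ that a graph with $\gamma_{\mathcal P}\le k$ has a bounded number of twin classes; equivalently, that the algebraic co-rank grows with the number of classes of a reduced twin-quotient. This is the analogue of Conjecture~\ref{conj:twin3} for quotients, and it is the step I expect to be hard. One cannot simply invoke Conjecture~\ref{conj:twin3}, because iterating the twin-quotient down to a twin-free graph destroys the bound: the complete multipartite graphs $K_{2,\dots,2}$ on $m$ parts have a single vertex as their twin-free core, yet by Example~\ref{example:completa} one has $\gamma_{\mathcal P}(K_{2,\dots,2})=m-1$, so the co-rank is controlled by the number of classes and not by the twin-free core. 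This example is also the extremal guide: it suggests the target inequality $\gamma_{\mathcal P}(H^{\delta})\ge c\,|V(H)|$ for reduced pairs $(H,\delta)$, which would force $|V(H)|\le k/c$ and hence finiteness of $\mathcal G$. I would attempt such a bound by induction on $|V(H)|$, using the monotonicity $\gamma_{\mathcal P}(H-v)\le\gamma_{\mathcal P}(H)$ together with the minor description of the critical ideals in Proposition~\ref{eq:g} and Theorem~\ref{teo:rd} to show that a reduced quotient with many classes forces a nontrivial critical ideal at a high index; the ``reduced'' hypothesis is precisely what prevents replications from collapsing distinct classes during the induction.

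Finally, the same quantitative step would settle the companion statement that only finitely many minimal $k$-forbidden graphs exist: Corollary~\ref{coro:bound} already forces every twin class of a minimal $k$-forbidden graph to have bounded size, so a uniform bound on the number of twin classes bounds their order as well. Thus the whole program hinges on the one inequality isolated above---a lower bound for $\gamma_{\mathcal P}$ linear in the number of twin classes---which remains open and which I regard as the main obstacle.
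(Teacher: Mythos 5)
Your twin-quotient reduction is correct as far as it goes: the partition of $V(G)$ into twin classes (each a clique of mutual true twins or an independent set of mutual false twins), the identity $G=H^{\bf d}$ for the quotient $H$, the use of Corollary~\ref{coro:bound} to get $\gamma_{\mathcal P}(G)=\gamma_{\mathcal P}(H^{\bf d})=\gamma_{\mathcal P}(H^{\delta})$, and both inclusions between $\Gamma_{\le k}$ and $\bigcup_{(H,\delta)\in\mathcal G}\mathcal T^*_{\delta}(H)$ all hold. What you have actually established is that Conjecture~\ref{conj:finite} is \emph{equivalent} to the assertion that a graph of algebraic co-rank at most $k$ has a bounded number of twin classes. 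But that is a reformulation, not a proof: you explicitly leave this finiteness statement open, so the proposal proves neither the conjecture (which, to be fair, the paper does not prove unconditionally either) nor the substantive result the paper does obtain about it, namely Theorem~\ref{equivalence2}, that Conjecture~\ref{conj:twin3} implies Conjecture~\ref{conj:finite}.

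The concrete gap is your assertion that Conjecture~\ref{conj:twin3} cannot be leveraged because iterating twin-quotients down to a twin-free core destroys the bound (your $K_{2,\dots,2}$ example). The paper circumvents exactly this obstruction by \emph{not} iterating: it uses the modular decomposition, which presents any $G$ as a twin-free prime graph---whose order is bounded by $2k+1$ under Conjecture~\ref{conj:twin3}---with each vertex blown up by a cograph, and it then handles the cograph modules \emph{unconditionally} (Corollary~\ref{finitecograph}). For a cograph $C$ of co-rank at most $k$, a root-to-leaf path of length $h$ in the cotree produces an induced threshold graph $Th_h$ with $\gamma_{\mathcal P}(Th_h)\ge \lfloor h/2\rfloor$, so the cotree height is at most $2k+1$; and the out-degrees of the internal vertices of the pruned cotree are at most $k+1$, by additivity of $\gamma_{\mathcal P}$ over disjoint unions and the bound $\gamma_{\mathcal P}(\boxtimes_{i=1}^{l}H_i)\ge \gamma_{\mathcal P}(K_l^{(1,\dots,1)})=l-1$ from Example~\ref{example:completa}. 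This leaves only finitely many possible shapes, which is precisely the twin-class bound you could not reach. Note that your problematic example $K_{2,\dots,2}$ is a cograph, i.e., it lies exactly in the case the paper disposes of without invoking any conjecture. So the missing idea in your proposal is the separation of a graph into its prime core (where Conjecture~\ref{conj:twin3} applies) and its cograph modules (where cotree height and degree bounds apply); your suggested induction via Proposition~\ref{eq:g} and Theorem~\ref{teo:rd} is never carried out, and without it your attempt remains an equivalence between the conjecture and an unproven twin-class bound.
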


A weak version of Conjecture~\ref{conj:finite} is given by the next conjecture, 
which says that an infinite family of graphs $\{G_i\}_{i=1}^\infty$ with a bounded algebraic co-rank, and such that $G_i$ 
is a proper induced subgraph of $G_j$ for all $i<j$, is essentiality a set of the form $\mathcal{T}_{\delta}(G)$.
\begin{Conjecture}\label{conj:twin4}
If $\mathcal{G}=\{G_i\}_{i=1}^\infty$ is an infinite family of simple graphs such that $G_i$ is a proper induced subgraph of $G_j$ for all $i<j$, then either
\[
{\rm max}\{\gamma_{\mathcal P}(G_i)\}_{i=1}^\infty=\infty
\] 
or a graph $G$, a vector $\delta\in \{0,1,-1\}^{V(G)}$ and an integer $M\in \mathbb{N}$ such that $G_i\in \mathcal{T}_{\delta}(G)$ for all $i\geq M$ exist.
\end{Conjecture}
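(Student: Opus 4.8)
The plan is to prove the contrapositive form of the dichotomy: assuming the algebraic co-rank stays bounded along the chain, say $\gamma_{\mathcal{P}}(G_i)\leq k$ for all $i$ and some fixed $k$, I will exhibit the graph $G$, the vector $\delta$ and the threshold $M$. The central device is the \emph{twin-free reduction} $\rho(H)$ of a graph $H$: collapse each maximal class of mutually twin vertices to a single vertex, recording for each class whether it is a clique (true twins), an independent set (false twins), or a singleton; this produces a twin-free graph $\rho(H)$ together with a vector $\delta(H)\in\{0,1,-1\}^{V(\rho(H))}$, and by construction $H\in\mathcal{T}_{\delta(H)}(\rho(H))$. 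Since two non-twin vertices remain non-twin in any graph containing $H$ as an induced subgraph (a distinguishing vertex persists), distinct classes never merge and the reduction is well defined and canonical.

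First I would bound the size of the reduction. Choosing one representative per twin class exhibits $\rho(H)$ as an induced subgraph of $H$, so $\gamma_{\mathcal{P}}(\rho(H))\leq\gamma_{\mathcal{P}}(H)\leq k$ by monotonicity of the algebraic co-rank under induced subgraphs \cite{critical}. As $\rho(H)$ is twin-free, Conjecture~\ref{conj:twin3} gives $\lfloor |V(\rho(H))|/2\rfloor\leq k$, hence $|V(\rho(H))|\leq 2k+1$. Consequently each $G_i$ carries a reduction pair $(\rho(G_i),\delta(G_i))$ drawn from the \emph{finite} set of pairs consisting of a twin-free simple graph on at most $2k+1$ vertices together with a $\{0,1,-1\}$-vector on its vertices.

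Next I would upgrade finiteness to eventual stability by exploiting the chain structure. If $G_i$ is a proper induced subgraph of $G_{i+1}$, then two vertices of $G_i$ distinguished inside $G_i$ stay distinguished in $G_{i+1}$, so the twin partition of $G_{i+1}$ restricted to $V(G_i)$ refines that of $G_i$; counting parts (old classes may only split, new classes may only be added) shows that $|V(\rho(G_i))|$ is non-decreasing in $i$. Being non-decreasing and bounded by $2k+1$, it is eventually constant, say for $i\geq M_0$. Equality of successive counts forces that no old class splits and no genuinely new class appears, so the reduction graph $\rho(G_i)$ and the clique/independent-set type of each surviving class are literally constant for $i\geq M_0$; only the $0$-versus-$\pm1$ entries of $\delta(G_i)$ can still change, and each can flip (from singleton to non-singleton) at most once. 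After the last such flip, at some index $M\geq M_0$, both $\rho(G_i)$ and $\delta(G_i)$ are constant, say equal to $G$ and $\delta$, whence $G_i\in\mathcal{T}_{\delta}(G)$ for all $i\geq M$, as required.

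The decisive and currently conjectural step is the size bound $|V(\rho(H))|\leq 2k+1$, which I obtain only by invoking Conjecture~\ref{conj:twin3}; indeed the whole argument is essentially equivalent to the assertion that twin-free graphs of bounded algebraic co-rank have boundedly many vertices, so the genuine obstacle is to make that uniform bound unconditional (any growth estimate $\gamma_{\mathcal{P}}\geq f(n)$ with $f\to\infty$ on twin-free graphs would suffice). A secondary point worth emphasizing is that boundedness alone yields only that some reduction pair recurs infinitely often, not that it is eventually constant; bridging this gap is exactly the purpose of the monotonicity of the number of twin classes, and it is the single place where the hypothesis that the $G_i$ form an increasing chain, rather than an arbitrary bounded family, is actually used.
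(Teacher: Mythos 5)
Your reduction $\rho(H)$ is not twin-free in general, and this breaks the decisive step of your argument. Collapsing the maximal twin classes of a graph can create new twins: for the paw (a triangle $abc$ with a pendant vertex $d$ attached to $a$), the unique nontrivial twin class is $\{b,c\}$, and the quotient is the path $P_3$, whose two endpoints are false twins; likewise $\rho(C_4)=K_2$, whose two vertices are true twins. So $\rho$ is not idempotent, its image is not contained in the class of twin-free graphs, and Conjecture~\ref{conj:twin3} simply does not apply to $\rho(H)$: the bound $|V(\rho(H))|\leq 2k+1$, which you yourself identify as the crux, is unjustified. The difficulty is structural rather than cosmetic. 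Membership in $\mathcal{T}_{\delta}(G)$ encodes exactly one level of blow-up (each vertex of $G$ replaced by a clique or an independent set), whereas the twin-free graph one reaches by iterating $\rho$ sits at the bottom of a nested tower of blow-ups, i.e.\ blow-ups by arbitrary cographs, and for the iterated quotient the relation $H\in\mathcal{T}_{\delta}(\rho^{\infty}(H))$ fails. Thus what you actually need is a bound on the size of the \emph{one-step} quotient of a graph of bounded algebraic co-rank, and Conjecture~\ref{conj:twin3} alone, applied naively, does not give it.

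Bridging exactly this gap is what the paper's proof of Theorem~\ref{equivalence1} spends nearly all of its effort on: it uses the modular decomposition to write each $G_j$ as a maximal prime (hence essentially twin-free) subgraph, whose size is bounded by Conjecture~\ref{conj:twin3}, with each vertex blown up by a cograph module; it then proves, unconditionally, that cographs of bounded algebraic co-rank have bounded cotrees --- the height is bounded because a cotree of height $h$ forces a threshold graph $Th_h$ with $\gamma_{\mathcal P}(Th_h)\geq\lfloor h/2\rfloor$ as an induced subgraph, and the out-degrees are bounded using $\gamma_{\mathcal P}\bigl(\bigsqcup_{i=1}^{n}H_i\bigr)=\sum_{i=1}^{n}\gamma_{\mathcal P}(H_i)$ together with $\gamma_{\mathcal P}(\boxtimes_{i=1}^{l}H_i)\geq l-1$. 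Only after that does the finiteness of the possible quotient pairs $(G,\delta)$ follow. By contrast, your closing argument --- the monotonicity of the number of twin classes along the chain, which upgrades mere recurrence of a quotient pair to its eventual constancy --- is correct and is in fact a cleaner endgame than the paper's; but it cannot be reached until the size of the one-step quotient is controlled, and that control is precisely what is missing from your proposal.
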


There are families of graphs with unbounded algebraic co-rank.
For instance, if $\mathcal{G}=\{P_i\}_{i=1}^\infty$ where $P_i$ is the path with $i$ vertices, 
then $\gamma_{\mathcal P}(P_i)=i-1$ and therefore ${\rm max}\{\gamma_{\mathcal P}(P_i)\}_{i=1}^\infty=\infty$.

Now, we prove that Conjecture~\ref{conj:twin3} implies Conjectures~\ref{conj:finite} and~\ref{conj:twin4}.

\begin{Theorem}\label{equivalence1}
Conjecture~\ref{conj:twin3} implies Conjecture~\ref{conj:twin4}.
\end{Theorem}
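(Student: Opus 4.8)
The plan is to prove the dichotomy by assuming its first alternative fails, i.e.\ $K:=\sup_i\gamma_{\mathcal P}(G_i)<\infty$, and then producing a graph $G$, a vector $\delta$ and an index $M$ with $G_i\in\mathcal T_\delta(G)$ for all $i\ge M$. Throughout I would use three facts already available: monotonicity of the algebraic co-rank under induced subgraphs (so $\gamma_{\mathcal P}(G_i)$ is non-decreasing and bounded by $K$ along the chain), Corollary~\ref{coro:bound} (the co-rank is unchanged when the multiplicities of a blow-up are altered, as long as the support pattern $\delta$ is fixed), and Conjecture~\ref{conj:twin3} in its contrapositive form: a graph on $n$ vertices with $\gamma_{\mathcal P}<\lfloor n/2\rfloor$ must contain a pair of twins. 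Since being twins is an equivalence relation, each $G_i$ carries a partition into twin classes, each class being a clique (true twins) or an independent set (false twins); I write $q(G_i)$ for the number of classes.

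First I would record a monotonicity of $q$ along the chain. If $u,v\in V(G_i)$ lie in different classes of $G_i$, then they are distinguished by some vertex $z\in V(G_i)$; since $z$ survives in every $G_j\supseteq G_i$, the vertices $u,v$ remain in different classes of $G_j$. Hence distinct classes never merge and $q(G_j)\ge q(G_i)$ for all $j\ge i$; in particular, going up the chain, a new vertex can only split classes, create a new class, or enlarge an existing class.

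The heart of the argument, and the step I expect to be the main obstacle, is to show that $q(G_i)$ is \emph{bounded}. Conjecture~\ref{conj:twin3} together with monotonicity immediately bounds the number of vertices of any twin-free induced subgraph of $G_i$ by $2K+1$ (such a subgraph $H$ satisfies $\lfloor |V(H)|/2\rfloor\le\gamma_{\mathcal P}(H)\le\gamma_{\mathcal P}(G_i)\le K$); in particular the fully reduced graph obtained by iterating twin-collapse has at most $2K+1$ vertices. The difficulty is that the one-step quotient obtained by collapsing each class to a single representative need \emph{not} be twin-free (for instance $C_4$ collapses to $K_2$), so Conjecture~\ref{conj:twin3} does not bound $q$ directly. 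To close this gap I would argue that many twin classes force the co-rank to grow: by Corollary~\ref{coro:bound} I may shrink every non-trivial class to a clique or independent set of size $2$ without changing $\gamma_{\mathcal P}$, passing to the minimal blow-up $G_i^{\delta_i}$; the requirement that distinct classes not merge forces many of these size-$2$ classes to be cliques, and such configurations (as in a disjoint union of edges $mK_2$, where $\gamma_{\mathcal P}=m$) drive $\gamma_{\mathcal P}$ up linearly in $q$. Making this precise, essentially establishing an inequality of the shape $q(G_i)\le 2\gamma_{\mathcal P}(G_i)+1$ from Conjecture~\ref{conj:twin3} and the blow-up invariance of Corollary~\ref{coro:bound}, is the technical crux.

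Granting a uniform bound $q(G_i)\le C$, the conclusion follows by stabilization. Being non-decreasing and bounded, $q(G_i)$ is eventually constant, say for $i\ge M_1$; by the no-merging observation, for $i\ge M_1$ every new vertex merely enlarges an existing class, so the adjacency pattern $H$ among the classes is the same graph for all $i\ge M_1$. Each class is permanently a clique or an independent set once it has at least two vertices, and a given class can pass from a singleton to a larger set at most once; since there are only finitely many classes, after finitely many further steps $i\ge M_2$ both the set of non-trivial classes and their types are frozen. Writing $G_i=H^{{\bf d}^{(i)}}$, where the entry of ${\bf d}^{(i)}$ at a class $c$ is $\pm(s-1)$ with $s$ the size of $c$ and the sign its type, we obtain ${\rm supp}({\bf d}^{(i)})=\delta$ constant for $i\ge M_2$. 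Hence $G_i\in\mathcal T_\delta(H)$ for all $i\ge M_2$, and taking $G=H$ and $M=M_2$ finishes the proof.
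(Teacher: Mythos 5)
Your skeleton is the right one (assume the co-rank is bounded, exhibit stabilization), and two of your three steps are sound: twin classes indeed never merge along the chain, and your stabilization argument in the final paragraph is correct --- in fact it is a cleaner finish than the paper's own. But the proof has a genuine gap exactly where you flag it: you never prove the uniform bound on the number of twin classes $q(G_i)$, and that bound is not a technical detail but essentially the entire content of Theorem~\ref{equivalence1}. Conjecture~\ref{conj:twin3} only bounds \emph{twin-free} induced subgraphs (by $2\gamma+1$ vertices), and, as you yourself observe, the one-step twin quotient need not be twin-free, so the conjecture gives no direct control on $q$. The paper's proof is devoted precisely to closing this hole, via modular decomposition: Conjecture~\ref{conj:twin3} bounds the maximal prime (twin-free) subgraph, the maximal modules are shown to be cographs (by maximality of the prime part), and the cograph modules are then controlled through their cotrees --- the height is bounded by exhibiting threshold graphs $Th_{2k}$ as induced subgraphs and proving $\gamma_{\mathcal P}(Th_{2k})\geq k$ via an explicit triangular minor, and the out-degrees of internal vertices of $\widetilde T$ are bounded using additivity of $\gamma_{\mathcal P}$ under disjoint union together with $\gamma_{\mathcal P}(\boxtimes_{i=1}^l H_i)\geq \gamma_{\mathcal P}(K_l^{(1,\ldots,1)})=l-1$ (Example~\ref{example:completa}). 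None of this machinery, or a substitute for it, appears in your proposal.

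Moreover, the heuristic you offer for the crux does not survive scrutiny. The claim that ``the requirement that distinct classes not merge forces many of these size-$2$ classes to be cliques'' is false: duplicate every vertex of the twin-free graph of Figure~\ref{fig:2} once; all seven twin classes are independent pairs, none merge (their external neighborhoods differ because the base graph is twin-free), and no clique class exists. The $mK_2$ picture also misses the hard case, namely deeply nested, threshold-like cographs: there the modular decomposition is tall while no large induced $mK_2$ or complete multipartite configuration need appear among the classes, and showing that $\gamma_{\mathcal P}$ nevertheless grows is exactly what the paper's threshold-graph computation accomplishes (note that threshold graphs are $2K_2$-free, so disjoint-union additivity alone cannot detect their co-rank). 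Whether your proposed linear inequality $q\leq 2\gamma_{\mathcal P}+1$ is even true is unclear --- the bound extractable from the paper's argument is far from linear --- but in any case \emph{some} bound must be proved, and your proposal proves none, so the argument is incomplete at its decisive step.
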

\begin{proof}
Let $\mathcal{G}=\{G_i\}_{i=1}^\infty$ be as in Conjecture~\ref{conj:twin4} with $\gamma={\rm max}\{\gamma_{\mathcal P}(G_i)\}_{i=1}^\infty< \infty$.

Our strategy is to give a lower bound for the algebraic co-rank of a graph.
The modular decomposition of a connected graph is obtained from a prime graph, that is, a twin-free graph, $K_2$ or $K_1$, by blowing-up each vertex with a cograph.
In this way, Conjecture~\ref{conj:twin3} will allow us to reduce Conjecture~\ref{conj:twin4} to the case of a cograph.
Before that, let us explain this assertion and recall the definition of modular decomposition of a graph and the concept of cograph and its cotree.
Given a graph $G$, a module of $G$ is a subset $U$ of their vertices, such that
\[
N_{G\setminus U}(u)=N_{G\setminus U}(v) \text{ for all } u,v\in U,
\]
where $N_{G}(u)$ is the open neighborhood of $u$ in $G$, that is the set of neighborhoods of the vertex $u$ in $G$.
For instance, the set of true (or false) twins of a given vertex is a module.
Note that one module can be a subset of another module.
Also note that the entire set of vertices of $G$ and any single vertex of $G$ is a module of $G$, which are called {\it trivial modules}.
The modular decomposition of a graph $G$ consists in decomposing the vertex set $V(G)$ in their modules.
That is, the modular decomposition is a recursive and hierarchical decomposition of the graph, not only a partition of their vertices.
For simple graphs, this decomposition is unique.
A graph is called prime if all their modules are trivial.
Note that a graph different from $K_2$ or $K_1\sqcup K_1$ is prime if and only if it is a twin-free graph.
This hierarchical decomposition can be encoded into a tree where the root corresponds to the maximal induced prime subgraph and leaves the vertices of $G$ and the other internal vertices labeled with join ($\boxtimes$) or disjoin union ($\sqcup$) operations 
(similar to replications and duplications to modules). See Example~\ref{examplemodular}.

\begin{Example}\label{examplemodular}
The graph in Figure~\ref{modular} has four nontrivial modules $\{v_1,v_5,v_6\}$, $\{v_5,v_6\}$,$\{v_3,v_7\}$ and $\{v_4,v_8\}$.
The induced subgraphs of these modules are cographs.
Its maximal induced prime subgraph is the path with four vertices $P_4=u_1u_2u_3u_4$.
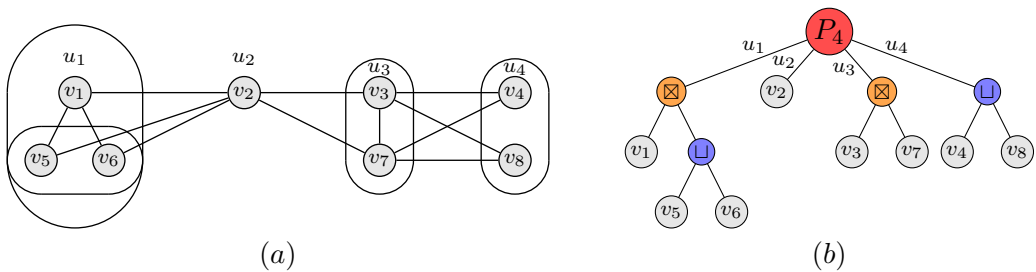
\begin{figure}[h]
\begin{center}
\begin{tabular}{c@{\extracolsep{10mm}}c}
\begin{tikzpicture}[line width=0.5pt, scale=0.9]
	\tikzstyle{every node}=[minimum width=0pt, inner sep=1pt, circle, draw, fill=gray!20!]
	\draw (-3.5,0) node (v1) {\scriptsize$v_1$};
	\draw (-1,0) node (v2) {\scriptsize $v_2$};
	\draw (1,0) node (v3) {\scriptsize $v_3$};
	\draw (3,0) node (v4) {\scriptsize $v_4$};
	\draw (-4,-1) node (v1p) {\scriptsize $v_5$};
	\draw (-3,-1) node (v2p) {\scriptsize $v_6$};
	\draw (1,-1) node (v3p) {\scriptsize $v_7$};
	\draw (3,-1) node (v4p) {\scriptsize $v_8$};
	\tikzstyle{every node}=[] 
	\draw (-3.5,0.5) node () {\scriptsize$u_1$};
	\draw (-1,0.5) node () {\scriptsize$u_2$};
	\draw (1,0.35) node () {\scriptsize$u_3$};
	\draw (3,0.35) node () {\scriptsize$u_4$};
	\draw (v1) -- (v2) -- (v3) -- (v4);
	\draw (v1p) -- (v2) -- (v3p) -- (v4p);
	\draw (v1p) -- (v1) -- (v2p) -- (v2);
	\draw (v3) -- (v3p);
	\draw (v3) -- (v4p);
	\draw (v3p) -- (v4);
  	\draw (-2.5,0) arc (0:180:1);
  	\draw (-2.5,0) -- (-2.5,-1);
	\draw (-4.5,0) -- (-4.5,-1);
	\draw (-4.5,-1) arc (180:360:1);
	
	\draw (-4,-0.5) arc (90:270:0.5);
  	\draw (-4,-0.5) -- (-3,-0.5);
	\draw (-4,-1.5) -- (-3,-1.5);
	\draw (-3,-1.5) arc (-90:90:0.5);
	
	\draw (1.5,0) arc (0:180:0.5);
  	\draw (0.5,0) -- (0.5,-1);
	\draw (1.5,0) -- (1.5,-1);
	\draw (0.5,-1) arc (180:360:0.5);
	\draw (3.5,0) arc (0:180:0.5);
  	\draw (2.5,0) -- (2.5,-1);
	\draw (3.5,0) -- (3.5,-1);
	\draw (2.5,-1) arc (180:360:0.5);
\end{tikzpicture}
&
\begin{tikzpicture}
[level distance=8mm,
level 1/.style={sibling distance=14mm}, 
level 2/.style={sibling distance=8mm}, 
level 3/.style={sibling distance=8mm}]
\tikzstyle{every node}=[minimum width=0pt, inner sep=1pt, circle, draw, fill=gray!20!] 
\node[fill=red!70!] (root) {$P_4$}
	child{
		child 
		child{
			child
			child
			}
		}
	child
	child{
		child
		child
		}
	child{
		child
		child
		};
\node[fill=orange!70!] at (root-1) {\scriptsize $\boxtimes$};
\node at (root-2) {\scriptsize $v_2$};
\node[fill=orange!70!] at (root-3) {\scriptsize $\boxtimes$};
\node[fill=blue!50!] at (root-4) {\scriptsize $\sqcup$};
\node at (root-1-1) {\scriptsize $v_1$};	
\node[fill=blue!50!] at (root-1-2) {\scriptsize $\sqcup$};
\node at (root-1-2-1) {\scriptsize $v_5$};
\node at (root-1-2-2) {\scriptsize $v_6$};
\node at (root-3-1) {\scriptsize $v_3$};
\node at (root-3-2) {\scriptsize $v_7$};
\node at (root-4-1) {\scriptsize $v_4$};
\node at (root-4-2) {\scriptsize $v_8$};
\tikzstyle{every node}=[] 
\draw (-1,-0.2) node () {\scriptsize$u_1$};
\draw (-0.6,-0.4) node () {\scriptsize$u_2$};
\draw (0.2,-0.5) node () {\scriptsize$u_3$};
\draw (0.9,-0.2) node () {\scriptsize$u_4$};
\end{tikzpicture}
\\
$(a)$
&
$(b)$
\end{tabular}
\end{center}
\caption{A modular decomposition of a graph where each of the ellipses correspond to an non-trivial module and its associated tree.}
\label{modular}
\end{figure}
\end{Example}

Now, let us introduce the concept of cograph.
There exist several alternative definitions of a cograph; one of them says that a cograph is a simple graph without the path $P_4$ with four vertices as an induced subgraph.
Another characterization of a cograph says that a cograph is a graph in which every nontrivial induced subgraph has at least a pair of twins.
That is, the graph corresponding to the root in the tree associated to the modular decomposition of a cograph is equal to $K_1$.
Note that the non-trivial module of a graph induces a cograph.
Moreover, the modular decomposition of a graph decomposes it as a twin-free graph and a blow-up 
(replace a vertex $v$ with a graph $H$ such that $N(u)\cap V(G)=N_G(v)$ for all $u\in H$) of each of their vertices with a cograph, see Figure~\ref{modular}.
The reader may consult~\cite{cographs} and the references contained there for more details about cographs and its cotrees.

Now, we will first use Conjecture~\ref{conj:twin3} to reduce Conjecture~\ref{conj:twin4} to the case of a cograph.
Let
\[
\mathcal{T}=\{ G\, | \, G \text{ is a twin-free induced subgraph of } G_i \text{ for some } G_i \in \mathcal{G}\}
\]
be the set of all twin-free graphs that are induced subgraphs of some $G_i \in \mathcal{G}$.
Since the trivial graph with one vertex is twin-free, $\mathcal{T}$ is non-empty.
Moreover, Conjecture~\ref{conj:twin3} implies that any graph in $\mathcal{T}$ has at most $2\gamma+1$ vertices
and thus $\mathcal{T}$ is finite.
Let $G$ be maximal (under the partial order given by induced subgraphs) graph in $\mathcal{T}$ 
and $M$ be the first natural number such that $G$ is an induced subgraph of $G_M$.
For any $v\in V(G)$, let 
\[
L_v^j=\{ u \, | \, u \in V(G_j) \text{ and such that } N_G(v)=N_{G_j}(u)\cap (V(G)-v)\}.
\]
Clearly, $v\in L_v^j$ for all $v\in V(G)$ and $j\geq M$.
Since $G$ is maximal, for any $u\in G_j-G$ the subgraph of $G_j$ induced by $V(G)\cup u$ is not twin-free.
Also, since $G$ is twin-free, $L_u^j\cap L_v^j=\emptyset$ for all $u\neq v\in V(G)$ and for all $j$.
That is, any $u\in G_j-G$ belongs to $L_v^j$ for some $v\in V(G)$ and thus $\bigsqcup_{v\in V(G)}L_v^j=V(G_j)$.
Note that the $L_v^j$ are the maximal modules of $G_j$ and $G$ is the maximal prime subgraph of $G_j$.

All the vertices in $L_v^j$ play the same role of $v$ in the sense that if $u\in L_v^j$, then the induced subgraph 
by $(V(G)-v)\cup u$ in $G_j$ is $G$.
Moreover, for any $v\in V(G)$ and $j\geq M$, the subgraph $G_j[L_v^j]$ of $G_j$ induced by $L_v^j$ is a cograph.
Otherwise, $G_j[L_v^j]$ would contain $P_4$ as an induced subgraph and therefore the subgraph of $G_j$ 
induced by the union of vertices of $G-v$ and the vertices of $P_4$ would be a twin-free graph; 
a contradiction to the maximality of $G$ (any vertex in $P_4$ play the same role of $v$).
Note that this gives us the modular decomposition of $G_j$ as a twin-free graph with the blow-up (with a cograph) in each of their vertices.
 
Until now, using Conjecture~\ref{conj:twin3} we have proved that the maximal twin-free subgraph (or maximal prime subgraph) of the ${G_j}'s$ stabilizes at some point.
Therefore we need to prove that their maximal modules also must be stabilized 
(in the sense that there exists a graph $H$, $\delta\in \{0,1,-1\}^{V(H)}$, and $M\in \mathbb{N}$ such that $G_i[L_v^i]\in \mathcal{T}_{\delta}(H)$ for all $i\geq M$) at some point.
Since the modules are cographs, without loss of generality we may assume that
$\mathcal{G}=\{G_i\}_{i=1}^\infty$ (taking $\mathcal{G}=\{G_i[L_v^i]\}_{i=1}^\infty$) consists of cographs.
Given a cograph, its cotree is the tree obtained through its modular decomposition.
We will use this cotree to bound the algebraic co-rank of its cograph.
Let $C$ be a cograph and $T$ be its cotree.
Let $\widetilde{T}$ be the tree obtained by erasing the leaves of $T$, and $T'$ be the tree obtained by erasing the twins of $T$.
Note that every twin of $T$ is a leaf, but not every leaf has a twin.
For instance, in Figure~\ref{cotree}.$(b)$ the vertex $v_3$ is a leaf with no twins.
Note that, 
\[
C=(H)^{\bf d} \text{ for some }{\bf d}\in \mathbb{Z}^{V(H)},
\]
where $H$ is the cograph with cotree equal to $T'$, see Figure~\ref{cotree} $(c)$ and $(d)$.
Moreover, twin vertices in $T$ correspond to twin vertices in $C$.
The next example illustrate this situation.

\begin{Example}
Figure~\ref{cotree} contains a cograph $C$, its cotree $T$, the cotree $T'$ obtained by erasing its twins and its corresponding graph $H$.
Clearly $C=H^{(1,0,-1,-1)}$.
\begin{figure}[h]
\begin{center}
\begin{tabular}{c@{\extracolsep{5mm}}c@{\extracolsep{5mm}}c@{\extracolsep{5mm}}c}
\begin{tikzpicture}[line width=0.5pt, scale=0.9]
	\tikzstyle{every node}=[minimum width=0pt, inner sep=1pt, circle, draw, fill=gray!20!]
	\draw (-0.7,0) node (v1) {\scriptsize $v_1$};
	\draw (0.7,0) node (v2) {\scriptsize $v_2$};
	\draw (-2.5,-2) node (v3) {\scriptsize $v_3$};
	\draw (-0.7,-3) node (v4) {\scriptsize $v_4$};
	\draw (0.7,-3) node (v5) {\scriptsize $v_5$};
	\draw (2,-2.5) node (v6) {\scriptsize $v_6$};
	\draw (2.8,-1.8) node (v7) {\scriptsize $v_7$};
	\tikzstyle{every node}=[] 
	\draw (0,-1) node {$C$};
	\draw (v4) -- (v5);
	\draw (v1) -- (v3);
	\draw (v1) -- (v4);
	\draw (v1) -- (v5);
	\draw (v1) -- (v6);
	\draw (v1) -- (v7);
	\draw (v2) -- (v3);
	\draw (v2) -- (v4);
	\draw (v2) -- (v5);
	\draw (v2) -- (v6);
	\draw (v2) -- (v7);
	\draw (v6) -- (v7);	
\end{tikzpicture}
&
\begin{tikzpicture}
[level distance=8mm,
level 1/.style={sibling distance=26mm}, 
level 2/.style={sibling distance=12mm}, 
level 3/.style={sibling distance=6mm}]
\tikzstyle{every node}=[minimum width=0pt, inner sep=1pt, circle, draw, fill=gray!20!]
\node[fill=orange!70!] (root) {\scriptsize $\boxtimes$}
	child{
		child 
		child
		}
	child{
		child
		child{
			child
			child
			}
		child{
			child
			child
			}
		};
\node at (root-2-2-1) {\scriptsize $v_4$};
\node at (root-2-2-2) {\scriptsize $v_5$};
\node at (root-2-3-1) {\scriptsize $v_6$};
\node at (root-2-3-2) {\scriptsize $v_7$};
\node at (root-1-1) {\scriptsize $v_1$};	
\node at (root-1-2) {\scriptsize $v_2$};
\node at (root-2-1) {\scriptsize $v_3$};
\tikzstyle{every node}=[minimum width=0pt, inner sep=1pt, circle, draw, fill=blue!50!] 
\node at (root-1) {\scriptsize $\sqcup$};
\node at (root-2) {\scriptsize $\sqcup$};
\tikzstyle{every node}=[minimum width=0pt, inner sep=1pt, circle, draw, fill=orange!70!] 
\node  at (root-2-2) {\scriptsize $\boxtimes$};
\node at (root-2-3) {\scriptsize $\boxtimes$};
\tikzstyle{every node}=[] 
\draw (0,-1) node {$T$};
\end{tikzpicture}
&
\begin{tikzpicture}
[level distance=8mm,
level 1/.style={sibling distance=12mm}, 
level 2/.style={sibling distance=8mm}, 
level 3/.style={sibling distance=6mm}]
\tikzstyle{every node}=[minimum width=0pt, inner sep=1pt, circle, draw, fill=gray!20!]
\node[fill=orange!70!]  (root) {\scriptsize $\boxtimes$}
	child
	child{
		child
		child
		child
		};
\node at (root-1) {\scriptsize $u_1$};
\node[fill=blue!50!]  at (root-2) {\scriptsize $\sqcup$};
\node at (root-2-1) {\scriptsize $u_2$};
\node at (root-2-2) {\scriptsize $u_3$};
\node at (root-2-3) {\scriptsize $u_4$};
\tikzstyle{every node}=[] 
\draw (0,-1) node {$T'$};
\draw (0,-2.5) node {\mbox{ }};
\end{tikzpicture}
&
\begin{tikzpicture}[line width=0.5pt, scale=0.9]
	\tikzstyle{every node}=[minimum width=0pt, inner sep=1pt, circle, draw, fill=gray!20!]
	\draw (0,0) node (u1) {\scriptsize $u_1$};
	\draw (-1,-2) node (u2) {\scriptsize $u_2$};
	\draw (0,-2) node (u3) {\scriptsize $u_3$};
	\draw (1,-2) node (u4) {\scriptsize $u_4$};
	\draw (u1) -- (u2);
	\draw (u1) -- (u3);
	\draw (u1) -- (u4);
	\tikzstyle{every node}=[] 
	\draw (-0.2,-1) node {$H$};	
\end{tikzpicture}
\\
$(a)$
&
$(b)$
&
$(c)$
&
$(d)$
\end{tabular}		
\end{center}
\caption{A cograph $C$, its cotree $T$, a cotree $T'$ obtained from $T$ and the cograph associated to $T'$.}
\label{cotree}
\end{figure}
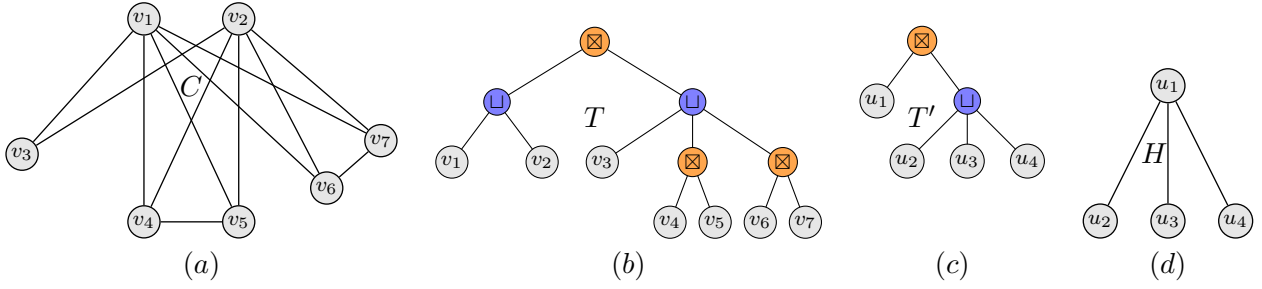
\end{Example}

Now, we give a lower bound for the algebraic co-rank of a connected cograph $C$ in function of the height of its cotree $T$ 
and the out-degree of the vertices of $\widetilde{T}$.
Before we do that, we give a lower bound for the algebraic co-rank of a special class of cographs: the threshold graphs.
Let $Th_1$ be the trivial graph with only one vertex (denoted by $v_1$) and
\[
Th_n=
\begin{cases}
v_{2k}\boxtimes Th_{2k-1} & \text{ if } n=2k,\\
v_{2k+1}\sqcup Th_{2k}  & \text{ if } n=2k+1,
\end{cases}
\]
where $\boxtimes$ means the join of graphs and $\sqcup$ means the disjoint union of graphs.
Since 
\[
L(Th_{2k},X)[\{2,4,\ldots,2k\},\{1,3,\ldots,2k-1\}]=
\left[\begin{array}{cccc}
-1 &    0 &   \cdots &    0 \\
-1 &    -1 &     0 &    \vdots \\
\vdots &    &    \ddots &    0 \\
-1 &     &     \cdots &    -1
\end{array}\right],
\]
$\gamma_{\mathcal P}(Th_{2k})\geq k$.
Note that any connected threshold graph different from the trivial graph 
belongs to one of the families of graphs $\mathcal{T}_{(1,-1,\ldots,1,-1)}(Th_{2k})$ for all $k\geq 1$.
In a similar way, it can be proved that $\gamma_{\mathcal P}(\boxtimes_{i=1}^l Th_{2k_i})\geq l-1+\sum_{i=1}^l k_i$.

Now, let $r$ be the root of $T$, $u$ be one of their leaves, and consider the path $P_{u}(T)$ from $u$ to $r$.
Note that if $C$ is connected, then the root of $T$ is labeled with a join operation.
If $P_{u}(T)$ has length $n$ (the number of edges), then $C$ contains the threshold graph $Th_{n+1}$ 
as an induced subgraph when $n$ is odd and $Th_{n}$ when $n$ is even.
For instance, consider the cograph given in Figure~\ref{cotree}.$(a)$ and its cotree given in Figure~\ref{cotree}.$(b)$.
If we choose the vertex $v_4$, then the length of $P_{v_4}(T)$ is three, and the graph induced by $v_4,v_5,v_6,v_1$ is equal to $Th_4$.
In a similar way, if we choose $v_1$, then the length of $P_{v_1}(T)$ is two, and it can be check that the graph induced by $v_1,v_3$ is equal to $Th_2$.
Now, if $T$ has height $h$, then $C$ contains $Th_{h}$ as an induced subgraph.
Since $\gamma_{\mathcal P}(C)\geq\gamma_{\mathcal P}(Th_{h})\geq \lfloor \frac{h}{2}\rfloor$ and $\gamma_{\mathcal P}(G_j)=\gamma < \infty$,
the height of $T$ is less or equal to $2\gamma+1$.

Now, we give a lower bound for the algebraic co-rank of a connected cograph $C$ in terms of the 
out-degree of the vertices of its associated tree $\widetilde{T}$, (the obtained from its cotree by erasing its leaves).
Let $u$ be a vertex of $\widetilde{T}$.
If $u$ is a leaf of $\widetilde{T}$, then its out-degree is zero.
Assume that $u$ is not a leaf of $\widetilde{T}$.
If $u$ is labeled with a disjoint union operation, then $C$ contains a disjoint union of the subgraphs associated to the out-neighborhoods of $u$.
Since
\[
\gamma_{\mathcal P}\left(\bigsqcup_{i=1}^n H_i\right)=\sum_{i=1}^n \gamma_{\mathcal P}(H_i),
\] 
and the unique graph with algebraic co-rank equal to zero is the trivial graph, the algebraic co-rank of $C$ is at least the out-degree of $u$.
If $u$ is labeled with a join operation, then $C$ contains the join of the subgraphs associated to the out-neighborhoods of $u$.
On the other hand, if $\{H_i\}_{i=1}^l$ is a set of graphs, each one different to a complete graph, then
$\boxtimes_{i=1}^l H_i$ contains $K_l^{(1,\ldots,1)}$ and therefore by Example~\ref{example:completa}
\[
\gamma_{\mathcal P}(\boxtimes_{i=1}^l H_i)\geq \gamma_{\mathcal P}(K_l^{(1,\ldots,1)})=l-1.
\]
Thus the algebraic co-rank of $C$ is at least the out-degree of $u$ minus one.
Since $\gamma_{\mathcal P}(G_j)=\gamma < \infty$, the out-degree of the vertices of $\widetilde{T}$ is less or equal to $\gamma+1$.
Therefore the height and the out-degree of all the vertices non adjacent to a leaf of $T$ are bounded.

Since $G_i$ is a proper induced subgraph of $G_j$ for all $i<j$ and any induced subgraph of a cograph $C$ corresponds to a subcotree of the cotree $T$ of $C$
(the subcotree of $T$ induced by the leaves that correspond to the vertices in the induced subgraph and rooted by the common ancestor of these leaves), 
there exists a cograph $C_v$ and $M'\geq M$ such that 
\[
G_j[L_v^j]=C_v^{\bf d}\text{ for some }{\bf d}\in \{0,1,-1\}^{V(C_v)}\text{ for all }j\geq M'
\] 
and therefore we get the statement given in Conjecture~\ref{conj:twin4}.
\end{proof}

Note that any lower bound for the algebraic co-rank of a graph in terms of its number of vertices in the case 
of twin-free graphs and in terms of the structure of its cotree in case of cographs implies Conjectures~\ref{conj:finite} and~\ref{conj:twin4}.
A key fact in the proof of Theorem~\ref{equivalence1} is to give a lower bound for the algebraic co-rank of a cograph in terms of its cotree.
The lower bound presented in the proof of Theorem~\ref{equivalence1} is very loose, however we conjecture the following:

\begin{Conjecture}\label{boundcograph}
If $C$ is a cograph, then
\[
\gamma_{\mathcal P}(C)\geq |E(\widetilde{T})|-\#\{\text{internal vertices of } \widetilde{T} \text{ labeled with the join or join operatation}\},
\]
where $\widetilde{T}$ is the tree obtained from the cotree of $C$  by erasing their leaves.
\end{Conjecture}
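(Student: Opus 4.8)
The plan is to prove the bound by induction on the cotree, reducing everything to an explicit construction of a square submatrix of the generalized Laplacian whose determinant is a unit. Since $\gamma_{\mathcal P}$ is additive over disjoint unions (as used in the proof of Theorem~\ref{equivalence1}) and the right-hand side is additive over the connected components of $C$, it suffices to treat the case in which $C$ is connected, so that the root of the cotree $T$ is a join node. Write $C=K_\ell\boxtimes C_{w_1}\boxtimes\cdots\boxtimes C_{w_p}$, where $K_\ell$ (possibly empty) is the join of the $\ell$ leaf-children of the root and the $C_{w_j}$ are the subcographs hanging from the $p$ internal children $w_1,\dots,w_p$; each $w_j$ is a union node, so each $C_{w_j}$ is disconnected. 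Writing $b(C)=|E(\widetilde{T})|-\#\{\text{join nodes of }\widetilde{T}\}$, the recursion is $b(C)=(p-1)+\sum_{j}b(C_{w_j})$, and for a union node $b(C_{w_j})=\kappa_j+\sum_{D}b(D)$, where $D$ ranges over the connected components of $C_{w_j}$ and $\kappa_j$ counts those with at least two vertices.

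Thus the inductive step reduces to the join inequality
\[
\gamma_{\mathcal P}\!\left(K_\ell\boxtimes\boxtimes_{j=1}^{p}C_{w_j}\right)\ \geq\ (p-1)+\sum_{j=1}^{p}\kappa_j+\sum_{D}b(D),
\]
where the last sum ranges over all nontrivial components $D$ of all the $C_{w_j}$, and the inductive hypothesis supplies $\gamma_{\mathcal P}(D)\geq b(D)$ for each such $D$, a strictly smaller connected cograph. To make these co-ranks combinable I would strengthen the hypothesis to a combinatorial form: each $D$ admits a single $b(D)\times b(D)$ submatrix of $L(D,X)$ of unit determinant, leaving at least one spare vertex (this is possible since $b(D)\le\gamma_{\mathcal P}(D)\le|V(D)|-1$). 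The submatrix for $C$ is then assembled from the per-component blocks realizing $\sum_D b(D)$, placed block-diagonally and automatically decoupled because distinct components sit in a $0$-block of the Laplacian; a coupling packet contributing $\sum_j\kappa_j$, in which the spare vertex of each nontrivial component serves as one extra row (or column) whose cross-factor and leaf $-1$-entries are cleared against the $0$-entries separating it from its sibling components inside the same factor; and $p-1$ further rows and columns tying the $p$ factors together in the spirit of the induced $K_p^{(1,\dots,1)}$ with $\gamma_{\mathcal P}(K_p^{(1,\dots,1)})=p-1$ from Example~\ref{example:completa}. One evaluates the resulting determinant by ordering indices so that the matrix is block-triangular with $\pm1$ on the diagonal, expanding the join blocks via Lemma~\ref{lema:det1}. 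The base cases are immediate ($b=0$ for a single vertex and for $K_\ell$, which has $\gamma_{\mathcal P}=1$), and monotonicity under induced subgraphs \cite[Proposition 3.3]{critical} lets me discard the $K_\ell$ factor when unneeded.

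The hard part will be the coupling construction, that is, the production of the $(p-1)+\sum_j\kappa_j$ extra rows and columns. The naive choice—one designated row and one designated column per factor or per component—fails, because the off-diagonal join blocks are all-ones: such a choice yields a submatrix of the form $I-J$ or $\begin{bmatrix}-I&-J\\-J&-I\end{bmatrix}$, whose determinant is $1-m$ or $-3$, not a unit. The genuine difficulty is therefore to thread the coupling indices through the cotree so that every added $-1$ occupies a position that can be eliminated triangularly against a within-factor $0$, while guaranteeing that the spare vertices demanded by the strengthened hypothesis are never exhausted; this is exactly where the disconnectedness of each $C_{w_j}$ and the bound $b(D)\le|V(D)|-1$ must be used quantitatively. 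Controlling this bookkeeping—equivalently, showing that the triangular assembly never runs out of usable $0$-entries—is the crux, with everything else reducing to the routine propagation of the recursion for $b(C)$ together with the additivity and monotonicity of $\gamma_{\mathcal P}$.
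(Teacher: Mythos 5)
You should first be aware that the statement you are proving is posed in the paper as Conjecture~\ref{boundcograph} and is \emph{not} proved there: the only lower bounds the paper actually establishes (inside the proof of Theorem~\ref{equivalence1}) are the ones via threshold subgraphs, $\gamma_{\mathcal P}(C)\geq \lfloor h/2\rfloor$ with $h$ the height of the cotree, and via joins, $\gamma_{\mathcal P}(\boxtimes_{i=1}^{l}H_i)\geq l-1$ from Example~\ref{example:completa}; the authors explicitly call these bounds ``very loose'' and state the present inequality as an open conjecture precisely because they cannot do better. So there is no paper proof to compare yours against; the only question is whether your argument closes the conjecture, and it does not.

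The decisive step is missing, and it is a genuine gap rather than deferred bookkeeping. As you yourself write, the coupling construction --- producing the $(p-1)+\sum_j\kappa_j$ extra rows and columns so that the assembled $b(C)\times b(C)$ submatrix has unit determinant --- is ``the crux''; your proposal only states the property such a construction would need (every added $-1$ eliminable triangularly against a within-factor $0$) and gives evidence that the naive choices fail (determinants $1-m$ or $-3$). That property is essentially a restatement of the conjecture, and nothing in the proposal rules out that the usable zeros are exhausted, e.g.\ when many of the $\kappa_j$ components are single edges whose ``spare'' vertices carry $-1$'s to every vertex outside their own factor. Two further points compound the gap. First, your strengthened inductive hypothesis --- a \emph{single} $b(D)\times b(D)$ submatrix of $L(D,X)$ of unit determinant --- does not follow from $b(D)\le\gamma_{\mathcal P}(D)\le|V(D)|-1$: triviality of $I_{b(D)}(D,X)$ means the ideal generated by \emph{all} $b(D)\times b(D)$ minors equals $\langle 1_{\mathcal P}\rangle$, not that some single minor is a unit; hence the hypothesis must itself be propagated through the induction, and its inductive step is exactly the missing construction. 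Second, the opening reduction to connected $C$ is incorrect as stated: the right-hand side is not additive over components, because for disconnected $C$ the cotree root is a union node and one gets $b(C)=\kappa+\sum_D b(D)$, where $\kappa$ counts the nontrivial components; repairing this forces the stronger per-component bound $\gamma_{\mathcal P}(D)\geq b(D)+1$ (this is what your spare vertex is implicitly for), which again can only be delivered by the unproven coupling step. In short, your proposal is a plausible reduction of Conjecture~\ref{boundcograph} to an explicit matrix-construction claim, but that claim carries the full content of the conjecture and remains open.
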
 

As a consequence of the lower bound for the algebraic co-rank of a cograph given in the proof of Theorem~\ref{equivalence1}, we have the following result:
\begin{Corollary}\label{finitecograph}
If $k$ is a positive integer, then
\[
\mathcal{C}_{\leq k}=\{C\, | \, C \text{ is a cograph in } \Gamma_{\leq k} \}=\bigcup_{(G, \delta)\in \mathcal{G}} \mathcal{T}_{\delta}^*(G) 
\]
for some finite set $\mathcal{G}$ of pairs $(G, \delta)$ with $\delta \in \{0,1,-1\}^{V(G)}$.
\end{Corollary}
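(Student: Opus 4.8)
The plan is to read the statement off the lower bound for the algebraic co-rank of a cograph that was extracted during the proof of Theorem~\ref{equivalence1}. Recall that there, for a connected cograph $C$ with cotree $T$, two bounds were produced in terms of $\gamma:=\gamma_{\mathcal P}(C)$: first, since $C$ contains the threshold graph $Th_h$ as an induced subgraph when $h$ is the height of $T$ and $\gamma_{\mathcal P}(Th_h)\geq\lfloor h/2\rfloor$, the height of $T$ is at most $2\gamma+1$; second, an analysis of the join and disjoint-union labels shows that the out-degree of every vertex of the reduced tree $\widetilde{T}$ (the cotree with its leaves erased) is at most $\gamma+1$. First I would assemble these into a single bound: a rooted tree of height at most $2\gamma+1$ in which every vertex has at most $\gamma+1$ children has at most $N(\gamma):=\sum_{i=0}^{2\gamma+1}(\gamma+1)^i$ vertices, so $|V(\widetilde{T})|\leq N(k)$ whenever $\gamma\leq k$.

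The heart of the argument is to convert this bound on $\widetilde{T}$ into a bound on the number of vertices of the twin-reduced cograph $H$, which is the cograph whose cotree is $T'$ (obtained from $T$ by erasing twins) and which satisfies $C=H^{\bf d}$ for some ${\bf d}\in\mathbb{Z}^{V(H)}$. The key combinatorial fact I would use is that two leaves of a cotree are twins exactly when they are siblings, that is, share a parent. Consequently the leaf-children of a fixed internal node form a single class of mutual twins, so erasing twins leaves each internal node with at most one leaf-child. Since erasing twins deletes only leaves and never internal nodes, the internal nodes of $T'$ form a subset of the internal nodes of $T$, which are exactly the vertices of $\widetilde{T}$. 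The map sending each leaf of $T'$ to its (internal) parent is therefore injective, and I would conclude
\[
|V(H)|=\#\{\text{leaves of } T'\}\leq \#\{\text{internal nodes of } T'\}\leq |V(\widetilde{T})|\leq N(k).
\]

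With $|V(H)|$ bounded, there are only finitely many isomorphism types of such $H$, and for each only finitely many $\delta\in\{0,1,-1\}^{V(H)}$; I would let $\mathcal{G}$ be the (finite) set of all pairs $(H,\delta)$ with $|V(H)|\leq N(k)$, $H$ a cograph, and $\gamma_{\mathcal P}(H^{\delta})\leq k$. Then I would verify the two inclusions. For $\subseteq$: given $C\in\mathcal{C}_{\leq k}$, writing $C=H^{\bf d}$ as above and setting $\delta={\rm supp}({\bf d})$, Corollary~\ref{coro:bound} gives $\gamma_{\mathcal P}(H^{\delta})=\gamma_{\mathcal P}(H^{\bf d})=\gamma_{\mathcal P}(C)\leq k$, so $(H,\delta)\in\mathcal{G}$ and $C\in\mathcal{T}_{\delta}(H)\subseteq\mathcal{T}_{\delta}^{*}(H)$. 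For $\supseteq$: every graph in $\mathcal{T}_{\delta}(H)$ is some $H^{\bf d}$ with ${\rm supp}({\bf d})=\delta$, which is again a cograph (duplication and replication add a sibling leaf to the cotree) and satisfies $\gamma_{\mathcal P}(H^{\bf d})=\gamma_{\mathcal P}(H^{\delta})\leq k$ by Corollary~\ref{coro:bound}; passing to an induced subgraph preserves being a cograph and can only decrease the algebraic co-rank, by the induced-subgraph monotonicity recalled in the introduction, so $\mathcal{T}_{\delta}^{*}(H)\subseteq\mathcal{C}_{\leq k}$.

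I expect the main obstacle to be the bookkeeping in the second paragraph: making precise the passage between the three trees $T$, $\widetilde{T}$, and $T'$, and justifying that erasing twins corresponds exactly to collapsing sibling leaves, together with the minor care needed about connectivity, since $\mathcal{C}_{\leq k}$ consists of connected cographs while the reductions behind the bounds assume the root of the cotree carries a join label.
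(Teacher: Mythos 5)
Your proposal is correct and takes essentially the same route as the paper's own (very terse) proof: both reuse the two bounds extracted in the proof of Theorem~\ref{equivalence1} --- height of the cotree at most $2\gamma+1$ and out-degrees of $\widetilde{T}$ at most $\gamma+1$ --- to conclude there are only finitely many possible reduced cotrees $T'$, hence finitely many cographs $H$ with $C=H^{\bf d}$, your version merely being more explicit (the quantitative bound $N(k)$, the leaf-counting injection, and the verification of both inclusions via Corollary~\ref{coro:bound}). The only point to watch is that your injection is valid as stated only under your convention that erasing twins never removes internal nodes (i.e., unary nodes are not contracted), or equivalently if each leaf of $T'$ is mapped to its parent in $T$ rather than in $T'$; in the contracted tree the paper actually draws (Figure~\ref{cotree}, where $u_2,u_3,u_4$ share a parent), promoted leaves can again become siblings and the parent map on $T'$ fails to be injective, though the resulting bound on $|V(H)|$ is unaffected.
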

\begin{proof}
Let $C$ be a cograph with algebraic co-rank less than or equal to  $k$, $T$ be its cotree, $T'$ be the tree obtained from $T$ by erasing their twin vertices, and $H$ be the cograph with cotree equal to $T'$.
Clearly $C=H^{\bf d}$ for some ${\bf d}\in \mathbb{Z}^{V(H)}$.
From the proof of Theorem~\ref{equivalence1}, we have that the height of $T$, which is also the height of $T'$, 
is upper bounded and the out-degree of the vertices of $T'$ is also upper bounded.
Therefore, there exists a finite number of trees that can be the $T'$ of $C$ and therefore it turns out the result.
\end{proof}

\begin{Theorem}\label{equivalence2}
Conjecture~\ref{conj:twin3} implies Conjecture~\ref{conj:finite}.
\end{Theorem}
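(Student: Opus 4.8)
The plan is to follow the strategy of Theorem~\ref{equivalence1} but applied to a single graph $G\in \Gamma_{\leq k}$ rather than to an increasing family, replacing the use of Conjecture~\ref{conj:twin4} by a direct finiteness count and invoking Corollary~\ref{finitecograph} to dispose of the cograph pieces. The guiding principle is that Conjecture~\ref{conj:twin3} forces the maximal twin-free (prime) part of any graph of bounded algebraic co-rank to be small, while each module hanging off of it is a cograph, and cographs of bounded co-rank are already classified by Corollary~\ref{finitecograph}.

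First I would fix a connected $G\in\Gamma_{\leq k}$ and let $P$ be a maximal (under induced subgraphs) twin-free induced subgraph of $G$; apart from the exceptional cases $K_1$ and $K_2$, this is the prime graph at the root of the modular decomposition of $G$. Since $P$ is an induced subgraph, $\gamma_{\mathcal P}(P)\leq \gamma_{\mathcal P}(G)\leq k$, so Conjecture~\ref{conj:twin3} gives $\lfloor |V(P)|/2\rfloor\leq k$, i.e. $|V(P)|\leq 2k+1$; hence $P$ ranges over a finite list of graphs. Next I would show that the maximal modules $L_v=\{u\in V(G): N_G(u)\cap(V(P)\setminus v)=N_P(v)\}$ partition $V(G)$ and that each $G[L_v]$ is a cograph. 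The partition follows because adjoining any $u\notin V(P)$ to $P$ must destroy twin-freeness (by maximality), so $u$ becomes a twin of a unique vertex $v\in V(P)$ and thus lies in $L_v$. The cograph claim is the same $P_4$-maximality argument used in Theorem~\ref{equivalence1}: if some $G[L_v]$ contained an induced $P_4$, then adjoining its four vertices to $V(P)\setminus\{v\}$ would produce a twin-free induced subgraph strictly larger than $P$, a contradiction.

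Once each module is a cograph with $\gamma_{\mathcal P}(G[L_v])\leq k$, Corollary~\ref{finitecograph} applies: each $G[L_v]$ equals $H_v^{{\bf d}_v}$ for some reduced cograph $H_v$ drawn from a finite set and some ${\bf d}_v$. I would then let $\Gamma$ be the graph obtained from $P$ by blowing up each vertex $v$ with $H_v$, and assemble a single ${\bf d}$ from the ${\bf d}_v$. Because duplicating or replicating a vertex $u$ of $H_v$ inside $\Gamma$ creates a new vertex with exactly the neighbors of $u$ --- its internal $H_v$-neighbors together with the external neighbors prescribed by the blow-up --- this operation rebuilds $H_v^{{\bf d}_v}$ inside the $v$-module while respecting the skeleton, so $G=\Gamma^{\bf d}$ and hence $G\in\mathcal{T}_{\delta}(\Gamma)$ with $\delta={\rm supp}({\bf d})$. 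Since $P$ has at most $2k+1$ vertices and each $H_v$ comes from a finite list, the pairs $(\Gamma,\delta)$ range over a finite set $\mathcal{G}$, giving $\Gamma_{\leq k}\subseteq \bigcup_{(\Gamma,\delta)\in\mathcal{G}}\mathcal{T}^*_{\delta}(\Gamma)$. The reverse inclusion is Corollary~\ref{coro:bound}: every pair arising this way satisfies $\gamma_{\mathcal P}(\Gamma^{\delta})=\gamma_{\mathcal P}(\Gamma^{\bf d})=\gamma_{\mathcal P}(G)\leq k$, and this co-rank is preserved under induced subgraphs.

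I expect the main obstacle to be the bookkeeping of the reassembly step and the passage between the abstract modular decomposition and the concrete ${\bf d}$-operation: one must check carefully that the blow-up $\Gamma$ together with a single support vector $\delta$ simultaneously encodes the twin patterns of all modules, and that connectivity together with the exceptional skeletons $K_1,K_2$ is handled so that $\bigcup_{(\Gamma,\delta)\in\mathcal{G}} \mathcal{T}^*_{\delta}(\Gamma)$ recovers exactly the connected graphs of $\Gamma_{\leq k}$. The genuinely new content beyond Theorem~\ref{equivalence1} is only this finiteness count, since the bound on the twin-free skeleton and the classification of the cograph modules are already furnished by Conjecture~\ref{conj:twin3} and Corollary~\ref{finitecograph}.
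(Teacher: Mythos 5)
Your proof is correct and follows essentially the same route as the paper's own argument: decompose $G$ via its modular decomposition into a twin-free prime skeleton, bound that skeleton by Conjecture~\ref{conj:twin3}, and handle the cograph modules with Corollary~\ref{finitecograph}. The only difference is that you spell out the reassembly step ($G=\Gamma^{\bf d}$, hence $G\in\mathcal{T}_{\delta}(\Gamma)$) and the reverse inclusion via Corollary~\ref{coro:bound}, details the paper compresses into ``putting together these two facts we get the result.''
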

\begin{proof}
Let $G$ be a graph with algebraic co-rank less than or equal to  $k$.
Using the modular decomposition of $G$ we have that $G$ can be decompose in a twin-free (its maximal prime) graph 
with a blow-up of a cograph in each of their vertices.
By Conjecture~\ref{conj:twin3} we have that the size of its maximal twin-free is bounded 
(therefore only a finite number of possible maximal twin-free subgraphs for $G$ exist)
and by Corollary~\ref{finitecograph} only a finite number of cographs $H$ exist, 
such that the cograph used in the blow-up of each vertex is of the form $H^{\bf d}$ for some ${\bf d}\in \mathbb{Z}^{V(H)}$.
Putting together these two facts we get the result.
\end{proof}

Finally, we pose a variant of Conjecture~\ref{conj:twin3}, which we believe is stronger.

\begin{Conjecture}\label{conj:twin1} 
If $\gamma_{\mathcal{P}} (G - v ) = \gamma_{\mathcal{P}} (G)$ for all $v \in V (G)$, then $G$ has at least a pair of twin vertices.
\end{Conjecture}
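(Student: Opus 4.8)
The plan is to first translate the statement into the language of minimum rank, where the algebraic co-rank acquires a transparent geometric meaning, and then to attack the contrapositive with maximum-nullity (Parter--Wiener) tools.

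First I would reformulate $\gamma_{\mathcal P}$ over a field. If $\mathbb K$ is an algebraically closed field, then by the Nullstellensatz $I_i(G,X)$ is trivial over $\mathbb K[X]$ if and only if the $i\times i$ minors of $L(G,X)$ have no common zero, i.e. $\operatorname{rank}_{\mathbb K} L(G,c)\geq i$ for every evaluation $c$. Hence
\[
\gamma_{\mathbb K}(G)=\operatorname{mr}_{\mathbb K}(G):=\min_{c}\operatorname{rank}_{\mathbb K} L(G,c),
\]
the minimum rank of the family of matrices with the fixed signed off-diagonal pattern of $G$ and free diagonal. For $\mathcal P=\mathbb Z$ the same argument over all prime fields gives $\gamma_{\mathbb Z}(G)=\min_{\mathbb K}\operatorname{mr}_{\mathbb K}(G)$, which is exactly the behaviour seen in the $P_3$ example (the characteristic $3$ phenomenon). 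Writing $M_{\mathbb K}(G)=|V(G)|-\operatorname{mr}_{\mathbb K}(G)$ for the maximum corank, the hypothesis $\gamma(G-v)=\gamma(G)$ for all $v$ becomes $M_{\mathbb K}(G-v)=M_{\mathbb K}(G)-1$ for all $v$. So I would prove the contrapositive: if $G$ is twin-free, then some vertex $v$ satisfies $M_{\mathbb K}(G-v)\geq M_{\mathbb K}(G)$, i.e. $G$ has a vertex whose deletion does not decrease the maximum corank.

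Next I would fix a field and an optimal evaluation, that is, a matrix $A=L(G,c)$ with $\operatorname{null}(A)=M_{\mathbb K}(G)$, and analyse its kernel $N=\ker A$. In the simple-graph setting of the surrounding conjectures $A$ is symmetric, so deleting row and column $v$ changes the nullity by at most one, and the classical Parter--Wiener analysis identifies ``Parter or neutral'' vertices through the supports of kernel vectors and the way $N$ meets the hyperplane $x_v=0$. The key point I would try to establish is a dichotomy: either two vertices have coordinate behaviour on $N$ so aligned that the corresponding rows of $L(G,X)$ must agree, forcing $N_G(u)=N_G(w)$ and hence a twin pair; or the kernel has enough freedom that deleting a suitable $v$ leaves the corank unchanged. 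Since the off-diagonal pattern is fixed at the signed adjacency, twin-freeness says precisely that no two rows of the pattern coincide, which is the algebraic obstruction I want to feed into the kernel analysis.

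To organise this I would induct on $|V(G)|$, using $\gamma(H)\leq\gamma(G)$ for induced subgraphs (\cite[Proposition 3.3]{critical}) together with Theorem~\ref{teo:rd}: if $u,w$ are twins then $G$ is a duplication or replication $(G-w)^{{\bf e}_u}$, so the twin structure is exactly what Lemmas~\ref{lema:d} and~\ref{lema:r} detect through the evaluations $x_u=0$ or $x_u=-1$. This should let me convert ``every deletion is corank-decreasing'' into a rigidity statement about these evaluations. I expect the main obstacle to be the gap between a single optimal matrix and the whole family: $M_{\mathbb K}(G-v)$ is a maximum over all matrices realising the pattern of $G-v$, so ruling out a corank-preserving deletion requires controlling every such matrix simultaneously, not just the restriction of one optimal $A$. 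Bridging this gap --- essentially a Parter--Wiener theorem for the rigid, fixed-pattern minimum rank of arbitrary (not necessarily tree-like) twin-free graphs, uniform over the base ring --- is the crux, and is the reason the statement remains conjectural; a plausible reduction is to pass through the modular decomposition used in Theorem~\ref{equivalence1}, so that the prime (twin-free) skeleton can be treated on its own.
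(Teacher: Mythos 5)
You are attempting to prove Conjecture~\ref{conj:twin1}, which the paper poses as an open problem (a strengthening of Conjecture~\ref{conj:twin3}); the paper contains no proof of it, so there is no proof to compare yours against, and the real question is whether your argument closes the conjecture. It does not, and you concede as much in your last sentence. Your minimum-rank translation is sound as far as it goes: over an algebraically closed field $\mathbb{K}$, triviality of $I_i(G,X)$ is, by the Nullstellensatz, equivalent to $\operatorname{rank} L(G,c)\geq i$ for every evaluation $c$ of the diagonal, so $\gamma_{\mathbb{K}}(G)=\min_c \operatorname{rank} L(G,c)$, and the hypothesis $\gamma(G-v)=\gamma(G)$ for all $v$ does become ``every vertex deletion drops the maximum corank.'' But this is only a rephrasing. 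The entire content of your plan sits in the asserted ``dichotomy'': either two vertices have kernel behaviour forcing equal rows (hence twins), or some deletion preserves the maximum corank. That dichotomy \emph{is} the contrapositive of the conjecture, restated in minimum-rank language, and you give no mechanism for proving it --- no lemma, no induction step that actually runs. The induction you sketch via Theorem~\ref{teo:rd} and Lemmas~\ref{lema:d} and~\ref{lema:r} goes in the wrong direction for a twin-free $G$: those results describe graphs that \emph{have} twins, so they cannot be applied to the graph whose structure you need to analyse; at best they handle the (vacuous) case your contrapositive excludes.

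There are also concrete obstructions to the tools you invoke. Parter--Wiener theory is a theory of \emph{symmetric} matrices over fields, with its strong structural form valid for trees; here $L(G_\sigma,X)$ is the generalized Laplacian of a signed multidigraph, which need not be symmetric, and $\mathcal{P}$ is an arbitrary PID (the paper's $P_3$ example shows the answer genuinely depends on the base ring), so kernel-support arguments for one optimal matrix do not transfer. Worse, as you yourself flag, $M_{\mathbb{K}}(G-v)$ is a maximum over \emph{all} matrices realising the pattern of $G-v$, not over principal submatrices of matrices optimal for $G$; ruling out a corank-preserving deletion therefore requires uniform control of a whole affine family, and no analogue of Parter--Wiener for this fixed-off-diagonal, free-diagonal minimum rank problem on general twin-free graphs is known. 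In short, your proposal is a sensible research programme --- the minimum-rank dictionary and the modular-decomposition reduction echo how the paper itself attacks Conjectures~\ref{conj:finite} and~\ref{conj:twin4} in Theorem~\ref{equivalence1} --- but the crux is left exactly as open as the conjecture itself.
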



\section{Critical ideals of graphs with twin vertices}\label{sec:description}

In this section we give a deep description of some of the critical ideals of a graph $G$ obtained by 
duplicating or replicating several times one of their vertices in terms of some of the critical ideals of $G$.
In Section~\ref{rd} we saw that the algebraic co-rank of the graphs $\{d^k(G,v)\}_{k\geq 0}$ and $\{r^k(G,v)\}_{k\geq 0}$ quickly stabilizes.
We will show that their critical ideals regularize, but a little bit slower.
More precisely, if $\gamma_d=\gamma_{\mathcal{P}}(d(G,v))$ and $\lambda\in \{0,1\}$ is a constant that depends on $G$ and $v$, 
then Theorem~\ref{teo:deq} gives a description of 
\[
I_{\gamma_d+k}(d^{k+\lambda+i}(G,v),X)
\]
in terms of the critical ideals of $G$. 
Also, Theorem~\ref{teo:req} gives a similar description of the critical ideals of $I_{\gamma_r+k}(r^{k+\lambda+i}(G,v),X)$, 
where $\gamma_r=\gamma_{\mathcal{P}}(r(G,v))$.

\subsection{The critical ideals of the duplication of vertices}
We begin by giving a description of the critical ideals of $d^k(G,v)$ in terms of the critical ideals of $G$ and some of the minors of $G-v$.
This description generalizes the description of the critical ideals of $d(G,v)$ given in Equation~\ref{eq:d}.

Before doing this, we need to introduce some notation.
Given a subset $S$ of natural numbers, $\binom{S}{l}$ denote the set of all subsets of $S$ of cardinality equal to $l$.
Moreover, if $v$ is a vertex of a signed multidigraph, let 
\[
P^S_{l}(v)=\left\{\prod_{c\in C}x_{v^c}\, : \, C\in \binom{S}{l}\right\},
\]
that is, $P^S_{l}(v)$ is the set of the products of $l$ of the variables associated to the duplication of one vertex of $G$.
By convention we take $P^S_{0}(v)=\{1\}$.
And for simplicity, $P^k_{l}(v)$ denote $P_{l}^{\{0,\ldots ,k\}}(v)$.
Note that $I_{l}(T_k,X)=\langle P^k_{l}(v)\rangle$, where $T_k$ is the trivial graph with $k$ isolated vertices and $v$ is a vertex of $T_k$.
We also recall that $I_{j}(G, X)=\langle 0\rangle$ for all $j> |V(G)|$.

\begin{Lemma}\label{lema:gend}
Let $G$ be a signed multidigraph with $n\geq 2$ vertices and $v\in V(G)$.
If $k,j\geq 1$ and $m={\rm min}(k,j-1)$, then
\begin{eqnarray*}
I_{j}(d^k(G,v),X)&=&\left\langle  \left\{ P_{l}^{k}(v) \cdot I_{j-l}(G, X)|_{x_v=0} \right\}_{l=0}^{m-1}, P_{m}^{k}(v) \cdot I_{j-m}(G- v,X),\right.\\
&& P_{m}^{k}(v)\cdot {\rm minors}_{j-m}({\bf a},L(G- v,X)), \left. P_{m}^{k}(v)\cdot {\rm minors}_{j-m}(L(G- v,X), {\bf b}), S_{j}^{k}(G,v) \right\rangle
\end{eqnarray*}
for all $1\leq j\leq n+k$,
where $S_{j}^{k}(G,v)$ is equal to $P^k_{j}(v)$ when $j\leq k+1$, and equal to
\[
\left\{ {\rm det}(M)\cdot \prod_{t=0}^{k} x_{v^{t}}+{\rm det}(J(0,{\bf a'};M,{\bf b'}))\cdot\sum_{t=0}^{k} \prod_{s\neq t} x_{v^{s}} \, : \, J(x_v,{\bf a'};M,{\bf b'}) \in M_{j-k}(L(G,X))\right\}
\]
when $j> k+1$.
\end{Lemma}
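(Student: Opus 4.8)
The plan is to treat all $k+1$ copies $v^0,\ldots,v^k$ of $v$ simultaneously, rather than iterating Lemma~\ref{lema:d}. Keeping the border vectors ${\bf a},{\bf b}\in\mathcal P^{n-1}$ from the decomposition $L(G,X)=J(x_v,{\bf a};L(G-v,X),{\bf b})$, the key observation is that
\[
L(d^k(G,v),X)=J\big({\rm diag}(x_{v^0},\ldots,x_{v^k}),{\bf a};L(G-v,X),{\bf b}\big),
\]
because every copy has exactly the neighbours of $v$ and no two copies are adjacent. First I would use the observation made after Lemma~\ref{lema:det1} that every $j\times j$ submatrix is again a join $J(P,{\bf a}';Q,{\bf b}')$, where $P$ is the $p\times p'$ submatrix of ${\rm diag}(x_{v^0},\ldots,x_{v^k})$ selected by a row set $R$ and a column set $C$, and $Q$ is a $q\times q'$ submatrix of $L(G-v,X)$ with $p+q=p'+q'=j$. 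Lemma~\ref{lema:det1} then kills every minor with $|p-p'|\geq 2$, so only three families contribute: $p=p'$, $p=p'+1$, and $p'=p+1$.

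The heart of the argument is the evaluation of the bordered diagonal blocks produced by Lemma~\ref{lema:det1}. I would establish the elementary sub-lemma that ${\rm det}(P)=\prod_{i\in R}x_{v^i}$ if $R=C$ and $0$ otherwise; that ${\rm det}\,[\,P\ {\bf 1}^T\,]=\pm\prod_{c\in C}x_{v^c}$ precisely when $C\subseteq R$; that ${\rm det}\left[\begin{smallmatrix}P\\{\bf 1}\end{smallmatrix}\right]=\pm\prod_{r\in R}x_{v^r}$ precisely when $R\subseteq C$; and that the symmetric border satisfies
\[
{\rm det}\begin{bmatrix}P&{\bf 1}^T\\{\bf 1}&0\end{bmatrix}=
\begin{cases}
-\sum_{i\in R}\prod_{j\in R\setminus\{i\}}x_{v^j} & \text{if }R=C,\\
\pm\prod_{i\in R\cap C}x_{v^i} & \text{if } |R\,\triangle\,C|=2,\\
0 & \text{if } |R\,\triangle\,C|\geq 4.
\end{cases}
\]
Each of these is a monomial generating some $P^k_l(v)$. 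Substituting them into the three cases of Lemma~\ref{lema:det1} writes every minor as a monomial of some fixed degree $l$ from $P^k_l(v)$ times a minor of $L(G-v,X)$, of $[\,{\bf a}^T\ L(G-v,X)\,]$, of $[\,L(G-v,X);{\bf b}\,]$, or of a join $J(0,{\bf a}';Q,{\bf b}')$, of complementary size $j-l$; the case $p=p'$ with $R=C$ is the only one that produces a genuine \emph{sum} of two such terms (degrees $p$ and $p-1$).

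I would then regroup by the monomial degree $l$. For $l\le m-1$ all four minor types occur, and Proposition~\ref{eq:g} recognises their span as $I_{j-l}(G,X)|_{x_v=0}$, giving the type-1 generators. At $l=m$ the degree forces $C=\{0,\ldots,k\}$ in the $\mathbf a$-case, $R=\{0,\ldots,k\}$ in the $\mathbf b$-case, or $R=C=\{0,\ldots,k\}$ in the symmetric case, so only the $L(G-v,X)$-, $\mathbf a$- and $\mathbf b$-minors survive: these are precisely the type-2, 3 and 4 generators. Crucially, a join-$(0)$-minor of size $j-m$ can never appear alone (that would need $p=m+1$ in a non-symmetric case, impossible since $p\le k+1$); it appears only \emph{bundled} together with the full product $\prod_{t=0}^k x_{v^t}$ inside the symmetric minor with $R=C=\{0,\ldots,k\}$, which is exactly $S^k_j(G,v)$. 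This explains both the asymmetry between types 1 and 2--4 and the shape of $S^k_j$; when $j\le k+1$ the block $Q$ has nonpositive size, only the pure products remain, and $S^k_j(G,v)=P^k_j(v)$, matching the first clause of its definition.

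To finish I would prove both inclusions. The inclusion $\subseteq$ is immediate from the case analysis, since every generating minor has been placed in the claimed ideal. For $\supseteq$ I would realise each listed generator as an explicit minor or small combination: the $\mathbf a$-, $\mathbf b$- and $L(G-v,X)$-minors with their monomials come directly from the cases $p'=p+1$, $p=p'+1$ and $(p=p',R=C)$, while the individual products feeding the type-1 generators are isolated by subtracting suitable non-diagonal minors (those with $R\ne C$, $|R\,\triangle\,C|=2$) from the symmetric ones to cancel the unwanted join-$(0)$-term. \textbf{The main obstacle} is exactly this bookkeeping: proving the bordered-diagonal sub-lemma with the correct vanishing pattern and signs, and then verifying that the degree-$l$ pieces reassemble cleanly into $I_{j-l}(G,X)|_{x_v=0}$ for $l<m$ while collapsing into the single family $S^k_j(G,v)$ at the top degree. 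An induction on $k$ via Lemma~\ref{lema:d} is available as a fallback, but the one-shot computation with the full $(k+1)\times(k+1)$ diagonal block keeps $G-v$ fixed and makes the regrouping transparent, so I would prefer it.
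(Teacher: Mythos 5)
Your proposal is correct and follows essentially the same route as the paper's own proof: the paper also writes $L(d^k(G,v),X)=J({\rm diag}(x_{v^0},\ldots,x_{v^k}),{\bf a};L(G-v,X),{\bf b})$, applies Lemma~\ref{lema:det1} to classify all $j\times j$ minors by the number of rows/columns taken from the diagonal block (your $(R,C)$ bookkeeping is the paper's Cases I--III, with ``zero rows of $P$'' in place of $|R\,\triangle\,C|$), and then regroups by monomial degree, using the same subtraction trick with the off-diagonal ($|R\,\triangle\,C|=2$) minors to isolate the products $\pi\cdot\det(Q)$ for the reverse inclusion. The only slip is your justification that no standalone join-$(0)$-minor appears at degree $m$ when $m=j-1<k$ (there $p=m+1\le k+1$ is possible), but the conclusion still holds since a size-one minor of $J(0,{\bf a};L(G-v,X),{\bf b})$ meeting both borders is the zero entry, i.e.\ ${\rm minors}_1({\bf a},L(G-v,X),{\bf b})=\emptyset$ by the paper's convention.
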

The proof of this lemma is technical and very similar to those arguments given in the previous proofs. 
It is included in Section 3.4 at the end of this section.

\begin{Remark}
Note that $I_{j}(G, X)|_{x_v=0}$ is equal to
\[
\langle {\rm minors}_{j}(L(G- v,X)), {\rm minors}_{j}({\bf a}, L(G- v,X)),
 {\rm minors}_{j}(L(G- v,X), {\bf b}), {\rm minors}_{j}({\bf a},L(G- v,X), {\bf b}) \rangle
\]
and the $i$-{\it th} critical ideal $I_i(T_{k+1},X)$ of the graph with $k+1$ isolated vertices is equal to $\langle P_{i}^{k}(v)\rangle$.
Moreover, if $m={\rm min}(k,j-1)$, then 
\[
I_j(d^k(G,v),X)|_{x_{v}=0}=\left\langle \left\{ P_{i}^{\{1,\ldots,k\}}(v) \cdot I_{j-i}(G, X)|_{x_v=0} \right\}_{i=0}^{m} \right\rangle. 
\]
By~\cite[Proposition 3.4]{critical} the $j$-{\it th} critical ideal of the disjoint union of $T_{k+1}$ and $G$ is equal to 
\[
I_j(T_{k+1}\sqcup G,X)=\left\langle\, \bigcup_{i=0}^j I_i(T_{k+1},X)\cdot I_{j-i}(G,X)\,\right\rangle=\left\langle\, \bigcup_{i=0}^j P_{i}^{k}(v)\cdot I_{j-i}(G,X)\,\right\rangle.
\]
That is, $I_j(d^k(G,v),X)|_{x_{v}=0}$ behaves almost equal as the $j$-{\it th} critical ideal of the disjoint union of $T_{k+1}$ and $G$. 
\end{Remark}

In the next example we show how to use the description of $I_j(d^k(G,v),X)|_{x_{v}=0}$.
\begin{Example}
Let $Q_3$ be the hypercube with $V(Q_3)=\{v_i\}_{i=1}^8$. 
The reader can check that $\gamma_{\mathbb Z}(Q_3)=4$, $\gamma_{\mathbb Z}(d(Q_3,v_8))=5$.
Moreover 
\[
I_7(d(Q_3,v_8),X)|_{x_{8}=0}=\langle x_{8^1}\cdot I_6(Q_3,X)|_{x_8=0}, I_7(Q_3,X)|_{x_8=0}\rangle,
\] 
where $I_6(Q_3,X)_{x_8=0}=\left\langle x_1-x_6, x_2-3x_7, x_3-x_6, x_4-x_7, x_5-x_7, x_6x_7-1\right\rangle$ and
\begin{eqnarray*}
I_7(Q_3,X)|_{x_8=0}&=&\langle
x_2x_4x_6-x_4x_5x_6-x_4x_6x_7-x_5x_6x_7-x_2-x_4+2x_5+2x_7,\\
&&
x_2x_3x_5-x_3x_4x_5-x_3x_4x_7-x_3x_5x_7-x_2+2x_4-x_5+2x_7,\\
&&
x_1x_2x_7-x_1x_4x_5-x_1x_4x_7-x_1x_5x_7-x_2+2x_4+2x_5-x_7,\\
&&
x_1x_3x_7-x_1x_4x_6+x_3x_4x_6-x_1x_6x_7+x_1-2x_3+x_6,\\
&& 
x_1x_3x_5+x_1x_4x_6-x_3x_4x_6-x_3x_5x_6-2x_1+x_3+x_6,\\
&&
x_1x_4x_5x_6+x_1x_4x_6x_7+x_1x_5x_6x_7-x_1x_5-x_5x_6-2x_4x_6-2x_1x_7+3,\\
&&
x_3x_4x_5x_6+x_3x_4x_6x_7+x_3x_5x_6x_7-2x_3x_5-2x_4x_6-x_3x_7-x_6x_7+3\rangle.
\end{eqnarray*}
\end{Example}

We now are ready to give a more accurate description of some critical ideals of $d^{i+k}(G,v)$.
Given $r,s\geq 0$, let
\[ 
\lambda(r,s)=
\begin{cases}
0 & \text{ if } r=s,\\
1 & \text{ otherwise}.
\end{cases}
\]
As the next theorem shows, this constant plays the role of a regularity constant in the sense that the behavior of the critical ideals 
$I_{\gamma_d+k}(d^{k+\lambda}(G,v),X)$ is regular.
Moreover, $\lambda=0$ if and only if $\gamma_{\mathcal P}(G- v)=\gamma_{\mathcal P}(d(G,v))$,
that is, $\lambda$ indicates whether the removal of $v$ or the duplication of $v$  yields a change in the algebraic co-rank.

\begin{Theorem}\label{teo:deq}
Let $G$ be a signed multidigraph, $v$ a vertex of $G$, $\gamma_d=\gamma_{\mathcal P}(d(G,v))$, $\gamma_v=\gamma_{\mathcal P}(G- v)$
and $\lambda=\lambda(\gamma_{v},\gamma_d)$.
If $\gamma_{\mathcal P}(G) \geq 2$, then $0\leq \gamma_d-\gamma_v\leq 2$ and
\[
I_{\gamma_d+k}(d^{k+\lambda+i}(G,v),X)= \left\langle  \left\{ P_{l}^{k+\lambda+i}(v) \cdot I_{\gamma_d+k-l}(G, X)|_{x_v=0} \right\}_{l=0}^{k}\right\rangle 
\]
for all $k\geq 1$ and $i\geq 0$.
\end{Theorem}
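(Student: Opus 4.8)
The plan is to establish the numerical bounds first and then to read off the stated equality from the general description in Lemma~\ref{lema:gend}, via an absorption argument controlled entirely by the exponent $k$.

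\emph{The bounds.} The inequality $\gamma_v\le\gamma_d$ is immediate from monotonicity of the algebraic co-rank under induced subgraphs: $G-v$ is induced in $G$ and $G$ is induced in $d(G,v)$, so $\gamma_v\le\gamma_{\mathcal P}(G)\le\gamma_d$, which incidentally also gives $\gamma_d\ge\gamma_{\mathcal P}(G)\ge 2$. For $\gamma_d\le\gamma_v+2$ I would use Lemma~\ref{lema:d}, which identifies $\gamma_d$ with the largest $j$ for which $I_j(G,X)|_{x_v=0}$ is trivial, and prove the containment $I_j(G,X)|_{x_v=0}\subseteq I_{j-2}(G-v,X)$. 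Starting from the four families of generators of $I_j(G,X)$ in Proposition~\ref{eq:g} evaluated at $x_v=0$, a cofactor expansion along the constant column ${\bf a}$ (resp. row ${\bf b}$) shows that ${\rm minors}_j({\bf a},L(G-v,X))$ and ${\rm minors}_j(L(G-v,X),{\bf b})$ lie in $I_{j-1}(G-v,X)$, while expanding ${\rm minors}_j({\bf a},L(G-v,X),{\bf b})$ along both the constant row and column places it in $I_{j-2}(G-v,X)$; together with ${\rm minors}_j(L(G-v,X))=I_j(G-v,X)\subseteq I_{j-2}(G-v,X)$ this gives the containment. Taking $j=\gamma_v+3$, the ideal $I_{\gamma_v+1}(G-v,X)$ is proper, hence so is $I_{\gamma_v+3}(G,X)|_{x_v=0}$, and Lemma~\ref{lema:d} then forces $\gamma_d\le\gamma_v+2$.

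\emph{The equality.} Set $N=k+\lambda+i$ and $j=\gamma_d+k$, and apply Lemma~\ref{lema:gend} with $N$ duplications; the index $j$ is admissible because $\gamma_d\le n$ by Corollary~\ref{coro:bound}. Two facts organize the simplification. First, $I_{\gamma_d}(G,X)|_{x_v=0}=\langle 1\rangle$ by the definition of $\gamma_d$ and Lemma~\ref{lema:d}; hence every summand $P_l^{N}(v)\cdot I_{j-l}(G,X)|_{x_v=0}$ with $l\ge k$ equals $\langle P_l^{N}(v)\rangle$, and $\langle P_l^{N}(v)\rangle\subseteq\langle P_k^{N}(v)\rangle$ for $l\ge k$. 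Second, any product of at least $k$ of the variables $\{x_{v^c}\}_{c=0}^{N}$ is divisible by some generator of $P_k^{N}(v)$, so it lies in $\langle P_k^{N}(v)\rangle$. The second fact shows that every remaining term of Lemma~\ref{lema:gend}, namely $P_m^{N}(v)\cdot I_{j-m}(G-v,X)$, the two ${\bf a}$- and ${\bf b}$-minor terms, and each generator of $S_j^{N}(G,v)$ (whose monomials in the $x_{v^c}$ are products of at least $k$ of the variables in both defining regimes $j\le N+1$ and $j>N+1$), is contained in $\langle P_k^{N}(v)\rangle$.

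It then remains to confirm that $\langle P_k^{N}(v)\rangle$ itself lies in $I_j(d^{N}(G,v),X)$ and coincides with the $l=k$ summand, and this is exactly where $\lambda$ is needed. If $\lambda+i\ge 1$ then $N\ge k+1$, so $m=\min(N,j-1)\ge k+1$ (using $j-1=\gamma_d+k-1\ge k+1$), and the $l=k$ summand already appears in the first family $\{P_l^{N}(v)\cdot I_{j-l}(G,X)|_{x_v=0}\}_{l=0}^{m-1}$; together with the two facts above the whole ideal collapses onto the generators $l=0,\dots,k$. The boundary case $N=k$ forces $\lambda=0$ and $i=0$, whence $\gamma_v=\gamma_d$; here $m=k$ and the first family only reaches $l=k-1$, but the deletion term $P_k^{k}(v)\cdot I_{\gamma_d}(G-v,X)=P_k^{k}(v)\cdot I_{\gamma_v}(G-v,X)$ equals $\langle P_k^{k}(v)\rangle$ precisely because $I_{\gamma_v}(G-v,X)=\langle 1\rangle$, which supplies the missing top summand. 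I expect the main difficulty to be the bookkeeping of this case analysis: pinning down $m$, verifying that the $S_j^{N}(G,v)$ generators genuinely carry a factor in $\langle P_k^{N}(v)\rangle$ in both regimes, and making precise that $\lambda$ serves only to push $N$ past $k$ except when $I_{\gamma_v}(G-v,X)$ is already trivial and can furnish the top summand by itself.
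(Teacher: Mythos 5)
Your proposal is correct and follows essentially the same route as the paper's proof: both establish $0\leq\gamma_d-\gamma_v\leq 2$ from the containment $I_j(G,X)|_{x_v=0}\subseteq I_{j-2}(G-v,X)$ together with Lemma~\ref{lema:d}, then apply Lemma~\ref{lema:gend} with $j=\gamma_d+k$, use the triviality of $I_{\gamma_d}(G,X)|_{x_v=0}$ to absorb all terms indexed $l\geq k$ (and the ${\bf a}$-, ${\bf b}$-minor and $S$-terms) into $\langle P_k^{N}(v)\rangle$, and split into the cases where the top summand comes from the first family ($N>k$, i.e.\ $\lambda+i\geq 1$) versus from the deletion term $P_k^{k}(v)\cdot I_{\gamma_v}(G-v,X)$ ($N=k$, forcing $\lambda=0$ and hence $\gamma_v=\gamma_d$). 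Your writeup even supplies details the paper leaves implicit (the cofactor-expansion proof of the containment, and the cleaner $N=k$ versus $N\geq k+1$ bookkeeping), but the underlying argument is the same.
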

\begin{proof}
Since $I_j(G,X)|_{x_v=0}\subseteq I_{j-2}(G- v, X)$, by Lemma~\ref{lema:d} we have that $0\leq \gamma_d-\gamma_v\leq 2$.
Note that $\gamma_d-\gamma_v$ measures the number of steps in which the algebraic co-rank of the set of graphs $\{G-v,d^k(G,v)_{k\geq 0}\}$ stabilizes.
The inequality $0\leq \gamma_d-\gamma_v\leq 2$ says that this happens in at most two steps.
Now, applying Lemma~\ref{lema:gend} with $k=k+\lambda+i$ and $j=\gamma_d+k$,
we get that $I_{\gamma_d+k}(d^{k+\lambda+i}(G,v),X)$ is equal to 
\begin{gather*}
\left\langle \left\{ P_{l}^{k+\lambda+i}(v) \cdot I_{\gamma_d+k-l}(G, X)|_{x_v=0} \right\}_{l=0}^{m-1}, P_{m}^{k+\lambda+i}(v) \cdot I_{\gamma_d+k-m}(G- v, X),\right.\\
\left. P_{m}^{k+\lambda+i}(v)\cdot {\rm minors}_{\gamma_d+k-m}({\bf a},L(G- v, X)),P_{m}^{k+\lambda+i}(v)\cdot {\rm minors}_{\gamma_d+k-m}(L(G- v, X), 
S_{\gamma_d+k}^{k+\lambda+i}(G,v)\right\rangle.
\end{gather*}
On the other hand, since $\gamma_d-1\geq \lambda$, $m={\rm min}(k,j-1)={\rm min}(k+\lambda+i,\gamma_d+k-1)=
k+{\rm min}(\lambda+i,\gamma_d-1)\geq k+\lambda$.
Also, by Lemma~\ref{lema:d}, $I_{\gamma_d}(G,X)|_{x_v=0}=\langle 1\rangle$.
Note that, if $\lambda=0$, then $m=k$, $\gamma_v=\gamma_d$, $I_{\gamma_d}(G-v, X)$ is trivial, 
and $ P_{m}^{k+i}(v) \cdot I_{\gamma_d}(G- v, X)= P_{m}^{k+i}(v)$.

Therefore, if we assume that $\lambda=0$,
\begin{eqnarray*}
I_{\gamma_d+k}(d^{k+i}(G,v),X) &=& \left\langle \left\{ P_{l}^{k+i}(v) \cdot I_{\gamma_d+k-l}(G, X)|_{x_v=0} \right\}_{l=0}^{k-1}, P_{k}^{k+i}(v)\right\rangle\\
&=& \left\langle \left\{ P_{l}^{k+i}(v) \cdot I_{\gamma_d+k-l}(G, X)|_{x_v=0} \right\}_{l=0}^{k}\right\rangle.
\end{eqnarray*}
Otherwise ($\lambda=1$), taking $l=k$ we get that $P_{k}^{k+i+1}(v) \cdot I_{\gamma_d}(G, X)|_{x_v=0}=P_{k}^{k+i+1}(v)$ and therefore
\begin{eqnarray*}
I_{\gamma_d+k}(d^{k+i+\lambda}(G,v),X)=I_{\gamma_d+k}(d^{k+i+1}(G,v),X)
&=& \left\langle \left\{ P_{l}^{k+i+1}(v) \cdot I_{\gamma_d+k-l}(G, X)|_{x_v=0} \right\}_{l=0}^{k}\right\rangle\\
&=& \left\langle \left\{ P_{l}^{k+i+\lambda}(v) \cdot I_{\gamma_d+k-l}(G, X)|_{x_v=0} \right\}_{l=0}^{k}\right\rangle.
\end{eqnarray*}
\end{proof}

When $k=1$, Theorem~\ref{teo:deq} can be reduced to the following simpler form
\[
I_{\gamma_d+1}(d^{i+1}(G,v),X)= \langle x_{v^0},x_{v^1}, \ldots, x_{v^{i+1}}, I_{\gamma_d+1}(G,X)|_{x_v=0}\rangle,
\]
for all $i\geq \lambda$, which is similar to Lemma~\ref{lema:d}.
We recall that $\lambda=\lambda(\gamma_{\mathcal P}(G-v), \gamma_{\mathcal P}(d(G,v)))$.

\begin{Remark}\label{half}
Given a fixed integer $k \geq \lambda+1$, we have that Theorem~\ref{teo:deq} implies that
\[
I_{\gamma_d+j}(d^{k}(G,v),X)=\left\langle \left\{ P_{l}^{k}(v) \cdot I_{\gamma_d+j-l}(G, X)|_{x_v=0} \right\}_{l=0}^{j} \right\rangle,
\]
for all $j$ such that $1\leq j\leq k-\lambda$.
That is, Theorem~\ref{teo:deq} does not describe all the critical ideals of $d^{k}(G,v)$.
\end{Remark}

In order to get a better understanding of Theorem~\ref{teo:deq}, we present the following example.

\begin{Example}\label{example:deq}
Let $G$ be the cycle (see Figure~\ref{fig:03}) with four vertices and sign $\sigma$ given by
\[
\sigma(e)=
\begin{cases}
-1 & \text{ if } e=v_1v_4, v_4v_3,\\
1 & \text{ otherwise.}
\end{cases}
\]
By using a computer algebra system, we can verify that $\gamma=\gamma_{\mathbb{Z}}(G)=2$, 
$\gamma_{v_1}=\gamma_{\mathbb{Z}}(G- v_1)=2$, and $\gamma_d=\gamma_{\mathbb{Z}}(d(G,v_1))=2$.
Thus $\lambda(\gamma-\gamma_{v_1},\gamma_d-\gamma)=\lambda(0,0)=0$.
Moreover, it can be checked that $I_3(G,X)=\langle x_2 + x_4, x_1-x_3,x_3x_4+2 \rangle$ and 
$I_4(G,X)=\langle x_1x_2x_3x_4+x_1x_2+x_2x_3-x_1x_4-x_3x_4-4 \rangle$.
\begin{figure}[h]
\begin{center}
\begin{tabular}{c@{\extracolsep{2cm}}c}
\multirow{9}{20mm}{
\vspace{25mm}
\begin{tikzpicture}[scale=1, line width=0.9pt]
\tikzstyle{every node}=[minimum width=4pt, inner sep=0pt, circle]
\draw (45:1) node (v1) [draw,fill=gray] {};
\draw (135:1) node (v2) [draw,fill=gray] {};
\draw (225:1) node (v3) [draw,fill=gray] {};
\draw (315:1) node (v4) [draw,fill=gray] {};
\draw[->,red, bend right] (v1) edge (v4);
\draw[->,bend right] (v4) edge (v1);
\draw[->,bend right] (v2) edge (v3);
\draw[->,bend right] (v3) edge (v2);
\draw[->,bend right] (v1) edge (v2);
\draw[->,red, bend right] (v2) edge (v1);
\draw[->,bend right] (v3) edge (v4);
\draw[->,bend right] (v4) edge (v3);
\draw (v2)+(-0.2,0.2) node () {\small $v_1$};
\draw (v3)+(-0.2,-0.2) node () {\small $v_2$};
\draw (v4)+(0.2,-0.2) node () {\small $v_3$};
\draw (v1)+(0.2,0.2) node () {\small $v_4$};
\draw[red] (0,0.6) node () {\small $-$};
\draw[red] (0.3,0) node () {\small $-$};
\end{tikzpicture}
}
&
\\
&
$
L(G, X)=
\left[\begin{array}{cccccc}
x_1 & -1 & 0 &  1 \\
 -1 & x_2 & -1 & 0  \\
 0 & -1 &  x_3 & -1 \\
-1 &   0 &  1 &  x_4 
\end{array}\right]
$
\end{tabular}
\end{center}
\caption{A signed multidigraph $G$ with four vertices and its generalized Laplacian matrix.}
\label{fig:03}
\end{figure}

\noindent Since $I_3(G,X)|_{x_{1}=0}=\langle 2,x_3,x_2 + x_4 \rangle$, Theorem~\ref{teo:deq} implies that 
\[
I_{3}(d^{i+1}(G,v_1),X)=\left\langle P_{1}^{i+1}(v_1), I_3(G,X)|_{x_{1}=0} \right\rangle=\left\langle \{x_{1^l}\}_{l=0}^{i+1}, 2,x_3,x_2 + x_4 \right\rangle \text{ for all }i\geq 0.
\]
Also, since $I_4(G,X)|_{x_{1}=0}=\langle x_2x_3-x_3x_4-4 \rangle$, by Theorem~\ref{teo:deq} 
{\small
\begin{eqnarray*}
I_4(d^{i+2}(G,v_1),X)&\!\!\!\!=\!\!\!\!& \left\langle P_{2}^{i+2}(v_1), P_{1}^{i+2}(v_1)\cdot I_3(G,X)|_{x_{1}=0}, I_4(G,X)|_{x_{1}=0}\right\rangle\\
&\!\!\!\!=\!\!\!\!& \langle \{x_{1^l}x_{1^{l'}}\}_{0\leq l<l'\leq i+2},\{2x_{1^l}\}_{l=0}^{i+2}, \{x_{1^l}x_3 \}_{l=0}^{i+2},
\{x_{1^l}(x_2\!+\!x_4)\}_{l=0}^{i+2}, x_2x_3\!-\!x_3x_4\!-\!4 \rangle \text{ for all }i\geq 0.
\end{eqnarray*}
}
Finally, since $I_{j}(G,X)=\langle 0 \rangle$ for all $j\geq 5$,
{\small
\begin{eqnarray*}
I_{k+2}(d^{k+i}(G,v_1),X)&\!\!\!\!=\!\!\!\!& \langle P_{k}^{k+i}(v_1), P_{k-1}^{k+i}(v_1)\cdot I_{3}(G,X)|_{x_{1}=0}, P_{k-2}^{k+i}(v_1)\cdot I_{4}(G,X)|_{x_{1}=0}\rangle\\
&\!\!\!\!=\!\!\!\!& \langle P_{k}^{k+i}(v_1),\{2, x_3,x_2\!+\!x_4\}\cdot P_{k-1}^{k+i}(v_1), (x_2x_3\!-\!x_3x_4\!-\!4)\cdot P_{k-2}^{k+i}(v_1)\rangle \text{ for all }i\geq 0,k\geq 1.
\end{eqnarray*}
}
Moreover, the reader can check that 
\begin{eqnarray*}
I_4(d(G,v_1), X)&=&\langle x_{v_1^0} (x_2+x_4), x_{1^1} (x_2+x_4), x_{1^0} (x_3x_4+2),  x_2x_3-x_3x_4-4, \\
&& x_{1^0} x_{1^1}x_4 +2x_{1^0}+2x_{1^1}, x_{1^0}x_3 + x_{1^1} x_3-x_{1^0} x_{1^1} \rangle\\
& \neq & \langle P_{2}^{i+1}(v_1), P_{1}^{i+1}(v_1)\cdot I_3(G,X)|_{x_{1}=0}, I_4(G,X)|_{x_{1}=0} \rangle.
\end{eqnarray*}

That is, Theorem~\ref{teo:deq} cannot be improved.
\end{Example}

The diagonal entries of twin vertices in the Laplacian matrix of $d^k(G,v)$ are equal (twin vertices have the same degree).
Therefore, an important case of the critical ideals of $d^k(G,v)$ is given by considering the same variable associated to duplicated vertices.
In this case Theorem~\ref{teo:deq} reduces to the following form:
If $\gamma_{\mathcal P}(G) \geq 2$ and $x_v$ is the variable associated to the twins of $v$, then
\[
I_{\gamma_d+k}(d^{k+\lambda+i}(G,v),X)= \left\langle  \left\{ x_v^l \cdot I_{\gamma_d+k-l}(G, X)|_{x_v=0} \right\}_{l=0}^{k}\right\rangle 
\]
for all $k\geq 1$ and $i\geq 0$. 


\subsection{The critical ideals of the replication of vertices}
We now give the description of the critical ideals of $r^k(G,v)$.
This part is structured similarly to the part of the critical ideals of $d^k(G,v)$.
Given a subset $S$ of the natural numbers and a vertex $v\in V(G)$, let 
\[
\widetilde{P}_{l}^{S}(v)=\{\prod_{c\in C}x_{v^c}+1\, : \, C\in \binom{S}{l}\}.
\]
For convention $\widetilde{P}_{0}^S(v)=\{1\}$.
Also, for simplicity $\widetilde{P}_{l}^{\{0\}\cup [k]}(v)$ will be denoted by $\widetilde{P}_{l}^{k}(v)$.
Note that $I_{l+1}(K_k,X)=\langle \widetilde{P}^k_{l}(v)\rangle$ for all $1\leq l\leq k-2$, where $K_k$ is the complete graph with $k$ vertices.
We will use similar arguments to those used in the proof of Lemma~\ref{lema:gend}.

\begin{Lemma}\label{lema:genr}
Let $G$ be a signed multidigraph with $n\geq 2$ vertices, and $v\in V(G)$. 
If $k,j\geq 1$ and $m=\min(k,j-1)$, then
{
\begin{eqnarray*}
I_j(r^k(G,v),X)&=&\left\langle\{\widetilde{P}_{l}^{k}(v) \cdot I_{j-l}(G,X)|_{x_v=-1}\}_{l=0}^{m-1}, \widetilde{P}_{m}^{k}(v) \cdot I_{j-m}(G\!-\!v,X),\right.\\
&& \left.\widetilde{P}_{m}^{k}(v)\cdot {\rm minors}_{j-m}({\bf a},L(G\!-\!v,X)), \widetilde{P}_{m}^{k}(v)\cdot {\rm minors}_{j-m}(L(G\!-\!v,X), {\bf b}),\widetilde{S}_j^k\right\rangle
\end{eqnarray*}
}
for all $1\leq j\leq n+k$, where $\widetilde{S}_j^k$ is equal to 
\[
\left\{\prod_{s=1}^{j}(x_{v^{l_s}}\!+\!1)\!-\!\sum_{s=1}^{j}\prod_{t\neq s}(x_{v^{l_t}}\!+\!1): 0\leq l_1\!<\cdots<l_j\leq k \right\},
\]
when $j\leq k+1$, and equal to 
\[
\left\{ \det(Q)\cdot\prod_{t=0}^{k}(x_{v^{t}}\!+\!1)\!+\!\det(J(-1,{\bf a'};Q,{\bf b'}))\sum_{t=0}^{k}\prod_{s\neq t}(x_{v^{s}}\!+\!1): J(x_v,{\bf a'};Q,{\bf b'})\in M_{j\!-\!k}(L(G,X)) \right\},
\]
when $j> k+1$.
\end{Lemma}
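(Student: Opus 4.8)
The plan is to mirror the proof of Lemma~\ref{lema:gend}, replacing the role played there by the trivial graph $T_{k+1}$ with the complete graph $K_{k+1}$. First I would record that replicating $v$ a total of $k$ times makes $v^0,v^1,\ldots,v^k$ into a clique whose members share the common external connection vectors ${\bf a},{\bf b}$ of $v$; hence
\[
L(r^k(G,v),X)=J\big(L(K_{k+1},X),{\bf a};L(G-v,X),{\bf b}\big),
\]
where $L(K_{k+1},X)={\rm diag}(x_{v^0}+1,\ldots,x_{v^k}+1)-{\bf 1}{\bf 1}^T$ and $L(G,X)=J(x_v,{\bf a};L(G-v,X),{\bf b})$. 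Every $j\times j$ submatrix of this join is again a join, so by Lemma~\ref{lema:det1} its determinant vanishes unless the number $p$ of rows and the number $p'$ of columns taken from the clique block differ by at most one. This reduces the whole computation to the three cases $p=p'$, $p=p'+1$, $p'=p+1$ of Lemma~\ref{lema:det1}, exactly as in the duplication argument, the only structural change being the use of the identity ${\rm det}(J(-1,{\bf a'};Q,{\bf b'}))=-{\rm det}(Q)+{\rm det}(J(0,{\bf a'};Q,{\bf b'}))$ in place of its $x_v=0$ analogue.

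Next I would analyze the minors of the clique block itself, which is the new ingredient. Writing $C$ for the row-indices and $C'$ for the column-indices chosen inside the clique, any index in $C\setminus C'$ produces an all-$(-1)$ row and any index in $C'\setminus C$ an all-$(-1)$ column; hence the determinant is zero as soon as $|C\setminus C'|=|C'\setminus C|\geq 2$, and the surviving contributions come only from the principal minors and the once-bordered minors. By the matrix-determinant-lemma expansion of ${\rm diag}(x+1)-{\bf 1}{\bf 1}^T$, a principal $l\times l$ minor on indices $\{l_1,\ldots,l_l\}$ equals $\prod_s(x_{v^{l_s}}+1)-\sum_s\prod_{t\neq s}(x_{v^{l_t}}+1)$, while a clique minor bordered by the join vector ${\bf 1}$ collapses to the pure product $\prod_{c}(x_{v^c}+1)$, an element of $\widetilde{P}^k_l(v)$. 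Feeding these back through the three Lemma~\ref{lema:det1} cases, a submatrix using $l$ clique rows and columns together with a minor of the $G-v$ block contributes either $\widetilde{P}^k_l(v)\cdot I_{j-l}(G,X)|_{x_v=-1}$ or, at the boundary $l=m=\min(k,j-1)$, the terms $\widetilde{P}^k_m(v)$ times a minor of $L(G-v,X)$ or its bordered versions ${\rm minors}_{j-m}({\bf a},L(G-v,X))$ and ${\rm minors}_{j-m}(L(G-v,X),{\bf b})$.

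The last and most delicate point is the term $\widetilde{S}^k_j$, which collects the contributions in which the \emph{whole} clique determinant is coupled to the $G-v$ block. When $j\leq k+1$ an entire $j\times j$ principal block fits inside the clique, and its determinant is exactly the first displayed form $\prod_s(x_{v^{l_s}}+1)-\sum_s\prod_{t\neq s}(x_{v^{l_t}}+1)$. When $j>k+1$ one is forced to use all $k+1$ clique rows and columns together with a submatrix $Q$ of $G-v$; expanding ${\rm det}\,J(L(K_{k+1},X),{\bf a'};Q,{\bf b'})$ by the matrix-determinant lemma for the rank-one perturbation ${\bf 1}{\bf 1}^T$ yields precisely ${\rm det}(Q)\prod_{t=0}^k(x_{v^t}+1)+{\rm det}(J(-1,{\bf a'};Q,{\bf b'}))\sum_{t=0}^k\prod_{s\neq t}(x_{v^s}+1)$, the second form of $\widetilde{S}^k_j$. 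I expect this rank-one bookkeeping — keeping track of the two summands and of the signs forced by the $-1$ off-diagonal entries — to be the main obstacle; everything else is a direct transcription of the duplication proof with $x_v=0$ replaced by $x_v=-1$ and $P^k_l(v)$ replaced by $\widetilde{P}^k_l(v)$. Finally, the range $1\leq j\leq n+k$ and the cutoff $m=\min(k,j-1)$ follow because at most $k+1$ rows may be taken from the clique and at most $n$ from $G-v$; alternatively, one may run the entire argument by induction on $k$, using $r^k(G,v)=r(r^{k-1}(G,v),v)$ together with Lemma~\ref{lema:r} as the inductive step.
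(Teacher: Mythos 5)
Your proposal is correct and follows essentially the same route as the paper's own proof: the same join decomposition $L(r^k(G,v),X)=J\bigl(L(K_{k+1},X),{\bf a};L(G-v,X),{\bf b}\bigr)$, the same case analysis via Lemma~\ref{lema:det1} according to how many rows and columns are drawn from the clique block, and the same classification of clique-block submatrices by their number of all-$(-1)$ rows, leading to the two forms of $\widetilde{S}_j^k$. The only cosmetic differences are that you evaluate the clique determinants by multilinearity (the rank-one perturbation ${\rm diag}(x_{v^i}+1)-{\bf 1}{\bf 1}^T$) where the paper cites the known formula for $\det(L(K_{k+1},X))$ and its bordered variant, and your closing suggestion of an induction on $k$ via Lemma~\ref{lema:r} would not suffice on its own, since that lemma gives only a containment rather than the exact description.
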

Since the proof of this lemma is technical and very similar to those arguments given in previous proofs,
it was included at the end of this section.

We now give a similar result to Theorem~\ref{teo:deq} for the replication of vertices.
\begin{Theorem}\label{teo:req}
Let $G$ be a signed multidigraph, $v\in V(G)$, $\gamma=\gamma_{\mathcal P}(G)$, $\gamma_r=\gamma_{\mathcal P}(r(G,v))$, $\gamma_v=\gamma_{\mathcal P}(G- v)$, 
and $\lambda=\lambda(\gamma_v,\gamma_r)$.
If $\gamma\geq 2$, then $0\leq \gamma_r-\gamma_v\leq 2$ and
\[
I_{\gamma_r+k}(r^{k+\lambda+i}(G,v),X)= \left\langle \left\{ \widetilde{P}_{l}^{k+\lambda+i}(v) \cdot I_{\gamma_r+k-l}(G, X)|_{x_v=-1} \right\}_{l=0}^{k}\right\rangle, 
\]
for all $k\geq 1$ and $i\geq 0$.
\end{Theorem}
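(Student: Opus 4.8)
The plan is to mirror the proof of Theorem~\ref{teo:deq} line by line, replacing the duplication data by its replication counterpart: $P_l^k(v)$ by $\widetilde P_l^k(v)$, the evaluation $I_j(G,X)|_{x_v=0}$ by $I_j(G,X)|_{x_v=-1}$, Lemma~\ref{lema:d} by Lemma~\ref{lema:r}, and Lemma~\ref{lema:gend} by Lemma~\ref{lema:genr}. The only genuinely new algebraic ingredient is the identity ${\rm det}(J(-1,{\bf a'};M,{\bf b'}))=-{\rm det}(M)+{\rm det}(J(0,{\bf a'};M,{\bf b'}))$, which is already built into Lemma~\ref{lema:genr}, so the remaining effort is bookkeeping.

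First I would establish $0\leq \gamma_r-\gamma_v\leq 2$. The lower bound is immediate, since $G-v$ is an induced subgraph of $r(G,v)$ and hence $\gamma_r\geq\gamma_v$ by \cite[Proposition 3.3]{critical}. For the upper bound I would record the inclusion
\[
I_j(G,X)|_{x_v=-1}\subseteq I_{j-2}(G-v,X),
\]
which follows from the generator description of Proposition~\ref{eq:g}: each generator of $I_j(G,X)|_{x_v=-1}$ is either a $j\times j$ minor of $L(G-v,X)$, a size-$j$ minor of $[{\bf a}^T\,L(G-v,X)]$ or $[L(G-v,X)\,{\bf b}]$, or a member of $R_j(G,v)$, and Laplace expansion along the ${\bf a}$, ${\bf b}$ and the $0$ entries writes each as a combination of minors of $L(G-v,X)$ of size at least $j-2$, hence lies in $I_{j-2}(G-v,X)$. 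Since $I_{\gamma_r}(r(G,v),X)$ is trivial, Lemma~\ref{lema:r} makes $I_{\gamma_r}(G,X)|_{x_v=-1}$ trivial, and the displayed inclusion then forces $I_{\gamma_r-2}(G-v,X)$ trivial, i.e. $\gamma_v\geq\gamma_r-2$.

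Next I would apply Lemma~\ref{lema:genr} with replication count $k+\lambda+i$ and target index $j=\gamma_r+k$, and set $m=\min(k+\lambda+i,\gamma_r+k-1)$. Because $\gamma_r\geq\gamma_{\mathcal P}(G)\geq 2$, we have $\gamma_r-1\geq 1\geq\lambda$, so $m=k+\min(\lambda+i,\gamma_r-1)\geq k+\lambda$. The decisive fact, again from Lemma~\ref{lema:r} and the definition of $\gamma_r$, is $I_{\gamma_r}(G,X)|_{x_v=-1}=\langle 1_{\mathcal P}\rangle$. This triviality collapses the $l=k$ term $\widetilde P_k^{k+\lambda+i}(v)\cdot I_{\gamma_r}(G,X)|_{x_v=-1}$ to $\langle\widetilde P_k^{k+\lambda+i}(v)\rangle$, and it absorbs every leftover generator produced by Lemma~\ref{lema:genr}: the $G-v$, $({\bf a},\cdot)$ and $(\cdot,{\bf b})$ minors of index $\gamma_r+k-m\leq\gamma_r$, together with the tail $\widetilde S_{\gamma_r+k}^{k+\lambda+i}$, all lie in $\langle\widetilde P_k^{k+\lambda+i}(v)\rangle$ because every monomial occurring there is a product of at least $k$ of the factors $(x_{v^c}+1)$.

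Finally I would split on $\lambda$ exactly as in Theorem~\ref{teo:deq}. When $\lambda=0$ we have $m=k$ and $\gamma_v=\gamma_r$, so $I_{\gamma_r}(G-v,X)$ is trivial and $\widetilde P_k^{k+i}(v)\cdot I_{\gamma_r}(G-v,X)$ already equals $\langle\widetilde P_k^{k+i}(v)\rangle$; when $\lambda=1$ the index $l=k$ supplies $\langle\widetilde P_k^{k+i+1}(v)\rangle$ through the triviality of $I_{\gamma_r}(G,X)|_{x_v=-1}$. In either case the ideal reduces to $\langle\{\widetilde P_l^{k+\lambda+i}(v)\cdot I_{\gamma_r+k-l}(G,X)|_{x_v=-1}\}_{l=0}^{k}\rangle$, the asserted form. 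I expect the main obstacle to be precisely the redundancy bookkeeping of the third paragraph: one must use the inclusion $I_j(G,X)|_{x_v=-1}\subseteq I_{j-2}(G-v,X)$ and the triviality of $I_{\gamma_r}(G,X)|_{x_v=-1}$ \emph{in tandem} to kill the auxiliary generators of Lemma~\ref{lema:genr}, and one must check that these reductions hold uniformly for every $i\geq 0$, not merely for small $i$.
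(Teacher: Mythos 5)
Your proposal is correct and follows essentially the same route as the paper: establish $0\leq\gamma_r-\gamma_v\leq 2$ via the inclusion $I_j(G,X)|_{x_v=-1}\subseteq I_{j-2}(G-v,X)$ together with Lemma~\ref{lema:r}, apply Lemma~\ref{lema:genr} with $j=\gamma_r+k$, use the triviality $I_{\gamma_r}(G,X)|_{x_v=-1}=\langle 1_{\mathcal P}\rangle$ to absorb the auxiliary generators, and split on $\lambda$ exactly as in Theorem~\ref{teo:deq}. The only difference is that you spell out the bookkeeping (the Laplace-expansion argument for the inclusion and the absorption of the $\widetilde{S}$-terms and $\widetilde{P}_m$-multiples into $\langle\widetilde{P}_k^{k+\lambda+i}(v)\rangle$) that the paper leaves implicit by deferring to the proof of Theorem~\ref{teo:deq}.
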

The proof follows similar by arguments to those used in Theorem~\ref{teo:deq}.
\begin{proof}
First, since $I_j(G,X)|_{x_v=-1}\subseteq I_{j-2}(G-v, X)$, Lemma~\ref{lema:r} implies that $0\leq \gamma_r-\gamma_v\leq 2$.
Now, applying Lemma~\ref{lema:genr} with $j=\gamma_r+k$ and $k=k+\lambda+i$ we have that $I_{\gamma_r+k}(r^{k+\lambda+i}(G,v),X)$ is equal to
\begin{gather*}
\left\langle \left\{ \widetilde{P}_{l}^{k+\lambda+i}(v) \cdot I_{\gamma_r+k-l}(G, X)|_{x_v=-1} \right\}_{l=0}^{m-1}, 
\widetilde{P}_{m}^{k+\lambda+i}(v) \cdot I_{\gamma_r+k-m}(G-v, X),\right.\\
\left.\widetilde{P}_{m}^{k+\lambda+i}(v)\cdot {\rm minors}_{\gamma_r+k-m}({\bf a},L(G-v, X)),
\widetilde{P}_{m}^{k+\lambda+i}(v)\cdot {\rm minors}_{\gamma_r-k-m}(L(G-v, X), {\bf b}), \widetilde{S}_{\gamma_r+k}^{k+\lambda+i}(G,v)\right\rangle,
\end{gather*}
where $m={\rm min}(k+\lambda+i, \gamma_r+k-1)=k+{\rm min}(\lambda+i, \gamma_r-1)\geq k+\lambda$.

By Lemma~\ref{lema:r}, $I_{\gamma_r}(G,X)|_{x_v=-1}=\langle 1\rangle$.
The rest follows in a similar way to the proof of Theorem~\ref{teo:deq}.
\end{proof}
We now show an example in order to understand Theorem~\ref{teo:req}.

\begin{Example}\label{example:req}
Let $G$ be the signed multidigraph given in Figure~\ref{figure:Replication}.
\begin{figure}[h]
\begin{center}
\begin{tabular}{c@{\extracolsep{2cm}}c}
\multirow{9}{3cm}{
\begin{tikzpicture}[line width=1pt, scale=1]
\tikzstyle{every node}=[inner sep=0pt, minimum width=4pt] 
\draw (0,0) node (v1) [draw, circle, fill=gray, label = right:{\small $v_1$}] {};
\draw (1,1) node (v2) [draw, circle, fill=gray, label = right:{\small $v_2$}] {};
\draw (1,-1) node (v3) [draw, circle, fill=gray, label = right:{\small $v_3$}] {};
\draw (-2,1) node (v4) [draw, circle, fill=gray, label = left:{\small $v_4$}] {};
\draw (-1,0) node (v5) [draw, circle, fill=gray, label = left:{\small $v_5$}] {};
\draw (-2,-1) node (v6) [draw, circle, fill=gray, label = left:{\small $v_6$}] {};
\draw[red] (1.2,0) node () {\small $-$};
\path[<-] (v1) edge (v2) edge (v3) edge[bend right] (v4) edge (v5) edge[bend left] (v6);
\draw (v4) -- (v5) -- (v6) -- (v4);
\draw[red] (v2) -- (v3);
\end{tikzpicture}
}
&\\
&
$
L(G, X)=
\left[\begin{array}{cccccc}
x_1 &     0 &      0 &      0 &     0 &     0 \\
 -1   & x_2 &      1 &      0 &     0 &     0 \\
 -1   &     1 &  x_3 &      0 &     0 &     0 \\
 -1   &     0 &      0 &  x_4 &    -1 &    -1 \\
 -1   &     0 &      0 &     -1 & x_5 &    -1 \\
 -1   &     0 &      0 &     -1 &     -1 & x_6 
\end{array}\right]
$\\
& \\
\end{tabular}
\end{center}
\caption{A graph $G$ with six vertices and its generalized Laplacian matrix.}
\label{figure:Replication}
\end{figure}

By using a computer algebra system, we have that $\gamma_{\mathbb{Z}}(G)=\gamma_{\mathbb{Z}}(G- v_1)=2$ and $\gamma_{\mathbb{Z}}(r(G,v_1))=3$.
Thus $\gamma_r-\gamma=1$ and $\lambda(\gamma-\gamma_v,\gamma_r-\gamma)=1$.
Also, it can be calculated that $I_{4}(G,X)|_{x_1=-1} = \langle  x_4+1, x_5+1, x_6+1, x_2x_3-1 \rangle$, 
\[
I_{5}(G,X)|_{x_1=-1} = \langle  (x_4+1)\cdot (x_2x_3-1), (x_5+1)\cdot (x_2x_3-1), (x_6+1)\cdot (x_2x_3-1), x_4x_5x_6-x_4-x_5-x_6-2 \rangle,
\]
and $I_{6}(G,X)|_{x_1=-1} = \langle  (x_2x_3-1)\cdot (x_4x_5x_6-x_4-x_5-x_6-2) \rangle$.
Then Theorem~\ref{teo:req} implies
\begin{eqnarray*}
I_{4}(r^{i+2}(G,v_1),X) &=& \langle  \{ x_{1^l}+1\}_{0\leq l\leq i+2}, I_{4}(G,X)|_{x_1=-1} \rangle\\
&=&\langle  \{ x_{1^l}+1\}_{0\leq l\leq i+2}, x_4+1, x_5+1, x_6+1, x_2x_3-1 \rangle,
\end{eqnarray*}
for all $i\geq 0$.
Also $I_{5}(r^{i+3}(G,v_1),X)$ is equal to
\[
\left\langle  \left\{ (x_{1^l}+1)(x_{1^{l'}}+1)\right\}_{0\leq l<l'\leq i+3}, I_{5}(G,X)|_{x_1=-1}, \left\{ (x_{1^l}+1)\cdot I_{4}(G,X)|_{x_1=-1}\right\}_{0\leq l\leq i+3}  \right\rangle
\]
for all $i\geq 0$.
Finally, $I_{k+3}(r^{k+i+1}(G,v_1),X)$ is equal to
{
\[
\left\langle \widetilde{P}_{k}^{k+i+1}(v_1), \widetilde{P}_{k-1}^{k+i+1}(v_1) \cdot I_{4}(G, X)|_{x_v=-1}, \widetilde{P}_{k-2}^{k+i+1}(v_1) \cdot I_{5}(G, X)|_{x_v=-1},  \widetilde{P}_{k-3}^{k+i+1}(v_1) \cdot I_{6}(G, X)|_{x_v=-1} \right\rangle,
\]
}
for all $k\geq 3$ and $i\geq 0$.
On the other hand, it can be check that $I_{4}(r(G,v_1),X)$ is equal to 
\[
\langle  \{ (x_{1^l}+1)(x_{l'}-1)\}_{0\leq l\leq 1, 2\leq l' \leq 3}, x_4+1, x_5+1, x_6+1, x_2x_3-1, x_{1} x_{1^1} -1 \rangle,
\]
which is different from $\langle  x_{1}+1, x_{1^1}+1 , x_4+1, x_5+1, x_6+1, x_2x_3-1 \rangle$.
Thus Theorem~\ref{teo:req} can not be improved.
\end{Example}

\begin{Remark}
Note that $I_i(K_{k+1},X)=\langle \widetilde{P}_{i-1}^{k}(v)\rangle$ for all $1\leq i\leq k$, see~\cite[Theorem 3.16]{critical}.
Moreover, if $m={\rm min}(k,j-1)$, then 
\[
I_j(r^k(G,v),X)|_{x_{v}=-1}=\left\langle \left\{ \widetilde{P}_{l}^{[k]}(v) \cdot I_{j-l}(G, X)|_{x_v=-1} \right\}_{l=0}^{m} \right\rangle. 
\]
That is, $I_j(d^k(G,v),X)|_{x_{v}=-1}$ behaves almost equal to as $I_j(K_{k+1}\sqcup G, X)$.
\end{Remark}

In a similar way to Theorem~\ref{teo:deq}, when we equal all the variables associated to the replicated vertices of $v$,
we get that Theorem~\ref{teo:req} takes the following form:
If $\gamma_{\mathcal P}(G) \geq 2$ and $x_v$ is the variable associated to all the twins of $v$, then
\[
I_{\gamma_r+k}(d^{k+\lambda+i}(G,v),X)= \left\langle  \left\{ (x_v^l+1) \cdot I_{\gamma_r+k-l}(G, X)|_{x_v=0} \right\}_{l=0}^{k}\right\rangle 
\]
for all $k\geq 1$ and $i\geq 0$. 

Examples~\ref{example:deq} and~\ref{example:req} show that the results obtained in this article are tight.
By using Theorems~\ref{teo:deq} and~\ref{teo:req} we can not determine all the critical ideals of the graph $G^{\bf d}$ for some 
${\bf d}\in \mathbb{Z}^{V(G)}$ in terms of the critical ideals of $G$.
However, there exist some special cases in which we can determine their critical ideals using very similar ideas.
For instance, in the following subsection we present the case of a complete bipartite graph.

\subsection{Critical ideals of the complete bipartite graph}\label{Sbipartite}
Given $m\geq n\geq 1$, let $K_{n,m}$ be the complete bipartite graph with bipartition $(U,V)$ such that $U$ contains $n$ vertices and $V$ contains $m$ vertices.
If $K_2$ is the complete graph with two vertices $v_1$ and $v_2$, then it is clear that $K_{n,m}=K_2^{(n-1,m-1)}$.
Now, given $0\leq j\leq n-1$, let
\[
\sigma_{j,n}(v)=
\begin{cases}
\sum_{r=1}^n\prod_{s\neq r} x_{v^s} & \text{ if } j=n-1,\\
P_{j}^{n-1}(v) & \text{ otherwise}.
\end{cases}
\]
\begin{Theorem}\label{Tbipartite} 
If $m\geq n\geq 2$, then
{\small
\[
I_j(K_{n, m}, X)=
\begin{cases}
\langle \{\sigma_{r,n}(v_1)\cdot \sigma_{s,m}(v_2): r+s=j-2, (0,0)\leq (r,s)\leq (n-1,m-1)\}\rangle & \text{ if } 2\leq j\leq n+m-2,\\
\langle \sigma_{n-1,n}(v_1)\cdot \sigma_{m-2,m}(v_2), \sigma_{n-2,n}(v_1)\cdot \sigma_{m-1,m}(v_2), P_{n-1}^{n-1}(v_1)\cdot P_{m-1}^{m-1}(v_2)\}\rangle \!\!\!\!& \text{ if } j=n+m-1,\\
\langle \prod_{r=1}^n x_{1^r} \cdot \prod_{s=1}^m x_{2^s}-\sigma_{n-1,n}(v_1)\cdot \sigma_{m-1,m}(v_2)\rangle & \text{ if } j=n+m.
\end{cases}
\]
}
\end{Theorem}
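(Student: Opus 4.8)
The plan is to compute $I_j(K_{n,m},X)$ directly from the join structure of the Laplacian, using Lemma~\ref{lema:det1} as the main engine, exactly as suggested by the remark preceding this subsection. The structural starting point is that the two parts of $K_{n,m}$ are independent sets, so $K_{n,m}=T_n\boxtimes T_m$ and hence $L(K_{n,m},X)=J(D_a,-{\bf 1};D_b,-{\bf 1})$, where $D_a={\rm diag}(x_{v_1^0},\dots,x_{v_1^{n-1}})$ and $D_b={\rm diag}(x_{v_2^0},\dots,x_{v_2^{m-1}})$ are diagonal (the blocks $L(T_n,X)$ and $L(T_m,X)$ carry no off-diagonal entries). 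The key fact, noted right after Lemma~\ref{lema:det1}, is that every square submatrix of a join is again a join: a $j\times j$ submatrix is indexed by row sets $R=R_U\sqcup R_V$ and column sets $C=C_U\sqcup C_V$ and equals $J(D_a[R_U,C_U],-{\bf 1};D_b[R_V,C_V],-{\bf 1})$.

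I would then apply Lemma~\ref{lema:det1} with $p_1=|R_U|$ and $p_2=|C_U|$. The minor vanishes unless $|p_1-p_2|\le 1$, and in each of the three surviving regimes it factors into products of (possibly bordered) determinants of the diagonal submatrices $D_a[R_U,C_U]$ and $D_b[R_V,C_V]$. The heart of the computation is the elementary observation that such a bordered diagonal determinant vanishes unless its row and column index sets coincide up to a single element, and otherwise evaluates to $\pm$ a single squarefree monomial, or, when the index sets agree completely, to $\pm$ the elementary symmetric polynomial $e_{p-1}$ of the corresponding diagonal variables. Carrying this out produces an explicit generating set for $I_j(K_{n,m},X)$: the regime $p_1=p_2$ with $R_U=C_U$ and $R_V=C_V$ contributes the degree-$j$ monomials $\prod_{i\in R_U}x_{v_1^i}\prod_{l\in R_V}x_{v_2^l}$ together with a degree-$(j-2)$ correction of the shape $\pm e_{p-1}(x_{v_1})\,e_{q-1}(x_{v_2})$; the other $p_1=p_2$ subcases and the two off-diagonal regimes $p_1=p_2\pm1$ contribute single monomials of degrees $j-2$ and $j-1$.

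The remaining task is to match this generating set with $\langle\sigma_{r,n}(v_1)\sigma_{s,m}(v_2):r+s=j-2\rangle$. One inclusion is direct: every degree-$(j-2)$ generator $\sigma_{r,n}(v_1)\sigma_{s,m}(v_2)$ is realized as a component of one of the minors above; the genuinely monomial generators with $r\le n-2$, $s\le m-2$ come from the diagonal regime with $R_U\ne C_U$ and $R_V\ne C_V$ each differing by one element, while the boundary generators involving $e_{n-1}$ or $e_{m-1}$ come from taking $R_U=C_U=\{0,\dots,n-1\}$ (all of one part) against an $R_V\ne C_V$. For the converse I would show the higher-degree minors are redundant: a degree-$j$ monomial with $|R_U|=p\le n$, $|R_V|=q\le m$ and $p+q=j$ is divisible by some degree-$(j-2)$ monomial generator $\sigma_{r,n}(v_1)\sigma_{s,m}(v_2)$ as soon as one can choose $r$ with $\max(p-2,0,j-m)\le r\le\min(p,n-2)$, and this interval is non-empty precisely because $j\le n+m-2$ (which gives $j-m\le n-2$). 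The degree-$(j-1)$ monomials are disposed of by the same single-factor-deletion argument. The cases $j=n+m-1$ and $j=n+m$ are then forced by where this interval argument breaks down, the latter being read off directly from Lemma~\ref{lema:det1} as $\det L=\prod_{i}x_{v_1^i}\prod_{l}x_{v_2^l}-e_{n-1}(x_{v_1})e_{m-1}(x_{v_2})$.

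The main obstacle I anticipate is exactly the boundary bookkeeping that splits the statement into three cases. Since $\sigma_{n-1,n}(v_1)$ is defined as the symmetric polynomial $e_{n-1}$ rather than as a set of monomials, a full-part monomial such as $\prod_{i}x_{v_1^i}$ is \emph{not} divisible by it, so the divisibility argument that eliminates higher-degree minors fails precisely when $R_U$ or $R_V$ exhausts one part. At $j=n+m-2$ this cannot happen for both parts simultaneously; at $j=n+m-1$ it forces the extra generator $P_{n-1}^{n-1}(v_1)P_{m-1}^{m-1}(v_2)$ — the products of $n-1$ and $m-1$ variables, which do divide the full-part monomials — and at $j=n+m$ only the single determinant survives. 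Tracking the signs and checking that the non-monomial $e_{p-1}$ contributions land in the intended generator is the only delicate part; the rest is the routine minor classification made available by Lemma~\ref{lema:det1}.
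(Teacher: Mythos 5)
Your proposal is correct, but it is not the paper's argument --- in fact the paper never writes down a full proof of Theorem~\ref{Tbipartite}. What the paper does, immediately after the statement, is exploit $K_{n,m}=K_2^{(n-1,m-1)}$ and its duplication machinery: it applies Theorem~\ref{teo:deq} to $K_{n,2}=d^{n-2}(K_{2,2},v_{1^1})$ (after checking $\lambda=0$) to obtain $I_j(K_{n,2},X)$ for $3\le j\le n$, asserts that iterating on the other part yields the first $m$ critical ideals of $K_{n,m}$ (``more than one half''), and defers the remaining ideals --- in particular the boundary cases $j=n+m-1$, where the extra generator $P_{n-1}^{n-1}(v_1)\cdot P_{m-1}^{m-1}(v_2)$ appears, and $j=n+m$ --- to ``similar, but more specific techniques'' that are never given. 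Your route is instead a direct classification of all $j\times j$ minors of $L(K_{n,m},X)=J(D_a,-\mathbf{1};D_b,-\mathbf{1})$ via Lemma~\ref{lema:det1}, using that the blocks are diagonal, so each (possibly bordered) block determinant is $0$, a squarefree monomial, or an elementary symmetric polynomial $e_{p-1}$; your interval criterion $\max(p-2,0,j-m)\le r\le\min(p,n-2)$ is exactly what absorbs the degree-$(j-1)$ and degree-$j$ minors into the degree-$(j-2)$ generators when $j\le n+m-2$, and its failure when one part is exhausted is what forces the other two cases. So your approach is more elementary and self-contained, and it is the only one of the two that actually establishes all three cases uniformly; the paper's route buys reuse of (and an illustration of) its general twin-vertex theorems, but as presented it proves only part of the first case. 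One imprecision to fix in a write-up: in the regime $p_1=p_2$, the mixed subcases ($R_U=C_U$, $R_V\ne C_V$, and symmetrically) contribute $e_{p-1}$ on $R_U$ times a monomial --- a sum of monomials, not a ``single monomial'' as you say --- and you should state explicitly that for $p<n$ these sums already lie in the ideal generated by $P_{p-1}^{n-1}(v_1)\cdot P_{q-1}^{m-1}(v_2)$, while for $p=n$ they are precisely the generators involving $\sigma_{n-1,n}(v_1)$; with that spelled out, your outline is complete.
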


The results obtained here can be used to determine a big part of the critical ideals of the complete bipartite graph.
Since $K_{n, 2}=d^{n-2}(K_{2, 2},v_{1^1})$ and
\[
I_j(K_{2, 2}, X)=
\begin{cases}
\langle 1\rangle& \text{ if }j=1,2,\\
\langle x_1+x_{1^1},x_2+x_{2^1}, x_1x_2\rangle& \text{ if }j=3,\\
\langle x_1x_{1^1}x_2x_{2^1}-x_1x_2-x_1x_{2^1}-x_{1^1}x_2-x_{1^1}x_{2^1}\rangle & \text{ if }j=4,
\end{cases}
\]
$\gamma_v=\gamma_{\mathbb{Z}}(K_{1,2})=2=\gamma_{\mathbb{Z}}(K_{3,2})=\gamma_d$. 
Thus $\lambda=0$, $I_3(K_{2, 2}, X)|_{x_{1^1}=0}=\langle x_1,x_{2}+x_{2^1}\rangle$ and $I_4(K_{2, 2}, X)|_{x_{1^1}=0}=\langle x_1\cdot(x_{2}+x_{2^1}) \rangle$.
Applying~Theorem~\ref{teo:deq} with $k=j-2$, $i=n-j\geq 0$ we get that
\begin{eqnarray*}
I_j(K_{n, 2}, X)&=&I_{2+(j-2)}(d^{n-2}(K_{2, 2},v_{1^1}), X)\\
&=&\langle P_{j-2}^{n-2}(v_{1^1}), P_{j-3}^{n-2}(v_{1^1})\cdot I_3(K_{2, 2}, X)|_{x_{1^1}=0}, P_{j-4}^{n-2}(v_{1^1})\cdot I_4(K_{2, 2}, X)|_{x_{1^1}=0}\rangle\\
&=&\langle \{\sigma_{j-2,n}(v_1)\cdot \sigma_{0,2}(v_2), \sigma_{j-3,n}(v_1)\cdot \sigma_{1,2}(v_2)\}\rangle 
\end{eqnarray*}
for all $3\leq j\leq n+m-2$.
In a similar way we can use the critical ideals of $K_{n,2}$ to determine the first $m$ critical ideals of $K_{n,m}$.
That is, we can determine more than one half of the critical ideals of $K_{n,m}$.
The remaining critical ideals can be determined using similar, but more specific techniques.
In a more general setting~Theorem~\ref{teo:deq} can be used to determine a part of the critical ideals of the complete multipartite graphs.

Moreover, Theorems~\ref{teo:deq} and~\ref{teo:req} can be improved in the special case when several vertices are duplicated and replicated simultaneously, 
which allows us to describe almost completely the critical ideals of complete multipartite graphs and threshold graphs.


\subsection{Proofs of Lemmas~\ref{lema:gend} and~\ref{lema:genr}}

\mbox{}

\textbf{Proof of Lemma \ref{lema:gend}:}
The generalized Laplacian matrix $L(d^k(G,v),X)$ of $d^k(G,v)$ is equal to 
\[
J({\rm diag}(x_{v^0},..., x_{v^k}),{\bf a};L(G-v,X),{\bf b}),
\]
for some ${\bf a},{\bf b} \in \mathcal{P}^{n-1}$.
Let $\mathcal{I,I'}\subseteq [n+k]$ be two sets of size $j$, $h=|\mathcal{I}\cap [k+1]|$, $h'=|\mathcal{I'}\cap [k+1]|$, and 
\[
m_{\mathcal{I,I'}}={\rm det}(L(d^k(G,v),X)[\mathcal{I,I'}]).
\]
Clearly $0\leq h,h'\leq m+1$.
If $h,h'=0$, then $m_{\mathcal{I,I'}}\in {\rm minors}_{j}(L(G-v,X))$ and $m_{\mathcal{I,I'}}\in I_{j}(G-v,X)$. 
Now assume that $h=0$.
If $h'\geq 2$, then two columns of $L(d^k(G,v),X)[\mathcal{I,I'}]$ are equal, and $m_{\mathcal{I,I'}}=0$.
Also, if $h'=1$, then $m_{\mathcal{I,I'}}\in {\rm minors}_j({\bf a}, L(G-v,X))$. 
We can use similar arguments when $h'=0$.
Thus, we assume that $h,h'\geq 1$.

Now by Lemma~\ref{lema:det1} we have that 
\[
m_\mathcal{I,I'}=
\begin{cases}
0 & \text{ if } |h-h'|\geq 2,\\
{\rm det}\left[\begin{array}{cc} P&1\end{array}\right]\cdot {\rm det}\left[\begin{array}{c}{\bf b}'\\Q\end{array}\right]& \text{ if } h-h'=1,\\
{\rm det}\left[\begin{array}{c}P\\{\bf 1}\end{array}\right]\cdot{\rm det}\left[\begin{array}{cc}{\bf a}'^T &Q\end{array}\right] & \text{ if } h'-h=1,\\
{\rm det}(P)\cdot {\rm det}(Q)-{\rm det}(J(P,{\bf 1};0,{\bf 1})) \cdot {\rm det}(J(0,{\bf a'};Q,{\bf b'})) & \text{ if } h=h',
\end{cases}
\]
for some submatrix $P$ of ${\rm diag}(x_{v^0},..., x_{v^k})$, some submatrix $Q$ of $L(G- v, X)$, and some subvectors ${\bf a}'$ of ${\bf a}$ and ${\bf b}'$ of ${\bf b}$.
Clearly, ${\rm det}\left[\begin{array}{cc} P& {\bf 1}\end{array}\right]\neq 0$ if and only if (up to row and column permutations)
\[
P=\left[\begin{array}{c} {\rm diag}(x_{v^{i_1}}, \ldots, x_{v^{i_{h'}}})\\ {\bf0}\end{array}\right].
\]
If $h-h'=1$, then $m_\mathcal{I,I'}\in P_{h'}^{k}(v)\cdot {\rm minors}_{j-h'}(L(G- v, X),{\bf b})\subsetneq P_{h'}^{k}(v)\cdot I_{j-h'}(G,X)|_{x_v=0}$, for all $1\leq h'\leq m$.
Similarly, if $h'-h=1$, then $m_\mathcal{I,I'}\in P_{h}^{k}(v)\cdot {\rm minors}_{j-h}({\bf a}, L(G- v, X))\subsetneq P_{h}^{k}(v)\cdot I_{j-h}(G,X)|_{x_v=0}$, for all $1\leq h\leq m$.
On the other hand, if $h=h'$ we have the following cases:

\noindent Case I: If $P$ has at least two zero rows, then ${\rm det}(P)=0$, ${\rm det}(J(P,{\bf 1};0,{\bf 1})) =0$, and $m_\mathcal{I,I'}=0$.

\noindent Case II:  If $P$ has only one zero row, then ${\rm det}(P)=0$, 
${\rm det}(J(P,{\bf 1};0,{\bf 1})) =\prod_{t=1}^{h-1} x_{v^{i_t}}$, and
\[
m_\mathcal{I,I'}=\prod_{t=1}^{h-1} x_{v^{i_t}}\cdot {\rm det}(J(0,{\bf a}';Q,{\bf b}')),
\] 
for some $(j-h+1)\times (j-h+1)$-submatrix $J(0,{\bf a}';Q,{\bf b}')$ of $L(G, X)|_{x_v=0}$.
Thus $m_\mathcal{I,I'}\in P_{h-1}^{k}(v)\cdot {\rm minors}_{j-h+1}({\bf a}, L(G- v, X),{\bf b})\subsetneq P_{h-1}^{k}(v)\cdot I_{j-h+1}(G,X)|_{x_v=0}$, for all $2\leq h \leq m-1$.

\noindent Case III: If $P$ has no zero row, then 
\[
m_\mathcal{I,I'}=
\begin{cases} 
\prod_{t=1}^h x_{v^{i_t}}\cdot {\rm det}(Q)+\sum_{t=1}^h \prod_{s\neq t} x_{v^{i_s}}\cdot {\rm det}(J(0,{\bf a'};Q,{\bf b'}))
& \text{ if } h< j,\\
\prod_{t=1}^h x_{v^{i_t}} & \text{ if } h= j,
\end{cases}
\]
for some $(j-h+1)\times(j-h+1)$-submatrix $J(0,{\bf a'};Q,{\bf b'})$ of $L(G, X)|_{x_v=0}$, and for all $1\leq h \leq m$.
Moreover, since 
\[
\sum_{t=1}^h \prod_{s\neq t} x_{v^{i_s}}\cdot {\rm det}(J(0,{\bf a'};Q,{\bf b'}))
\in \langle P_{h-1}^{k}(v)\cdot {\rm minors}_{j-h+1}({\bf a}, L(G-v,X),{\bf b})\rangle
\]
and $\prod_{t=1}^h x_{v^{i_t}}\cdot {\rm det}(Q)=m_\mathcal{I,I'}-\sum_{t=1}^h \prod_{s\neq t} x_{v^{i_s}}\cdot {\rm det}(J(0,{\bf a'};Q,{\bf b'}))
\in \langle P_{h}^{k}(v)\cdot{\rm minors}_{j-h}(L(G- v, X)) \rangle\subsetneq P_{h}^{k}(v)\cdot I_{j-h}(G,X)|_{x_v=0}$
for all $0\leq h\leq m-1$, we get the result.

\bigskip

\textbf{Proof of Lemma \ref{lema:genr}:}
Let $\mathcal{I,I'}\subseteq [n+k]$ be two sets of size $j$, $h=|\mathcal{I}\cap [k+1]|$ and $h'=|\mathcal{I'}\cap [k+1]|$.
Clearly $0\leq h,h'\leq m+1$ and $L(r^k(G,v),X)=J(L(K_{k+1},X), {\bf a};L(G- v, X), {\bf b})$ for some $\textbf{a},\textbf{b}\in\mathcal{P}^{n-1}$.
Let $m_\mathcal{I,I'}=\det(L(r^k(G,v),X)[\mathcal{I,I'}])$.

We can use the same arguments used in the proof of Lemma~\ref{lema:gend} for the case when $h=0$ or $h'=0$.
On the other hand, by Lemma~\ref{lema:det1}
\[
m_\mathcal{I,I'}=\left\{\begin{array}{ll}
0&\textrm{if }|h-h'|>2,\\
\det\left[\begin{array}{cc} P& {\bf 1}^T\end{array}\right]\det\left[\begin{array}{c} \textbf{b}'\\ Q\end{array}\right]&\textrm{if }h-h'=1,\\
\det\left[\begin{array}{c} P\\ {\bf 1}\end{array}\right]\det\left[\begin{array}{cc} \textbf{a}'^T& Q\end{array}\right]&\textrm{if }h'-h=1,\\
\det(P)\det(Q)-\det\left[\begin{array}{cc} P& {\bf 1}^T\\{\bf 1}&0\end{array}\right]\det\left[\begin{array}{cc} 0& \textbf{b}'\\ \textbf{a}'^T&Q\end{array}\right]&\textrm{if }h=h',
\end{array}\right.
\]
where $P$ is a submatrix of $L(K_{k+1},X)$, $Q$ is a submatrix of $L(G- v, X)$, $\textbf{a}'$ is a subvector of $\textbf{a}$ and $\textbf{b}'$ is a subvector of $\textbf{b}$. 
If $h-h'=1$ then $\det\left[\begin{array}{cc} P& 1\end{array}\right]\neq 0$ if and only if (up to row and column permutations)  
\[
P=\left(\begin{array}{cccc} x_{v^{i_1}}& & -1&-1\\ &\ddots& \\ -1& &x_{v^{i_{h'}}}& -1\end{array}\right)^T
\]
for some $0\leq l_1<\cdots< l_{h'}\leq k$.  
Since $\det\left[\begin{array}{cc} P& {\bf 1}^T\end{array}\right]=\prod_{s=1}^{h'} (x_{v^{i_s}}+1)$, 
$m_\mathcal{I,I'}\in \widetilde{P}_{h'}^{k}(v) \cdot {\rm minors}_{j-h'}(L(G-v, X),{\bf b})\subsetneq \widetilde{P}_{h'}^{k}(v) \cdot I_{j-h'}(G,X)|_{x_v=-1}$.
In a similar way, if $h'-h=1$, then  $m_\mathcal{I,I'}\in\widetilde{P}_{h}^{k}(v)\cdot I_{j-h}(G,X)|_{x_v=-1}$.

Now assume that $h=h'$.
If $P$ has two rows equal to $-{\bf 1}$, then $m_\mathcal{I,I'}=0$. 
Let
\[
R=\left(\begin{array}{ccc} x_{v^{l_1}}& & -1\\ &\ddots& \\ -1& &x_{v^{l_h}}\end{array}\right) 
\]
where $0\leq l_1<\cdots<l_h\leq k$. 
If $P$ has only a row equal to $-{\bf 1}$, then $P$ is equal to (up to row and column permutations) $R|_{x_{v^{l_h}}=-1}$.
Since ${\rm det}(R|_{x_{v^{l_h}}=-1})=-\prod_{s=1}^{h-1}(x_{v^{l_s}}+1)$ and $\det(J(R|_{x_{v^{l_h}}=-1},{\bf 1};0,{\bf 1}))=-\prod_{s=1}^{h-1}(x_{v^{l_s}}+1)$,
\[
m_\mathcal{I,I'}=\left(\det(J(0,{\bf a'};Q,{\bf b'}))-\det(Q)\right)\prod_{s=1}^{h-1}(x_{v^{l_s}}+1) = \det(J(-1,{\bf a'};Q,{\bf b'}))\prod_{s=1}^{h-1}(x_{v^{l_s}}+1),
\text{ for all } 1\leq h \leq m.
\]
Thus $m_\mathcal{I,I'}\in\langle \widetilde{P}_{h-1}^{k}(v)\cdot I_{j-h+1}(G,X)|_{x_v=-1}\rangle$.
Finally, if $P$ has no row equal to $-{\bf 1}$, then $P$ is equal to (up to row and column permutations) to $R$.
Since ${\rm det}(R)=\prod_{s=1}^{h}(x_{v^{l_s}}+1)-\sum_{s=1}^{h}\prod_{t\neq s}(x_{v^{l_t}}+1)$ (see~\cite[Theorem 3.15]{critical}) 
and $\det(J(R,{\bf 1};0,{\bf 1})) = -\sum_{s=1}^{h}\prod_{t\neq s}(x_{v^{l_t}}+1)$,
\begin{eqnarray*}
m_\mathcal{I,I'}&=& \det(Q)\cdot \prod_{s=1}^{h}(x_{v^{l_s}}+1)+\left(\det(J(0,{\bf a'};Q,{\bf b'}))-\det(Q)\right)\cdot\sum_{s=1}^{h}\prod_{t\neq s}(x_{v^{l_t}}+1)\\
&=& \det(Q)\cdot\prod_{s=1}^{h}(x_{v^{l_s}}+1)+\det(J(-1,{\bf a'};Q,{\bf b'}))\cdot\sum_{s=1}^{h}\prod_{t\neq s}(x_{v^{l_t}}+1),\text{ for all }1\leq h \leq m.
\end{eqnarray*}
Since 
$\det(Q)\cdot\prod_{s=1}^{h}(x_{v^{l_s}}+1)=m_\mathcal{I,I'}-\det(J(-1,{\bf a'};Q,{\bf b'}))\cdot\sum_{s=1}^{h}\left(\prod_{t\neq s}(x_{v^{l_t}}+1)\right) 
\in \widetilde{P}_{h}^{k}(v)\cdot I_{j-h}(G,X)|_{x_v=-1}$ we get the result.


\section{Acknowledgement}

The authors would like to thank the anonymous referee for his helpful comments.
	

\bibliographystyle{abbrv}
\bibliography{Biblio}

\end{document}